\newtheorem{theorem}{Theorem}[section]
\newtheorem{lemma}[theorem]{Lemma}
\newtheorem{proposition}[theorem]{Proposition}
\newtheorem{corollary}[theorem]{Corollary}
\long\def\@savemarbox#1#2{\global\setbox#1\vtop{\hsize\marginparwidth 
  \@parboxrestore\tiny\raggedright #2}}
\newcommand{\Real}{\mathbb R}
\newcommand{\SL}{\mathsf{SL}(d,\Real)}
\def\eproof{$\Box$ \medskip}
\title[Topological restrictions on Anosov representations]{Topological restrictions on Anosov representations}
\author[Canary]{Richard Canary}
\address{University of Michigan, Ann Arbor, MI 41809 USA}
\author[Tsouvalas]{Konstantinos Tsouvalas}
\address{University of Michigan, Ann Arbor, MI 41809 USA}
\thanks{
Canary and Tsouvalas were partially supported by the grant DMS-1564362, from the National Science Foundation.}
\begin{document}
\begin{abstract}
We characterize groups admitting Anosov representations into $\mathsf{SL}(3,\mathbb R)$, projective Anosov
representations into $\mathsf{SL}(4,\mathbb R)$, and Borel Anosov representations into $\mathsf{SL}(4,\mathbb R)$.
More generally, we obtain bounds on the cohomological dimension of groups admitting $P_k$-Anosov representations into
$\mathsf{SL}(d,\mathbb R)$ and offer several characterizations of Benoist representations.
\end{abstract}

\maketitle

\section{Introduction}

In this paper, we investigate topological restrictions on hyperbolic groups admitting Anosov representations into
$\mathsf{SL}(d,\mathbb R)$. Anosov representations were introduced by
Labourie \cite{labourie-invent} in his work on Hitchin representations and, after 
further development, by Guichard-Wienhard \cite{guichard-wienhard}, Gu\'eritaud-Guichard-Kassel-Wienhard \cite{GGKW},
Kapovich-Leeb-Porti \cite{KLP1,KLP2} and others,
are widely recognized as the natural higher rank analogue of convex cocompact representations into rank one Lie groups.
Danciger-Gu\'eritaud-Kassel \cite{DGK0,DGK} and Zimmer \cite{zimmer} have recently shown that Anosov representations
may often be viewed as convex cocompact actions on convex domains in projective spaces.

Our study was motivated by the fact that most torsion-free hyperbolic groups which are known to admit Anosov representations are
isomorphic to convex cocompact subgroups of rank one Lie groups. In fact, the only known Anosov representations 
whose domain groups are  not isomorphic to convex
cocompact subgroups of rank one Lie groups are due to Benoist \cite{benoist-examples} and Kapovich \cite{kapovich} and, more recently, 
to Danciger, Gu\'eritaud, Kassel, Lee and Marquis \cite{DGK0,DGK2,lee-marquis}.
Kapovich constructs examples in $\mathsf{SL}(d,\mathbb R)$ for all $d\ge 5$.
We show that there are no ``new'' examples of projective Anosov representations  in dimension 3 or 4.

\begin{theorem}
\label{sl3rcase}
If $\Gamma$ is a torsion-free hyperbolic group and $\rho:\Gamma\to  \mathsf{SL}(3,\mathbb R)$ is an Anosov representation, 
then $\Gamma$  is either a free group or a surface group.
\end{theorem}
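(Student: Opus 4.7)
The plan is to exploit the projective structure coming from the Anosov limit maps to embed $\partial_\infty\Gamma$ into $S^2$, bound the cohomological dimension of $\Gamma$ via Bestvina--Mess duality, and then use a convex cocompactness argument to identify $\Gamma$ as the fundamental group of a compact surface.

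First, since $\mathsf{SL}(3,\mathbb{R})$ has, up to the duality interchanging them, only the proper parabolic classes $P_1$ and $P_2$, I may assume after replacing $\rho$ by its contragredient that $\rho$ is $P_1$-Anosov. This furnishes equivariant continuous maps $\xi:\partial_\infty\Gamma\to\mathbb{RP}^2$ and $\xi^*:\partial_\infty\Gamma\to(\mathbb{RP}^2)^*$, where $\xi$ is a topological embedding and the pair is transverse in the sense that $\xi^*(y)\cap\xi(\partial_\infty\Gamma)=\{\xi(y)\}$ for every $y$. Fixing any $y_0\in\partial_\infty\Gamma$, transversality places $\xi(\partial_\infty\Gamma\setminus\{y_0\})$ inside the affine chart $\mathbb{R}^2=\mathbb{RP}^2\setminus\xi^*(y_0)$, and continuity of $\xi$ at $y_0$ extends this to a topological embedding $\iota:\partial_\infty\Gamma\hookrightarrow S^2$ with $\iota(y_0)=\infty$ in the one-point compactification $S^2=\mathbb{R}^2\cup\{\infty\}$.

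Alexander duality on $S^2$ then gives $\check{H}^i(\partial_\infty\Gamma;\mathbb{Z})=0$ for all $i\geq 2$, and combining with the Bestvina--Mess duality $H^{i+1}(\Gamma;\mathbb{Z}\Gamma)\cong\check{H}^i(\partial_\infty\Gamma;\mathbb{Z})$ yields $\operatorname{cd}_\mathbb{Z}(\Gamma)\leq 2$. When $\operatorname{cd}(\Gamma)\leq 1$, the Stallings--Swan theorem forces $\Gamma$ to be free, and we are done. Otherwise $\operatorname{cd}(\Gamma)=2$, $\Gamma$ is one-ended, and I would invoke the convex cocompactness of $P_1$-Anosov representations into $\mathsf{SL}(3,\mathbb{R})$, as developed by Danciger--Gu\'eritaud--Kassel and Zimmer, to produce a $\rho(\Gamma)$-invariant properly convex open domain $\Omega\subset\mathbb{RP}^2$ together with a closed $\rho(\Gamma)$-invariant convex subset $C\subseteq\Omega$ of nonempty $2$-dimensional interior on which $\rho(\Gamma)$ acts properly discontinuously and cocompactly. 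Torsion-freeness of $\Gamma$ renders the action free, so $C/\rho(\Gamma)$ is a compact aspherical topological $2$-manifold, possibly with boundary; hence $\Gamma\cong\pi_1(C/\rho(\Gamma))$ is the fundamental group of a compact surface, and hyperbolicity excludes $\mathbb{Z}^2$ and Klein bottle subgroups, leaving only free groups and closed surface groups of negative Euler characteristic.

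The principal obstacle is the convex cocompactness step in the case $\operatorname{cd}(\Gamma)=2$. A general $P_1$-Anosov representation into $\mathsf{SL}(d,\mathbb{R})$ need not preserve a properly convex open domain in $\mathbb{RP}^{d-1}$, but in dimension three the planar transversality between $\xi$ and $\xi^*$ is strong enough to supply one---for instance as a suitable connected component of $\mathbb{RP}^2\setminus\bigcup_{y}\xi^*(y)$, or via the Danciger--Gu\'eritaud--Kassel and Zimmer dictionary---after which the convex core of $\rho(\Gamma)$ furnishes the required compact quotient. Verifying this, and ruling out pathological 1-dimensional planar continua (such as Sierpinski carpets) as possible images of $\xi$, is where the argument is most delicate.
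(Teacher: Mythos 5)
Your reduction to the $P_1$-Anosov case, the embedding of $\partial_\infty\Gamma\setminus\{y_0\}$ into an affine chart, and the resulting bound $\operatorname{cd}(\Gamma)\le 2$ are all sound (modulo the small point that Alexander duality as you state it requires ruling out $\xi(\partial_\infty\Gamma)=S^2$; this is easy, since $S^2$ cannot embed in $\mathbb{RP}^2$, but you should say it). The genuine gap is the sentence ``Otherwise $\operatorname{cd}(\Gamma)=2$, $\Gamma$ is one-ended.'' This is false: $\pi_1(S)\ast\mathbb{Z}$, or a free product of two surface groups, is torsion-free, hyperbolic, of cohomological dimension $2$, and infinitely-ended, and it is neither free nor a surface group. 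Excluding exactly these groups is the hardest part of the theorem, and your argument says nothing about them: the Danciger--Gu\'eritaud--Kassel/Zimmer convex cocompactness results you invoke explicitly require connected boundary (and irreducibility), so they do not apply, and there is no cocompact convex core to speak of when the limit set is disconnected. The paper handles this by first decomposing $\Gamma$ into one-ended free factors, showing each factor $\Gamma_1$ is a surface group (using the semisimplification to dispose of the reducible case, where the representation degenerates into $\mathsf{SL}^\pm(W)$ with $\dim W\le 2$, and Zimmer's theorem for the irreducible case), and then deriving a contradiction when $\Gamma_1$ has infinite index: $\xi^1(\partial\Gamma_1)$ lies in an affine chart and spans $\mathbb{R}^3$, so $\rho(\Gamma_1)$ acts cocompactly on $\Omega=\mathbb{P}(\mathbb{R}^3)-\bigcup_{x\in\partial\Gamma_1}\mathbb{P}(\xi^2(x))$ with $\partial\Omega=\xi^1(\partial\Gamma_1)$; for $z\in\partial\Gamma-\partial\Gamma_1$ the point $\xi^1(z)$ can then lie neither in the complement of $\Omega$ nor in $\Omega$ without violating transversality of the limit maps. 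Some argument of this kind is indispensable and is entirely absent from your proposal.

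Two smaller issues in the one-ended case: Zimmer's theorem requires the representation to be irreducible, so you must first treat reducible representations (in $\mathsf{SL}(3,\mathbb{R})$ this is quick, since the semisimplification restricted to the span of the limit set lands in $\mathsf{SL}^\pm(W)$ with $\dim W\le 2$, forcing $\Gamma$ to be free or a surface group); and the worry you raise at the end about Sierpinski-carpet images of $\xi$ is a red herring --- once one-endedness, irreducibility, and non-surface-ness are in hand, Zimmer's theorem already yields the convex cocompact action, and the quotient of the convex domain is an aspherical surface, which is the desired contradiction. The real delicacy is not the planar topology of $\xi(\partial_\infty\Gamma)$ but the multi-ended case.
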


We will say that a representation $\rho:\Gamma\to \mathsf{SL}(d,\mathbb R)$ is $P_k$-Anosov if $k\le \frac{d}{2}$ and
$\rho$ is Anosov with respect to the parabolic group which is the stabilizer of a $k$-plane in $\mathbb R^d$. We will
refer to $P_1$-Anosov representations as projective Anosov representations. 

\begin{theorem}
\label{sl4rcase} 
If $\Gamma$ is a torsion-free hyperbolic group and $\rho:\Gamma\to  \mathsf{SL}(4,\mathbb R)$ is a projective Anosov representation, then $\Gamma$ 
is isomorphic to a convex cocompact subgroup of $\mathsf{PO}(3,1)$. In particular,  $\Gamma$ is the
fundamental group of a compact hyperbolizable 3-manifold.
\end{theorem}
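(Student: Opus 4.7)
The plan is to leverage the projective Anosov structure to bound the topology of $\Gamma$ through the embedding of its Gromov boundary $\bg$ into $\mathbb{RP}^3$ by the limit map $\xi^1$, combined with a cohomological dimension bound.

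First, I would apply the cohomological dimension bounds announced in the abstract for groups admitting $P_k$-Anosov representations into $\mathsf{SL}(d,\mathbb R)$. For $d=4$ and $k=1$ these should give $\mathrm{cd}(\Gamma)\leq 3$, which via the Bestvina--Mess identification $\mathrm{cd}(\Gamma)=\dim \bg+1$ implies $\dim \bg\leq 2$. In particular $\xi^1$ embeds $\bg$ as a compact set of topological dimension at most $2$ inside the three-dimensional manifold $\mathbb{RP}^3$.

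Second, by the convex cocompactness theory of Zimmer and of Danciger--Gu\'eritaud--Kassel, the projective Anosov representation $\rho$ realizes $\Gamma$ as a convex cocompact group of projective transformations of some properly convex domain $\Omega\subset\mathbb{RP}^3$ whose ideal boundary contains $\xi^1(\bg)$. I would then split on the projective span of $\xi^1(\bg)$: if this span is a proper subspace of $\mathbb R^4$, then the span is $\rho(\Gamma)$-invariant and, after reducing to its stabilizer, $\rho$ factors through a projective Anosov representation into $\mathsf{SL}(3,\mathbb R)$; Theorem~\ref{sl3rcase} then forces $\Gamma$ to be free or a surface group, both of which embed as convex cocompact subgroups of $\mathsf{PO}(3,1)$. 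Otherwise $\Omega$ has nonempty interior in $\mathbb{RP}^3$, so $\partial\Omega$ is a topological $2$-sphere and $\xi^1$ embeds $\bg$ into $S^2\cong\partial\Omega$.

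The crux is the three-dimensional case. Here I would invoke Benoist's theorem that a word hyperbolic group acting cocompactly on the convex core of a properly convex domain forces strict convexity and $C^1$ regularity of $\partial\Omega$, so that the $\Gamma$-action on $\partial\Omega\cong S^2$ is a uniform convergence group action on a topological $2$-sphere. The main obstacle is then to promote this to an actual isomorphism of $\Gamma$ with a convex cocompact subgroup of $\mathsf{PO}(3,1)$ without invoking the full Cannon or Kapovich--Kleiner conjectures; one would aim to do this by exploiting the H\"older regularity and the dual limit map $\xi^3$ from the Anosov structure, together with rigidity of strictly convex divisible three-dimensional projective domains, to identify $\xi^1(\bg)$ (Cantor set, Jordan curve, Sierpinski carpet, or full $S^2$) with the limit set of a convex cocompact Kleinian group case by case.
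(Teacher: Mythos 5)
Your proposal has the right opening moves (reduce the proper-span case to $\mathsf{SL}(3,\mathbb R)$, use Zimmer and Danciger--Gu\'eritaud--Kassel to get a convex cocompact action on a properly convex $\Omega\subset\mathbb P(\mathbb R^4)$), but the crux step is a genuine gap, and you acknowledge as much. Once you have a uniform convergence action of $\Gamma$ on $\partial\Omega\cong S^2$ and try to identify the limit set ``case by case'' with that of a Kleinian group, you are facing exactly the Cannon conjecture (limit set all of $S^2$) and the Kapovich--Kleiner conjecture (Sierpinski carpet limit set), neither of which is known in general; H\"older regularity of the limit maps and rigidity of divisible convex sets do not supply a substitute argument. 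The way out is not to work on the boundary at all: the convex cocompact action gives a closed convex set $C\subset\Omega$ with $C/\rho(\Gamma)$ a compact aspherical \emph{$3$-manifold}, so $\Gamma$ is a $3$-manifold group. One then applies the Scott core theorem (when the quotient is open) and the Geometrization Theorem --- Perelman's version in the closed case, Thurston's Haken version in the bounded case --- to the manifold itself. For $3$-manifold groups the Cannon/Kapovich--Kleiner issues evaporate; that is the whole point of landing in dimension $3$, and it is the step your outline is missing.

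There are two secondary gaps. First, Zimmer's theorem requires $\partial\Gamma$ connected (and $\Gamma$ not a surface group, $\rho$ irreducible), so you cannot apply it to $\Gamma$ directly: you must first take the free product decomposition $\Gamma=\Gamma_1*\cdots*\Gamma_p*F_r$ into one-ended factors, run the argument on each quasiconvex factor $\Gamma_i$, and then reassemble (in the paper this is a boundary connected sum of Scott cores with a handlebody, plus a transversality argument showing that if some factor gives a \emph{closed} quotient then it is all of $\Gamma$). Second, when the span $W$ of $\xi^1(\partial\Gamma)$ is proper, the restriction of $\rho$ to $W$ is not automatically Anosov; you need to pass to the semisimplification (Corollary \ref{reducible}) before invoking Theorem \ref{sl3rcase}, and you must also handle $\dim W=1,2$.
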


Our most general result is that, with four explicit exceptions, if $\rho:\Gamma\to \mathsf{SL}(d,\mathbb R)$ is $P_k$-Anosov
then $\Gamma$ has  cohomological dimension at most $d-k$. We will see in the proof that each exception is related to one of
the four Hopf fibrations.

\begin{theorem}
\label{cohomdimd-k}
Suppose  $\Gamma$ is a torsion-free hyperbolic group and
$\rho:\Gamma\to  \mathsf{SL}(d,\mathbb R)$ is $P_k$-Anosov.
\begin{enumerate}
\item
If $(d,k)$ is not  $(2,1)$, $(4,2)$, $(8,4)$ or $(16,8)$, 
then $\Gamma$ has cohomological dimension at most $d-k$.
\item
If $(d,k)$ is $(2,1)$, $(4,2)$, $(8,4)$ or $(16,8)$, then $\Gamma$ has cohomological dimension at most $d-k+1$. 
Moreover, if $\Gamma$ has cohomological dimension $d-k+1$,
then $\partial \Gamma$ is homeomorphic to $S^{d-k}$ and, if $k\ne 1$, $\rho$ is not  projective Anosov.
\end{enumerate}
\end{theorem}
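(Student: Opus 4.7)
The argument has three parts: a reduction via Bestvina--Mess, a geometric embedding step, and a topological-obstruction step in the equality case.

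By the Bestvina--Mess theorem, for a torsion-free hyperbolic group $\Gamma$ one has $\mathrm{cd}(\Gamma)=\dim(\partial\Gamma)+1$, where $\dim$ denotes topological (covering) dimension. Thus it suffices to show that $\dim(\partial\Gamma)\le d-k-1$ in the generic case, and that in the four exceptional cases $\dim(\partial\Gamma)\le d-k$, with equality forcing $\partial\Gamma\cong S^{d-k}$.

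The geometric heart of the proof is an equivariant topological embedding $\partial\Gamma\hookrightarrow S^{d-k}$. The $P_k$-Anosov boundary maps $\xi:\partial\Gamma\to\mathrm{Gr}_k(\mathbb R^d)$ and $\eta:\partial\Gamma\to\mathrm{Gr}_{d-k}(\mathbb R^d)$ satisfy $\xi(y)\oplus\eta(y')=\mathbb R^d$ whenever $y\ne y'$. Fixing $x_0\in\partial\Gamma$, each $\xi(y)$ with $y\ne x_0$ is the graph of a unique $L_y\in\mathrm{Hom}(\xi(x_0),\eta(x_0))\cong\mathbb R^{k(d-k)}$, and evaluating at a chosen $v_0\in\xi(x_0)\setminus\{0\}$ gives the candidate map $\Phi:\partial\Gamma\setminus\{x_0\}\to\eta(x_0)\cong\mathbb R^{d-k}$, $y\mapsto L_y(v_0)$. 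The key technical step, and the main obstacle, is injectivity of $\Phi$ for $k\ge 2$: since $\Phi(y_1)=\Phi(y_2)$ forces a common nonzero vector in $\xi(y_1)\cap\xi(y_2)$, it reduces to the ``general position'' statement $\xi(y_1)\cap\xi(y_2)=0$ for $y_1\ne y_2$, which one extracts from the Anosov contraction dynamics of $\rho$ (for instance by passing to the projective Anosov representation $\wedge^k\rho$ in $\mathsf{SL}(\Lambda^k\mathbb R^d)$ and tracking the consequences of its transversality). Once $\Phi$ is shown to be an embedding, one-point compactification at $x_0$ produces $\partial\Gamma\hookrightarrow S^{d-k}$, and invariance of domain gives $\dim(\partial\Gamma)\le d-k$.

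In the equality case $\dim(\partial\Gamma)=d-k$, the image $\xi(\partial\Gamma)$ is open in $S^{d-k}$, and minimality of the $\Gamma$-action with compactness of $\partial\Gamma$ forces $\partial\Gamma\cong S^{d-k}$. Then $\xi$ produces a splitting of the trivial rank-$d$ bundle $\underline{\mathbb R^d}$ over $S^{d-k}$ as $E\oplus F$ with $E=\xi^{*}\tau_k$ of rank $k$ and $F=\eta^{*}\tau_{d-k}$ of rank $d-k$. By the Bott--Milnor--Kervaire theorem and Adams' resolution of the Hopf-invariant-one problem, such a splitting of the trivial bundle over $S^{d-k}$ into a rank-$k$ and a rank-$(d-k)$ subbundle occurs only in the critical range $d=2k$ with $k\in\{1,2,4,8\}$, realized by the Hopf bundles over $\mathbb KP^1\cong S^k$ for $\mathbb K\in\{\mathbb R,\mathbb C,\mathbb H,\mathbb O\}$. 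For all other $(d,k)$ the obstruction rules out equality, yielding $\dim(\partial\Gamma)\le d-k-1$ and proving part (1). In the exceptional cases of (2), equality is realized by the Hopf models; the final clause that $\rho$ cannot also be projective Anosov when $k\ne 1$ follows by comparing the Hopf structure on $\xi(\partial\Gamma)$ with the rigidity provided by Theorem~\ref{sl4rcase} (for $(d,k)=(4,2)$) and its higher-dimensional analogues.
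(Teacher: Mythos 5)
Your overall architecture matches the paper's: Bestvina--Mess reduces everything to the topological dimension of $\partial\Gamma$, a transversality argument embeds $\partial\Gamma\setminus\{x_0\}$ in a $(d-k)$-manifold, and the equality case is ruled out by a Hopf-fibration obstruction. However, two of your steps are genuinely broken.

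A repairable issue first: your graph construction is ill-posed, because $\xi(x_0)=\xi^k(x_0)$ is \emph{contained in} $\eta(x_0)=\xi^{d-k}(x_0)$ (the standard nesting $\xi^k(x)\subset\xi^{d-k}(x)$ for $k\le d/2$), so $\xi(x_0)\oplus\eta(x_0)$ is not a decomposition of $\mathbb{R}^d$ and $\xi(y)$ is not the graph of any map $\xi(x_0)\to\eta(x_0)$. Replace $\xi(x_0)$ by a $k$-plane $W_0$ complementary to $\eta(x_0)$: then $L_y\in\mathrm{Hom}(W_0,\eta(x_0))$ is defined, injectivity of $\Phi$ follows from $\xi(y_1)\cap\xi(y_2)=\{0\}$ exactly as you say --- and that fact is a one-line consequence of $\xi^k(y_1)\cap\xi^{d-k}(y_2)=\{0\}$ together with the nesting, not a serious obstacle requiring $\wedge^k\rho$ --- and properness at $x_0$ holds because a bounded accumulation point of $v_0+L_{y_n}(v_0)$ would be a nonzero vector of $\xi(x_0)$ outside $\eta(x_0)$. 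The paper's version of this map intersects $\xi^k(y)$ with a fixed $(d-k+1)$-plane containing $\xi^{d-k}(x_0)$; the two are essentially equivalent.

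The serious gap is the topological obstruction in the equality case. The statement you invoke is false: the trivial rank-$d$ bundle over $S^{d-k}$ splits as a rank-$k$ plus a rank-$(d-k)$ subbundle for \emph{every} $(d,k)$ (take a constant $k$-plane and its orthogonal complement), so no Bott--Milnor--Kervaire/Adams argument can restrict $(d,k)$ at that level; moreover your proposed complement $F=\eta^{*}\tau_{d-k}$ is not even complementary to $E$, again because $\xi(x)\subset\eta(x)$. What actually forces the four exceptional pairs is that the unit spheres $S(\xi^k(x))$ are pairwise disjoint, so $E=\bigcup_{x}S(\xi^k(x))$ is the total space of an $S^{k-1}$-bundle over $\partial\Gamma\cong S^{d-k}$ sitting inside $S^{d-1}$; it is a closed $(d-1)$-manifold, hence all of $S^{d-1}$ by invariance of domain, and Adams' Hopf-invariant-one theorem then classifies fibrations $S^{k-1}\to S^{d-1}\to S^{d-k}$, leaving only $(d,k)\in\{(2,1),(4,2),(8,4),(16,8)\}$. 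Without the observation that the fibers fill the whole unit sphere, part (1) does not follow from your argument. Finally, the clause that $\rho$ cannot also be projective Anosov when $k\ne 1$ is not established by appealing to Theorem~\ref{sl4rcase} and unproved ``higher-dimensional analogues''; the correct argument is that $\xi^1_\rho(x)\subset\xi^k_\rho(x)$ would produce a section of the fibration $S^{d-1}\to S^{d-k}$, which is impossible since the projection induces the zero map on $H_{d-k}$ while a section would split it.
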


Benoist representations are one of the richest classes of examples of Anosov representations,
see, for example \cite{benoist-divisible1,benoist-survey}.
We recall that \hbox{$\rho:\Gamma\to  \mathsf{SL}(d,\mathbb R)$}  is a {\em Benoist representation}, 
if $\rho$ has finite kernel and $\rho(\Gamma)$ preserves and acts properly discontinuously and
cocompactly on a strictly convex domain in $\mathbb{P}(\mathbb R^d)$. Notice that, almost by definition,
$\rho(\Gamma)$ must have virtual cohomological dimension $d-1$ and recall that Benoist 
representations are projective Anosov (see \cite[Prop 6.1]{guichard-wienhard}).
Theorem \ref{cohomdimd-k} implies that they are {\em only} projective Anosov.

\begin{corollary}
\label{benoistnotpk}
If $\rho:\Gamma\to \mathsf{SL}(d,\mathbb R)$ is a Benoist representation and
$\frac{d}{2}\ge k\ge 2$, then $\rho$ is not $P_k$-Anosov. 
\end{corollary}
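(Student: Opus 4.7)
The plan is to combine Theorem \ref{cohomdimd-k} with the observation, already recorded in the excerpt, that a Benoist representation forces $\operatorname{vcd}(\Gamma)=d-1$, and then to eliminate the surviving cases by pigeonhole on the bounds in Theorem \ref{cohomdimd-k}. First, since $\rho$ has finite kernel and $\rho(\Gamma)$ is linear, I would invoke Selberg's lemma to produce a torsion-free finite-index subgroup $\Gamma_0\leq\Gamma$ with $\operatorname{cd}(\Gamma_0)=\operatorname{vcd}(\Gamma)=d-1$. The restriction $\rho|_{\Gamma_0}$ remains a Benoist representation (it still preserves, and acts cocompactly on, the same strictly convex domain $\Omega\subset\mathbb{P}(\mathbb{R}^d)$), and hence is projective Anosov; moreover it is $P_k$-Anosov whenever $\rho$ is.

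Now suppose for contradiction that $\rho$ is $P_k$-Anosov for some $2\leq k\leq d/2$, and apply Theorem \ref{cohomdimd-k} to $\rho|_{\Gamma_0}$. In the generic case (1), I would read off $d-1=\operatorname{cd}(\Gamma_0)\leq d-k$, forcing $k\leq 1$ and contradicting $k\geq 2$. So $(d,k)$ must be one of the four exceptional pairs in clause (2); since $k\geq 2$ rules out $(2,1)$, the only possibilities are $(d,k)\in\{(4,2),(8,4),(16,8)\}$. The exceptional bound $\operatorname{cd}(\Gamma_0)\leq d-k+1$ then gives $d-1\leq d-k+1$, i.e.\ $k\leq 2$, which isolates $(d,k)=(4,2)$.

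In this last case $\operatorname{cd}(\Gamma_0)=d-k+1=3$ is precisely the maximal value allowed by Theorem \ref{cohomdimd-k}(2), so the additional conclusion of that clause applies: since $k=2\neq 1$, it asserts that $\rho|_{\Gamma_0}$ is not projective Anosov, contradicting the Benoist hypothesis. The argument is essentially a finite sifting of the cases opened by Theorem \ref{cohomdimd-k}, so the only place where any care is needed is the last step, where one must exploit the strict equality $\operatorname{cd}(\Gamma_0)=d-1$ (rather than merely an inequality) in order to activate the "moreover" clause of Theorem \ref{cohomdimd-k}(2); the fact that $\operatorname{vcd}(\Gamma)=d-1$ holds "almost by definition" for Benoist representations, as noted in the excerpt, is exactly what makes this available.
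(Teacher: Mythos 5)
Your argument is correct and follows exactly the route the paper takes: the paper's proof of Corollary \ref{benoistnotpk} is the single observation that a Benoist representation forces virtual cohomological dimension $d-1$, after which Theorem \ref{cohomdimd-k} "immediately" rules out $P_k$-Anosov for $k\ge 2$. You have simply made explicit the case-sifting (the generic bound $d-1\le d-k$, the elimination of $(8,4)$ and $(16,8)$ by the bound $d-k+1$, and the use of the "moreover" clause together with projective Anosovness of Benoist representations to kill $(4,2)$) and the reduction to a torsion-free finite-index subgroup that the paper leaves implicit.
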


Conversely, we will see that, when $d\ge 4$, Benoist representations are characterized, among Anosov
representations, entirely by the cohomological dimension of their domain group.
 Notice that the Anosov representations of surface groups into $\mathsf{SL}(3,\mathbb R)$
studied by Barbot \cite{barbot} are counterexamples to 
the statement of Theorem \ref{cohomdimd} when $d=3$.

\begin{theorem}
\label{cohomdimd}
If $d\ge 4$, an Anosov representation $\rho:\Gamma\to\mathsf{SL}(d,\mathbb R)$ of a torsion-free hyperbolic group $\Gamma$
is a Benoist representation if and only if 
$\Gamma$ has cohomological dimension $d-1$.
\end{theorem}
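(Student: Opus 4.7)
The plan is to handle the two implications separately. For the forward direction, if $\rho$ is Benoist then, since $\Gamma$ is torsion-free and $\rho$ has finite kernel, $\rho$ is injective and $\rho(\Gamma)$ acts freely, properly discontinuously, and cocompactly on a contractible properly convex open set $\Omega \subset \mathbb{P}(\mathbb{R}^d)$ of dimension $d-1$; the quotient is then a closed aspherical $(d-1)$-manifold, so $\operatorname{cd}(\Gamma) = d-1$.

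For the reverse direction, suppose $\rho: \Gamma \to \mathsf{SL}(d,\mathbb R)$ is $P_k$-Anosov with $\operatorname{cd}(\Gamma) = d-1$. The first step is to reduce to the projective Anosov case via Theorem \ref{cohomdimd-k}. The generic bound $\operatorname{cd}(\Gamma) \leq d-k$ combined with $\operatorname{cd}(\Gamma) = d-1$ forces $k=1$; among the four exceptional pairs, only $(d,k)=(4,2)$ is compatible with $d\geq 4$ and $\operatorname{cd}=d-1$, but part (2) of the theorem then asserts that $\rho$ is not projective Anosov, hence not Benoist, so this case does not produce any Benoist example and may be excluded. Assuming $\rho$ is projective Anosov, I next argue that the limit set $\Lambda = \xi(\bg)$ of the Anosov boundary map spans $\mathbb R^d$: otherwise $\Lambda \subset \mathbb{P}(V)$ for some proper linear subspace $V$, and the restriction of $\rho$ to $V$ is a projective Anosov representation into $\mathsf{SL}(V)$, to which Theorem \ref{cohomdimd-k} yields $\operatorname{cd}(\Gamma) \leq \dim V - 1 \leq d-2$, contradicting $\operatorname{cd}(\Gamma) = d-1$.

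Having established that $\Lambda$ spans, I let $\Omega$ be the interior of the projective convex hull of $\Lambda$; this is a properly convex, $(d-1)$-dimensional open set preserved by $\rho(\Gamma)$. The convex cocompactness theory of Danciger-Gu\'eritaud-Kassel \cite{DGK0,DGK} and Zimmer \cite{zimmer} supplies a cocompact action of $\rho(\Gamma)$ on $\Omega$, since the convex core of a projective Anosov representation is the interior of the convex hull of its limit set once the limit set spans. Finally, Benoist's theorem in \cite{benoist-divisible1}, characterizing strictly convex divisible domains by word hyperbolicity of the dividing group, upgrades $\Omega$ to a strictly convex domain; since $\Gamma$ is hyperbolic, $\rho$ is therefore a Benoist representation. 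The main obstacle will be the careful identification of the abstract convex cocompact domain supplied by the DGK/Zimmer theory with the concrete interior-of-convex-hull construction, and in particular verifying that no additional flat pieces appear on the boundary of $\Omega$ outside of $\Lambda$; this is where the projective Anosov structure combines with the word hyperbolicity of $\Gamma$ to force the correct geometry.
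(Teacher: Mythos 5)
Your forward direction and the overall skeleton of the reverse direction (reduce to the projective Anosov case, show the limit set spans, produce an invariant properly convex domain, invoke Benoist's strict-convexity theorem) agree with the paper, but the proposal has a genuine gap at its central step. You assert that the DGK/Zimmer theory ``supplies a cocompact action of $\rho(\Gamma)$ on $\Omega$ \dots once the limit set spans.'' This is false: what the theory supplies (Theorem \ref{Zimmer} together with Proposition \ref{Omegastructure}) is a \emph{convex cocompact} action, i.e.\ a cocompact action on a closed convex subset of $\Omega$, not on $\Omega$ itself. A convex cocompact, non-cocompact subgroup of $\mathsf{SO}(d-1,1)\subset\mathsf{SL}(d,\mathbb R)$ whose limit set spans $\mathbb R^d$ already shows that a spanning limit set does not force the quotient of the domain to be compact. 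The step that actually makes $\rho$ a Benoist representation is the one place where the hypothesis $\mathrm{cd}(\Gamma)=d-1$ does real work beyond bookkeeping: $\Omega/\rho(\Gamma)$ is an aspherical $(d-1)$-manifold, and an \emph{open} aspherical $(d-1)$-manifold has fundamental group of cohomological dimension at most $d-2$, so $\mathrm{cd}(\Gamma)=d-1$ forces $\Omega/\rho(\Gamma)$ to be closed; only then does Benoist's theorem \cite[Thm 1.1]{benoist-divisible1} yield strict convexity. This argument is absent from your proposal: you use the cohomological hypothesis only to rule out small $k$ and non-spanning limit sets, never to force compactness of the quotient.

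There are three further issues. First, Zimmer's theorem requires $\partial\Gamma$ connected and $\rho$ irreducible, and $\mathrm{cd}(\Gamma)=d-1$ is perfectly consistent with $\Gamma$ being a nontrivial free product; the paper first passes to a one-ended free factor $\Gamma_1$ of cohomological dimension $d-1$, proves $\rho|_{\Gamma_1}$ is Benoist, and then uses transversality of the limit maps to show that no other free factor can exist, so that $\Gamma=\Gamma_1$. Your proposal never addresses one-endedness or the possibility of extra factors. Second, ``the limit set spans'' is weaker than ``irreducible'' for non-semisimple representations; the paper rules out reducibility via the semisimplification (Corollary \ref{reducible}), restricting to the span of the limit set of $\rho^{ss}$ and deriving the same cohomological contradiction, and irreducibility is a genuine hypothesis of Theorem \ref{Zimmer}. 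Third, your dismissal of the exceptional pair $(d,k)=(4,2)$ is logically backwards: since the implication being proven is that $\mathrm{cd}(\Gamma)=d-1$ forces $\rho$ to be Benoist, an Anosov representation falling into that case would be a potential counterexample, not an irrelevance that ``may be excluded'' --- indeed the inclusion of a cocompact lattice of $\mathsf{SL}(2,\mathbb C)$ into $\mathsf{SL}(4,\mathbb R)$, which the paper itself notes is $P_2$-Anosov with $\mathrm{cd}=3$ but is not projective Anosov and hence not Benoist, shows this case cannot be waved away, and the reduction to the projective Anosov case must be treated with more care than either your proposal or a literal reading of the paper's one-line assertion provides.
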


Labourie \cite{labourie-invent} showed that Hitchin representations are irreducible and Borel Anosov, i.e. are $P_k$- Anosov for all $k$.
Andres Sambarino asked whether any torsion-free Borel Anosov subgroup of $\mathsf{SL}(d,\mathbb R)$ 
is either free or a surface group. Theorem \ref{sl3rcase} settles this question in the affirmative when $d=3$.
Here, we answer Sambarino's question when $d=4$. We know of no counterexamples in any dimension.

\begin{theorem}
\label{borel4}
If  $\Gamma$ is a torsion-free hyperbolic group and $\rho:\Gamma\to \mathsf{SL}(4,\mathbb R)$ is Borel Anosov,
then $\Gamma$ is either a surface group or a free group.
\end{theorem}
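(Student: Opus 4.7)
The plan is to first invoke the earlier theorems of the paper to push $\Gamma$ into the world of convex cocompact Kleinian groups of low cohomological dimension, and then to eliminate the remaining topological possibility for $\partial_\infty\Gamma$ that is neither a Cantor set nor a circle.

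First, since Borel Anosov means $P_k$-Anosov for every $1\le k\le d/2$, the representation $\rho$ is in particular projective Anosov. By Theorem \ref{sl4rcase}, $\Gamma$ is isomorphic to a convex cocompact subgroup of $\mathsf{PO}(3,1)$, and hence $\Gamma\cong\pi_1(M)$ for some compact hyperbolizable $3$-manifold $M$. Next, $\rho$ is $P_2$-Anosov, and the pair $(d,k)=(4,2)$ is one of the four exceptional pairs in Theorem \ref{cohomdimd-k}. That theorem gives $\mathrm{cd}(\Gamma)\le 3$, and if equality holds then $\partial_\infty\Gamma\cong S^2$ and $\rho$ fails to be projective Anosov. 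Since Borel Anosov does imply projective Anosov, this forces $\mathrm{cd}(\Gamma)\le 2$, so the $3$-manifold $M$ must have non-empty boundary.

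Now I would split on the topology of $\partial_\infty\Gamma$. If $\partial_\infty\Gamma$ is disconnected, Stallings' ends theorem (together with accessibility for torsion-free hyperbolic groups) yields that $\Gamma$ is free. If $\partial_\infty\Gamma$ is connected, the Kapovich–Kleiner trichotomy for boundaries of hyperbolic groups leaves three possibilities: circle, Sierpi\'nski carpet, or Menger curve. The Menger curve is excluded because $\Gamma$ is Kleinian and its limit set embeds in $S^2$. If $\partial_\infty\Gamma\cong S^1$, the Tukia / Gabai / Casson–Jungreis convergence group theorem yields that $\Gamma$ is a cocompact Fuchsian group, hence a surface group.

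The main obstacle is thus to rule out the case where $\partial_\infty\Gamma$ is a Sierpi\'nski carpet. In this case $M$ has incompressible boundary and is not an $I$-bundle, and the peripheral circles of the carpet are exactly the limit sets in $\partial_\infty\Gamma$ of the fundamental groups $H_i=\pi_1(\Sigma_i)$ of the boundary components, which are quasi-convex in $\Gamma$. My plan here is to exploit the fact that quasi-convex inclusion preserves the Anosov property: each restriction $\rho|_{H_i}$ is a Borel Anosov representation of a closed surface group into $\mathsf{SL}(4,\mathbb{R})$, and the limit maps $\xi^k|_{\partial_\infty H_i}$ for $k=1,2,3$ glue, across the dense family of peripheral subgroups, into the global transverse flag map of $\rho$. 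I would then use this compatibility, together with the transversality built into the flag map $\xi\colon\partial_\infty\Gamma\to\mathcal F(\mathbb R^4)$ and the local product structure coming from $P_2$-Anosov (near any $x$, the map $\xi^2$ sends $\partial_\infty\Gamma\setminus\{x\}$ injectively into an affine chart of $\mathrm{Gr}_2(\mathbb R^4)$ transverse to $\xi^2(x)$), to produce a contradiction with the presence of infinitely many distinct peripheral circles filling out a carpet. Closing off the Sierpi\'nski carpet case in this way is the delicate step and the one that requires real work; the first three paragraphs are essentially bookkeeping on top of Theorems \ref{sl4rcase} and \ref{cohomdimd-k}.
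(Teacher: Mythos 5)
Your first two paragraphs agree with the paper's opening moves: Theorem \ref{sl4rcase} places $\Gamma$ as a convex cocompact subgroup of $\mathsf{PO}(3,1)$, and the $(4,2)$ exceptional case of Theorem \ref{cohomdimd-k}, combined with the fact that Borel Anosov implies projective Anosov, forces the cohomological dimension of $\Gamma$ to be at most $2$. After that the proposal has two problems. First, the case analysis is not exhaustive: a disconnected boundary only gives a free product decomposition of $\Gamma$ (whose factors may be surface groups), not freeness; and the Kapovich--Kleiner trichotomy (circle, Sierpi\'nski carpet, Menger curve) requires that $\Gamma$ not split over a virtually cyclic subgroup, whereas convex cocompact Kleinian groups of cohomological dimension $2$ frequently do split over cyclic subgroups (e.g.\ two $I$-bundles glued along an annulus), producing connected boundaries with local cut points that are none of the three model spaces. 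The paper sidesteps all of this by quoting Abikoff--Maskit: if $\Gamma$ is neither free nor a surface group (and, having cohomological dimension at most $2$, is not cocompact), then it contains quasiconvex surface subgroups $H$ and $J$ with disjoint boundaries in $\partial\Gamma$; no trichotomy is needed.

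Second, and more seriously, the step you yourself flag as ``the delicate one that requires real work'' is precisely the content of the paper's proof, and your proposal does not carry it out --- it only announces the intention to derive a contradiction from transversality and the local product structure near a boundary point. The paper's actual argument is a concrete piece of $3$-manifold topology inside $\mathbb P(\mathbb R^4)$: by Proposition \ref{Borel4irreducible} and Corollary \ref{Omegasurface}, $\rho(H)$ acts convex cocompactly on a properly convex $\Omega_H$ with $\xi^1_\rho(\partial H)\subset\partial\Omega_H$; the set $T_H=\bigcup_{x\in\partial H}\mathbb P(\xi^2_\rho(x))$ is an embedded $S^1$-bundle over a circle, hence a torus (the Klein bottle does not embed in $\mathbb P(\mathbb R^4)$ by Bredon--Wood), which is shown to bound a solid torus $V_H$ containing $\Omega_H$; repeating the construction for $J$ and proving $V_J\subset V_H$ forces $\xi^1_\rho(\partial H)\subset\bigcup_{x\in\partial J}\mathbb P(\xi^3_\rho(x))$, contradicting transversality of the limit maps. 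Without an argument of this kind, or a genuine substitute for it, what you have is a plan rather than a proof.
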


Hitchin representations are the only known Borel Anosov representations of surface groups when $d$ is even.
Hitchin representation are irreducible (see \cite[Lemma 10.1]{labourie-invent}), but
whenever $d$ is odd one may use Barbot's construction to produce reducible Borel Anosov representations of surface groups
into $\mathsf{SL}(d,\mathbb R)$.
In Proposition \ref{Borel4irreducible} we show that every representation of a surface group into $\mathsf{SL}(4,\mathbb R)$ is
irreducible.  One might hope that all Borel Anosov representations of surface groups into $\mathsf{SL}(4,\mathbb R)$
are Hitchin.
In Proposition \ref{hitchindontextend} we show that the restriction of a Borel Anosov representation to an infinite index surface
subgroup cannot be Hitchin, so this would provide another proof of Theorem \ref{borel4}.

We also extend Theorem \ref{cohomdimd} to replace the assumption that $\rho$ is Anosov with
the simpler assumption that $\rho$ admits an non-constant 
limit map into $\mathbb P(\mathbb R^d)$. 

\begin{theorem}
\label{benoistlimitmap}
Suppose that $d\ge 4$ and $\Gamma$ is a torsion-free hyperbolic group. 
A representation $\rho:\Gamma \rightarrow \mathsf{SL}(d, \mathbb{R})$  is a Benoist representation
if and only if $\Gamma$ has  cohomological dimension $d-1$ and  there is a $\rho$-equivariant continuous non-constant map 
$\xi:\partial\Gamma \rightarrow \mathbb{P}(\mathbb{R}^{d})$.
\end{theorem}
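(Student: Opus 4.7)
The forward direction is essentially by definition. If $\rho$ is Benoist, then $\rho(\Gamma)$ acts properly discontinuously and cocompactly on a strictly convex open set $\Omega \subset \mathbb{P}(\mathbb{R}^d)$, so $\rho(\Gamma) \backslash \Omega$ is a closed aspherical $(d-1)$-manifold and $\mathrm{cd}(\Gamma) = d-1$; the Benoist boundary identification $\partial\Gamma \to \partial\Omega \hookrightarrow \mathbb{P}(\mathbb{R}^d)$ is a continuous $\rho$-equivariant injection, in particular non-constant.

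For the converse, assume $\mathrm{cd}(\Gamma) = d-1 \geq 3$ and fix a continuous $\rho$-equivariant non-constant map $\xi: \partial\Gamma \to \mathbb{P}(\mathbb{R}^d)$. The plan is to show that $\rho$ is projective Anosov, so that Theorem \ref{cohomdimd} yields that $\rho$ is Benoist. First, $\xi(\partial \Gamma)$ must be infinite: if it were finite, the finite-index subgroup $\Gamma_0 \leq \Gamma$ pointwise fixing it would satisfy, for any infinite-order $\gamma \in \Gamma_0$ and any $x \in \partial\Gamma$, the equality $\xi(x) = \rho(\gamma^n)\xi(x) \to \xi(\gamma^+)$, collapsing $\xi$ to a constant. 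Minimality of $\Gamma \curvearrowright \partial\Gamma$ then makes $\xi(\partial\Gamma)$ a $\rho(\Gamma)$-minimal perfect subset of $\mathbb{P}(\mathbb{R}^d)$ with no global fixed point.

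Next I would establish two key facts: (i) $\xi(\partial\Gamma)$ spans $\mathbb{R}^d$, and (ii) $\xi$ is injective. For (i), a proper $\rho$-invariant span $V \subsetneq \mathbb{R}^d$ would yield, after rescaling to $\mathsf{SL}(\dim V, \mathbb{R})$, a continuous equivariant non-constant limit map $\partial\Gamma \to \mathbb{P}(V)$; combined with Bestvina--Mess ($\dim\partial\Gamma = d-2$) and an induction on $d$ (with base cases $d = 4, 5$ handled via Theorems \ref{sl3rcase} and \ref{sl4rcase}), this produces a contradiction. For (ii), loxodromic elements of $\Gamma$ have distinct attracting and repelling fixed points $\gamma^\pm \in \partial\Gamma$, which must map to distinct $\rho(\gamma)$-fixed points $\xi(\gamma^\pm)$; more generally the fibers of $\xi$ form a closed $\Gamma$-invariant equivalence relation on $\partial\Gamma$, and combining minimality, the north--south dynamics of loxodromic elements, the absence of cut points in $\partial\Gamma$ (after first reducing to the one-ended case, since non-one-ended torsion-free hyperbolic groups have disconnected boundary whereas Benoist boundaries are connected), and Bestvina--Mess dimension theory rules out nontrivial fibers.

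Once $\xi$ is an equivariant topological embedding of a $(d-2)$-dimensional continuum into $\mathbb{P}(\mathbb{R}^d)$ with spanning image, an invariance-of-domain and separation argument produces a $\rho(\Gamma)$-invariant properly convex open component $\Omega$ of $\mathbb{P}(\mathbb{R}^d) \setminus \xi(\partial\Gamma)$. Properness of $\rho(\Gamma) \curvearrowright \Omega$ follows from hyperbolicity of $\Gamma$ and the loxodromic dynamics encoded by $\xi$; cocompactness follows from the dimension match $\mathrm{cd}(\Gamma) = d-1 = \dim\Omega$; strict convexity follows from injectivity of $\xi$ combined with absence of cut points in $\partial\Gamma$. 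The main obstacle is the injectivity step (ii): combining dimension theory, boundary topology of one-ended hyperbolic groups, and limit-map dynamics to rule out fiber collapse is the technical heart of the proof.
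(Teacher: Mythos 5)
The forward direction is fine, but the converse has several genuine gaps, and the overall strategy is not the one that works. Your stated plan is to prove that $\rho$ is projective Anosov and then invoke Theorem \ref{cohomdimd}; however, you never actually produce the Anosov property, and it is unclear how you could: from a single continuous equivariant map $\xi$ one can extract at most $P_1$-divergence (as in Lemma \ref{limitmapprops}), which is strictly weaker than Anosov, and you have no candidate for the transverse hyperplane limit map $\xi^{d-1}$. The paper deliberately avoids this route: it uses $P_1$-divergence only to get discreteness and a biproximal element, then builds the convex domain directly. Your actual construction of $\Omega$ as ``a properly convex open component of $\mathbb{P}(\mathbb{R}^d)\setminus\xi(\partial\Gamma)$'' does not work: a compact $(d-2)$-dimensional set need not separate $\mathbb{P}(\mathbb{R}^d)$, and even when it does there is no reason a complementary component is convex. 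The correct mechanism (Proposition \ref{Zimmerconnected}) is to show $\xi(\partial\Gamma)$ lies in an affine chart and take its convex hull there; putting the image in an affine chart is exactly where the biproximal element and the connectedness of $\partial\Gamma\setminus\{\gamma_0^{\pm}\}$ enter, and the latter requires that $\Gamma$ not split over a cyclic subgroup (Bowditch). You never address cyclic splittings at all; reducing to the one-ended case is not enough, since one-ended hyperbolic groups can split over $\mathbb{Z}$. Handling this requires the Louder--Touikan accessibility result to find a quasiconvex subgroup of full cohomological dimension with no cyclic splitting, plus a final argument (via repelling hyperplanes of biproximal elements) that this subgroup has finite index, hence equals $\Gamma$.

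Two further steps you rely on are not established and, happily, are not needed. First, injectivity of $\xi$: your argument that fibers collapse is a vague appeal to minimality, north--south dynamics, and dimension theory, and the specific claim that $\xi(\gamma^+)\ne\xi(\gamma^-)$ for loxodromic $\gamma$ does not follow from equivariance alone. The paper's proof never uses injectivity of $\xi$. Second, your assertion that strict convexity ``follows from injectivity of $\xi$ combined with absence of cut points'' is not a proof; strict convexity of the divisible domain is Benoist's theorem \cite[Thm 1.1]{benoist-divisible1}, applied once cocompactness is known (which does come, as you say, from the cohomological dimension hypothesis). In short: the spanning reduction and the forward direction are in the right spirit, but the injectivity step, the separation/convexity step, the strict convexity step, and the treatment of cyclic splittings are all gaps, and the first two would need to be replaced rather than repaired.
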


In Section \ref{examples} we discuss examples and questions related to our results.

\medskip\noindent
{\bf Remark:} One can obtain versions of all the results above in the
case when $\Gamma$ has torsion.
We recall that a representation is $P_k$-Anosov if and only
if its restriction to a finite index subgroup is $P_k$-Anosov and that finitely generated linear groups have finite index torsion-free
subgroups. It follows that if $\Gamma$ is not assumed to be torsion-free in the statements of Theorems \ref{sl3rcase} and \ref{borel4},
one can still conclude that $\rho(\Gamma)$ has a finite index subgroup  which is a free group or a surface group, while
in Theorem \ref{sl4rcase} one can conclude that $\rho(\Gamma)$ has a finite index subgroup isomorphic to a convex cocompact
subgroup of $\mathsf{PO}(3,1)$. In Theorem \ref{cohomdimd-k} one can conclude that $\rho(\Gamma)$ has the same
bounds on its virtual cohomological dimension, which one obtains on the cohomological dimension of $\Gamma$ in the 
torsion-free setting. If $\Gamma$ has a finite index torsion-free subgroup, one gets the same bounds on the virtual
cohomological dimension of $\Gamma$. In Theorems \ref{cohomdimd}  and \ref{benoistlimitmap} one must replace
the assumption that $\Gamma$ has cohomological dimension $d-1$ with the assumption that $\rho(\Gamma)$ has
virtual cohomological dimension $d-1$.

\medskip\noindent
{\bf Acknowledgements:} The authors would like to thank Fanny Kassel, Andres Sambarino, Michah Sageev,
Ralf Spatzier, Tengren Zhang and Andrew Zimmer for helpful conversations during the course of this work.

\section{Background}

\subsection{Anosov representations}
We briefly recall the definition of an Anosov representation and its crucial properties.
We first set some notation.
If $\Gamma$ is a hyperbolic group, we will fix a finite generating set for $\Gamma$
and let $|\gamma|$ denote the minimal word length of $\gamma$ and let $||\gamma||$ denote
the minimal word length of an element conjugate to $\gamma$ (i.e. the minimal translation
length of the action of $\gamma$ on $\Gamma$). Let $\partial \Gamma$ denote the
Gromov boundary of $\Gamma$. If $A\in\mathsf{GL}(d,\mathbb R)$, we let 
$$\lambda_1(A)\ge\lambda_2(A)\ge\cdots\ge\lambda_d(A)$$
denote the ordered moduli of the eigenvalues of $A$ (with multiplicity).

We will use a recent theorem of Kassel-Potrie \cite{kassel-potrie} (see also \cite[Thm 4.3]{kassel}) to give a simple definition
of $P_k$-Anosov representations.  If $\Gamma$ is a hyperbolic group and $1\le k\le\frac{d}{2}$
a representation $\rho:\Gamma\to\mathsf{GL}(d,\mathbb R)$ is  said to be {\em $P_k$-Anosov} if 
there exist constants $\mu,C>0$ so that 
$$\frac{\lambda_k(\rho(\gamma))}{\lambda_{k+1}(\rho(\gamma))}\ge Ce^{\mu||\gamma||}$$
for all $\gamma\in\Gamma$. A representation $\rho:\Gamma\to\mathsf{SL}(d,\mathbb R)$ is said to be {\em Anosov} 
if it is $P_k$-Anosov for
some $1\le k\le\frac{d}{2}$ and is said to be {\em Borel Anosov} if it is $P_k$-Anosov for every
$1\le k\le\frac{d}{2}$. It follows immediately from this definition that Anosov representations have
discrete image and finite kernel. 
(This definition is based on earlier definitions in terms of singular values due to Kapovich-Leeb-Porti \cite{KLP2},
Gu\'eritaud-Guichard-Kassel-Wienhard \cite{GGKW} and Bochi-Potrie-Sambarino \cite{BPS}.)

If a representation is $P_1$-Anosov, we call it {\em projective Anosov}. Projective Anosov representations are
in some sense the most general class of Anosov representations. If $\rho:\Gamma\to \mathsf{G}$ is an Anosov
representation into any semisimple Lie group, then there exists an irreducible representation 
$\tau:\mathsf{G}\to\mathsf{SL}(d,\mathbb R)$ so that $\tau\circ\rho$ is projective Anosov
(see \cite[Prop. 4.3]{guichard-wienhard}).

If a representation $\rho:\Gamma\to\mathsf{GL}(d,\mathbb R)$ is $P_k$-Anosov representation,
then it admits \hbox{$\rho$-equivariant} continuous maps 
$$\xi_\rho^k:\partial\Gamma\to {\rm Gr}_k(\mathbb R^d)\ \ \ {\rm and}\ \ \ \xi_\rho^{d-k}:\partial\Gamma\to {\rm Gr}_{d-k}(\mathbb R^d)$$
into the Grassmanian of $k$-planes and $(d-k)$-planes in $\mathbb R^d$. (If $d=\frac{k}{2}$, then $\xi_\rho^k=\xi_\rho^{d-k}$.)
If $\gamma^+\in\partial\Gamma$ is
the attracting fixed point of $\gamma$, then $\xi_\rho^k(\gamma^+)$ and $\xi_\rho^{d-k}(\gamma^+)$ are the attracting
$k$-planes and $(d-k)$-planes for $\rho(\gamma)$. Moreover, if $x,y\in\partial\Gamma$ are distinct, then
$$\xi_\rho^k(x)\subset\xi_\rho^{d-k}(x)\ \ \ {\rm and}\ \ \ \xi_\rho^k(x)\oplus\xi_\rho^{d-k}(y)=\mathbb R^d.$$
If $1\le j<k\le \frac{d}{2}$ and $\rho$ is both $P_j$-Anosov and $P_k$-Anosov, then
$$\xi_\rho^j(x)\subset\xi_\rho^k(x)\subset\xi_\rho^{d-k}(x)\subset\xi_\rho^{d-j}(x)$$
for all $x\in\partial\Gamma$.
(See Guichard-Wienhard \cite[Section 2]{guichard-wienhard} for a careful discussion of limit maps of Anosov representations.)

If $\rho:\Gamma\to\mathsf{SL}(d,\mathbb R)$ is a $P_k$-Anosov representation we may define a sphere bundle
$$E_{\rho,k}=\bigcup_{x\in\partial\Gamma} S(\xi^k(x))\subset S(\mathbb R^d)$$
over $\partial \Gamma$ where if $V$ is subspace of $\mathbb R^d$, then $S(V)$ is the unit sphere of $V$.
The bundle map $p_{\rho,k}:E_{\rho,k}\to \partial\Gamma$ is defined so that
$p_{\rho,k}(S(\xi^k(x))=x$. Notice that $p_{\rho,k}$ is well-defined, since, by transversality, $S(\xi^k(x))$ is disjoint from $S(\xi^k(y))$
if $x\ne y$.

\begin{lemma}
\label{fibrebundle}
If $\rho:\Gamma\to\mathsf{SL}(d,\mathbb R)$ is a $P_k$-Anosov representation, then
$p_{\rho,k}:E_{\rho,k}\to \partial\Gamma$ is a fibre bundle with fibres homeomorphic to $S^{k-1}$
In particular,  if $\partial\Gamma$ has topological dimension $m$, then $E_{\rho,k}$ has topological dimension $m+k-1$.
\end{lemma}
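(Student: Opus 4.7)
The map $p_{\rho,k}\colon E_{\rho,k}\to\partial\Gamma$ is, by construction, the pullback under the continuous limit map $\xi_\rho^k\colon\partial\Gamma\to\mathrm{Gr}_k(\mathbb R^d)$ of the unit sphere bundle of the tautological $k$-plane bundle over $\mathrm{Gr}_k(\mathbb R^d)$. The tautological sphere bundle is a locally trivial $S^{k-1}$-bundle, so its pullback is too; this gives the fibre bundle assertion essentially for free, with continuity of $\xi_\rho^k$ (recalled above from Guichard--Wienhard) as the only input.

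To exhibit explicit local trivializations, fix $x_0\in\partial\Gamma$ and pick any $(d-k)$-plane $W\subset\mathbb R^d$ transverse to $\xi_\rho^k(x_0)$, for instance $W=\xi_\rho^k(x_0)^\perp$. Transversality to $W$ is an open condition in the Grassmannian, so continuity of $\xi_\rho^k$ produces a neighbourhood $U$ of $x_0$ on which $\xi_\rho^k(x)\oplus W=\mathbb R^d$. Let $\pi\colon\mathbb R^d\to\xi_\rho^k(x_0)$ be the projection along $W$; its restriction $\pi_x:=\pi|_{\xi_\rho^k(x)}\colon\xi_\rho^k(x)\to\xi_\rho^k(x_0)$ is then a linear isomorphism varying continuously with $x\in U$ (this is the graph chart on the Grassmannian). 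Define
$$\Phi\colon p_{\rho,k}^{-1}(U)\longrightarrow U\times S(\xi_\rho^k(x_0)),\qquad \Phi(v)=\bigl(p_{\rho,k}(v),\;\pi(v)/\|\pi(v)\|\bigr),$$
whose inverse sends $(x,w)$ to $\pi_x^{-1}(w)/\|\pi_x^{-1}(w)\|$. Identifying $S(\xi_\rho^k(x_0))\cong S^{k-1}$ completes the trivialization.

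For the dimension statement, compactness of $\partial\Gamma$ furnishes a finite cover by trivializing open sets $U_1,\dots,U_N$ with $p_{\rho,k}^{-1}(U_i)\cong U_i\times S^{k-1}$. Since $S^{k-1}$ is a finite polyhedron, the product formula for covering dimension gives $\dim(U_i\times S^{k-1})=\dim U_i+(k-1)\le m+k-1$, and the finite sum theorem yields $\dim E_{\rho,k}\le m+k-1$; the reverse inequality is realised in any single chart. There is no serious obstacle here, since the lemma is essentially topological bookkeeping: the only mildly delicate step is continuity of $\Phi^{-1}$, which reduces to the standard fact that the graph coordinate on the Grassmannian is a homeomorphism onto its image.
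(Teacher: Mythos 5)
Your proof is correct and follows essentially the same route as the paper: the local trivialization via projection onto a fixed fibre $\xi_\rho^k(x_0)$ along a transverse complement is exactly the paper's orthogonal-projection chart, and the dimension count via the local product structure and the polyhedral product theorem is the paper's appeal to Hurewicz, spelled out with the finite sum theorem. The pullback-of-the-tautological-bundle framing is a nice conceptual gloss but not a genuinely different argument.
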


\begin{proof}
If $y\ne x$, let $\pi_{x,y}:\xi^k(y)\to\xi^{k}(x)$ denote orthogonal projection and notice that there exists an open neighborhood $U_x$ of $x$ in $\partial\Gamma$ such that
if $y\in U_x$ then $\pi_{x,y}$ is an isomorphism. Then there is a homeomorphism $\phi_x:p_{\rho,k}^{-1}(U_x)\to U_x\times S(\xi^k(x))$
where
$$\phi_x(z)=\left(p_{\rho,k}(z),\frac{\pi_{x,p(z)}(z)}{||\pi_{x,p(z)}(z)||}\right).$$
Therefore, $p_{\rho,k}$ is a fibre bundle with fibres homeomorphic to $S^{k-1}$.
Since $E_{\rho,k}$ is locally a topological product of $\mathbb R^{k-1}$ and a 
compact Hausdorff space $\partial\Gamma$ of topological dimension $m$,
it has topological dimension $m+k-1$ (see \cite{hurewicz-dimension}).
\end{proof}

\subsection{Semisimple representations}

We recall that a representation $\rho:\Gamma \rightarrow \mathsf{SL}(d, \mathbb{R})$ is said to be {\em semisimple}
if the Zariski closure of $\rho(\Gamma)$ in $\mathsf{SL}(d, \mathbb{R})$ is a reductive real algebraic Lie group. 
Moreover, if $\rho$ is semisimple, there exists a decomposition $\mathbb{R}^d=V_{1} \oplus...\oplus V_{k}$ 
into irreducible $\rho(\Gamma)$-modules.

If $\rho$ is a semisimple projective Anosov representation, then the restriction of
$\rho$ to the subspace spanned by the image of its limit map is irreducible.

\begin{proposition}
\label{limitmapspans}
If $\rho:\Gamma \rightarrow \SL$ is a semisimple projective Anosov representation,
then the action of $\rho(\Gamma)$ on the subspace
$W\subset\mathbb R^d$ spanned by $\xi_\rho^1(\partial\Gamma)$ is irreducible. 
Moreover,  if  $\rho_{W}:\Gamma\to \mathsf{SL}^\pm(W)$ is given by $\rho_W(\gamma)=|{\rm det}(\rho(\gamma)|_W)|^{-1/{\rm dim}(W)}\rho(\gamma)|_W$,
then $\rho_W$ is projective Anosov.
 \end{proposition}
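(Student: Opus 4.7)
The plan is to deduce irreducibility of the $\rho(\Gamma)$-action on $W$ from minimality of the $\Gamma$-action on $\partial\Gamma$ together with proximality of Anosov elements, and then to transfer the projective Anosov property from $\rho$ to $\rho_W$ by a direct eigenvalue comparison. The key observation throughout is that the attracting line of every loxodromic $\rho(\gamma)$ already lies inside $W$.

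For irreducibility, I would first note that $W$ is $\rho(\Gamma)$-invariant because $\xi_\rho^1$ is $\rho$-equivariant, and argue by contradiction. If $W$ is reducible, semisimplicity of $\rho$ produces a splitting $W=W_1\oplus W_2$ with each $W_i$ a proper nonzero $\rho(\Gamma)$-invariant subspace. For any infinite-order $\gamma\in\Gamma$, the projective Anosov hypothesis makes $\rho(\gamma)$ proximal with unique attracting line $\xi_\rho^1(\gamma^+)\subset W$; since $\rho(\gamma)$ preserves the direct sum and the top eigenspace of $\rho(\gamma)|_W$ is one-dimensional, this line must lie in either $\mathbb P(W_1)$ or $\mathbb P(W_2)$. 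Attracting fixed points of infinite-order elements are dense in $\partial\Gamma$, so continuity of $\xi_\rho^1$ together with closedness of $\mathbb P(W_1)\cup\mathbb P(W_2)\subset\mathbb P(W)$ forces $\xi_\rho^1(\partial\Gamma)\subset \mathbb P(W_1)\sqcup\mathbb P(W_2)$. The preimages $(\xi_\rho^1)^{-1}(\mathbb P(W_i))$ therefore partition $\partial\Gamma$ into two closed $\Gamma$-invariant sets, and minimality of the $\Gamma$-action on $\partial\Gamma$ (valid because $\Gamma$ is non-elementary hyperbolic) forces one of them to be empty. But then $\xi_\rho^1(\partial\Gamma)\subset\mathbb P(W_i)$ for a single $i$, contradicting the definition of $W$ as the span of $\xi_\rho^1(\partial\Gamma)$.

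For the second assertion, since $\rho_W(\gamma)$ is a positive rescaling of $\rho(\gamma)|_W$, ratios of eigenvalues agree and it suffices to bound $\lambda_1(\rho(\gamma)|_W)/\lambda_2(\rho(\gamma)|_W)$. Semisimplicity provides a $\rho(\Gamma)$-invariant complement $W'\subset\mathbb R^d$ of $W$, so the multiset of eigenvalue moduli of $\rho(\gamma)$ is the disjoint union of those of $\rho(\gamma)|_W$ and $\rho(\gamma)|_{W'}$. Since the top eigenline of $\rho(\gamma)$ lies in $W$ we have $\lambda_1(\rho(\gamma)|_W)=\lambda_1(\rho(\gamma))$, while $\lambda_2(\rho(\gamma)|_W)\leq\lambda_2(\rho(\gamma))$ because the second-largest modulus in the $W$-spectrum is itself the modulus of an eigenvalue of $\rho(\gamma)$. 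Combining these with the Kassel--Potrie bound for $\rho$ gives
\[
\frac{\lambda_1(\rho_W(\gamma))}{\lambda_2(\rho_W(\gamma))} \;=\; \frac{\lambda_1(\rho(\gamma)|_W)}{\lambda_2(\rho(\gamma)|_W)} \;\geq\; \frac{\lambda_1(\rho(\gamma))}{\lambda_2(\rho(\gamma))} \;\geq\; C e^{\mu||\gamma||},
\]
and the Kassel--Potrie characterization concludes that $\rho_W$ is projective Anosov. The most delicate step I anticipate is the one that uses proximality to pin the attracting line into a single summand; the minimality and eigenvalue-comparison steps are both routine given the background machinery.
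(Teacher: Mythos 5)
Your proof is correct and follows essentially the same route as the paper: proximality places the attracting line $\xi_\rho^1(\gamma^+)$ inside a single invariant summand, density of attracting fixed points (equivalently minimality of the boundary action) propagates this to all of $\xi_\rho^1(\partial\Gamma)$, and the Anosov property of $\rho_W$ follows from the identical eigenvalue comparison $\lambda_1(\rho(\gamma)|_W)=\lambda_1(\rho(\gamma))$ and $\lambda_2(\rho(\gamma)|_W)\le\lambda_2(\rho(\gamma))$. The only cosmetic difference is that you run a contradiction argument on a hypothetical splitting $W=W_1\oplus W_2$, whereas the paper identifies $W$ directly with an irreducible factor of the decomposition of $\mathbb{R}^d$; both hinge on the same facts.
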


\begin{proof} 
Let $\mathbb{R}^d=V_{1} \oplus...\oplus V_{k}$ be the decomposition into irreducible $\rho(\Gamma)$-modules.
Let $\gamma$ be an infinite order element of $\Gamma$. Then, since $\rho$ is projective Anosov, 
$\rho(\gamma)$ is proximal and $\xi_\rho^1(\gamma^+)$ is the
attracting eigenline of $\rho(\gamma)$. Since $\xi_\rho^1(\gamma^+)$ is an attracting eigenline, it must be contained in one of the $V_i$, 
so we may assume $\xi_\rho^1(\gamma^+)\subset V_1$.
Since $V_{1}$ is $\rho(\Gamma)$-invariant we have that $\xi_\rho^1(\alpha(\gamma^+))=\rho(\alpha)(\xi_\rho^1(\gamma^{+})) \subset V_{1}$
for all $\alpha\in\Gamma$.
Moreover, since the orbit of $\gamma^+$ is dense in $\partial\Gamma$, we conclude that 
that $\left \langle \xi(\partial_{\infty}\Gamma) \right \rangle$ is a $\rho(\Gamma)$-invariant vector subspace of $V_{1}$. 
The restriction of $\rho$ to $V_1$ is irreducible, so it follows that $\left \langle \xi_\rho^1(\partial\Gamma) \right \rangle=V_{1}$.  

Notice that $\rho_W$ is projective Anosov,  since
$\rho$ is projective Anosov and 
\hbox{$\lambda_1(\rho(\gamma)|_W)=\lambda_1(\rho(\gamma))$} and
$\lambda_2(\rho(\gamma)|_W)\le\lambda_2(\rho(\gamma))$ for all $\gamma\in\Gamma$.
\end{proof}

Benoist \cite{benoist-limitcone}  used work of Abels-Margulis-Soifer \cite{AMS} to establish
the following useful relationship between eigenvalues and singular values for semisimple
representation (see \cite[Thm 4.12]{GGKW}  for a proof).

\begin{theorem}
\label{finitesubset}
If $\Gamma$ is a finitely generated group and $\rho:\Gamma\to \mathsf{SL}(d,\mathbb R)$ is semisimple,
then there exists a finite subset $A$ of $\Gamma$ and $M>0$ so
that if $\gamma\in\Gamma$, then there exists $\alpha\in A$ so that
$$|\log\lambda_i(\rho(\gamma \alpha))-\log\sigma_i(\rho(\gamma))|\le M$$
for all $i$.
\end{theorem}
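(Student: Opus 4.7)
My plan is to reduce the statement to a direct application of the Abels--Margulis--Soifer theorem, using semisimplicity of $\rho$ only to place us inside a reductive ambient group. Since $\rho$ is semisimple, the Zariski closure $G=\overline{\rho(\Gamma)}^Z$ is a reductive real algebraic subgroup of $\mathsf{SL}(d,\mathbb R)$, and $\rho(\Gamma)$ is Zariski dense in $G$.

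First I would invoke the Abels--Margulis--Soifer theorem \cite{AMS} to produce a finite subset $F\subset\rho(\Gamma)$ and constants $r>\epsilon>0$ such that for every $g\in G$ there exists $f\in F$ for which $gf$ is simultaneously $(r,\epsilon)$-proximal on each exterior power $\Lambda^i\mathbb R^d$, $1\leq i\leq d-1$. Concretely, the attracting line of $gf$ on $\Lambda^i\mathbb R^d$ lies at angle at least $\epsilon$ from its repelling hyperplane, and the contraction transverse to it is of order at least $r$. Since $F\subset\rho(\Gamma)$, I fix a finite lift $A\subset\Gamma$ with $\rho(A)=F$.

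Next I would apply the standard quantitative comparison between Cartan and Jordan projections: there is a constant $C=C(r,\epsilon)>0$ such that whenever $h\in\mathsf{SL}(d,\mathbb R)$ is $(r,\epsilon)$-proximal on each $\Lambda^i\mathbb R^d$,
$$\Big|\sum_{j=1}^{i}\log\sigma_j(h)-\sum_{j=1}^{i}\log\lambda_j(h)\Big|\leq C\qquad(1\leq i\leq d-1),$$
and telescoping consecutive bounds gives $|\log\sigma_i(h)-\log\lambda_i(h)|\leq 2C$ for each $i$. Given $\gamma\in\Gamma$, apply Abels--Margulis--Soifer to $g=\rho(\gamma)$, obtain some $f\in F$, and choose $\alpha\in A$ with $\rho(\alpha)=f$; the displayed bound then applies to $h=\rho(\gamma\alpha)$. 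Since right multiplication by the bounded element $\rho(\alpha)$ distorts singular values by at most $M'=\log\max_{\beta\in A}\max\{\|\rho(\beta)\|,\|\rho(\beta^{-1})\|\}$, I may replace $\sigma_i(\rho(\gamma\alpha))$ by $\sigma_i(\rho(\gamma))$ at an additive cost of $M'$, producing the claim with $M=2C+M'$.

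The main obstacle is the Abels--Margulis--Soifer input itself: constructing a finite subset of the Zariski-dense subgroup $\rho(\Gamma)$ that simultaneously loxodromizes every $g\in G$ in every fundamental representation of $G$, with uniform quantitative constants. This is the genuine content of \cite{AMS}, relying on a delicate ping-pong argument in projective spaces combined with the representation theory of reductive algebraic groups; in writing up the proof I would invoke it as a black box and point the reader to the exposition in \cite[Thm 4.12]{GGKW}.
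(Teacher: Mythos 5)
The paper does not actually prove this statement---it quotes it from Benoist/Abels--Margulis--Soifer and points to \cite[Thm 4.12]{GGKW} for a proof---and your overall strategy (AMS applied to the reductive Zariski closure, a Cartan-versus-Jordan comparison for the perturbed element, then absorbing the bounded distortion caused by right multiplication by $\rho(\alpha)$) is exactly the argument behind that citation. The last step is fine: $\sigma_i(\rho(\gamma\alpha))$ and $\sigma_i(\rho(\gamma))$ differ multiplicatively by at most $\max\{\|\rho(\alpha)\|,\|\rho(\alpha)^{-1}\|\}$.

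There is, however, a genuine gap in the quantitative middle step as you wrote it. Abels--Margulis--Soifer, applied to a subgroup that is Zariski dense in the reductive group $G=\overline{\rho(\Gamma)}^{Z}$, produces elements $gf$ that are $(r,\epsilon)$-proximal in the \emph{proximal (fundamental) representations of $G$} --- equivalently, uniformly $G$-regular --- not in every exterior power $\Lambda^i\mathbb R^d$ of the ambient $\mathsf{SL}(d,\mathbb R)$. When $G$ is a proper reductive subgroup this stronger property is typically unattainable: for $G=\mathsf{SO}(3,1)\subset\mathsf{SL}(4,\mathbb R)$ no element of $G$ is proximal on $\Lambda^2\mathbb R^4$, since the top eigenvalue modulus there always has multiplicity at least two. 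So the lemma you invoke (bounding $\sum_{j\le i}\log\sigma_j-\sum_{j\le i}\log\lambda_j$ under $(r,\epsilon)$-proximality on each $\Lambda^i$) cannot be applied for all $i$, and your final paragraph, which correctly speaks of the fundamental representations of $G$, is inconsistent with the hypothesis used here. The repair is standard but must be said: after conjugating so that $G$ is stable under transpose, both $\log\sigma_i(h)$ and $\log\lambda_i(h)$ for $h\in G$ are obtained by evaluating the restricted weights of the inclusion $G\hookrightarrow\mathsf{SL}(d,\mathbb R)$ on the Cartan projection $\mu_G(h)$ and the Jordan projection $\lambda_G(h)$ of $G$, respectively; the AMS/Benoist estimate $\|\mu_G(gf)-\lambda_G(gf)\|\le C(r,\epsilon)$ for $(r,\epsilon)$-regular elements of $G$ then yields $|\log\sigma_i(\rho(\gamma\alpha))-\log\lambda_i(\rho(\gamma\alpha))|\le C'$ for every $i$, after which your concluding estimate goes through.
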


Gueritaud, Guichard, Kassel and Wienhard \cite[Section 2.5.4]{GGKW} observe that given any representation $\rho:\Gamma \to \SL$, one may 
define a {\em semisimplification} $\rho^{ss}:\Gamma\to\SL$. They further show that the Jordan projections agree,  so
$\rho$ and $\rho^{ss}$ share the same Anosov qualities.

\begin{lemma}
\label{semisimpleproperties}
{\rm (\cite[Prop 2.39, Lemma 2.40]{GGKW})}
If $\rho:\Gamma\to \mathsf{SL}(d,\mathbb R)$ is a representation of a hyperbolic group and $\rho^{ss}$ is a semisimplification of $\rho$,
then $\lambda_i(\rho^{ss}(\gamma))=\lambda_i(\rho(\gamma))$ for all $i$ and all $\gamma\in\Gamma$.
In particular, $\rho$ is $P_k$-Anosov if and only if $\rho^{ss}$ is $P_k$-Anosov.
\end{lemma}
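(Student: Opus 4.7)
The plan is to reduce the eigenvalue identity to a statement about block upper triangular matrices and then to deduce the Anosov equivalence directly from the Kassel-Potrie criterion recalled just above the lemma.

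First I would construct $\rho^{ss}$ concretely. Choose a Jordan-H\"older filtration
$$0 = W_0 \subset W_1 \subset \cdots \subset W_k = \mathbb{R}^d$$
by $\rho(\Gamma)$-invariant subspaces such that each quotient $V_i = W_i/W_{i-1}$ is irreducible as a $\rho(\Gamma)$-module; write $\bar\rho_i:\Gamma\to \mathsf{GL}(V_i)$ for the induced representation and set $\rho^{ss}=\bigoplus_i \bar\rho_i$ acting on $\bigoplus_i V_i \cong \mathbb{R}^d$. Such a filtration exists by induction on $d$, and $\det\rho^{ss}(\gamma)=\det\rho(\gamma)=1$ since both equal $\prod_i \det\bar\rho_i(\gamma)$, so $\rho^{ss}$ takes values in $\mathsf{SL}(d,\mathbb{R})$.

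Next I would choose a basis of $\mathbb{R}^d$ adapted to the filtration by stringing together lifts of bases of the successive quotients. In this basis, $\rho(\gamma)$ is block upper triangular with diagonal blocks equal to the matrices of $\bar\rho_1(\gamma),\ldots,\bar\rho_k(\gamma)$, while $\rho^{ss}(\gamma)$ is block diagonal with the same diagonal blocks. Since the characteristic polynomial of a block upper triangular matrix is the product of those of its diagonal blocks, $\rho(\gamma)$ and $\rho^{ss}(\gamma)$ have identical characteristic polynomials and hence identical spectra with multiplicity, giving $\lambda_i(\rho(\gamma))=\lambda_i(\rho^{ss}(\gamma))$ for every $i$ and every $\gamma\in\Gamma$.

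The Anosov equivalence then follows immediately: the Kassel-Potrie criterion stated just before the lemma characterizes the $P_k$-Anosov property solely in terms of the ratios $\lambda_k(\rho(\gamma))/\lambda_{k+1}(\rho(\gamma))$ and the conjugacy word length $||\gamma||$, and these agree for $\rho$ and $\rho^{ss}$. The only genuinely delicate point is that a semisimplification is only well-defined up to conjugation, which would require the Jordan-H\"older theorem for $\rho(\Gamma)$-modules; for the eigenvalue identity, however, any single choice of composition series suffices, so this subtlety plays no role here.
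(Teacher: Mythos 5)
Your argument is correct, and it is essentially the argument behind the cited result: the paper itself gives no proof of this lemma, it simply quotes \cite[Prop.\ 2.39, Lemma 2.40]{GGKW}. Your route --- choose a composition series, take a basis adapted to it so that $\rho(\gamma)$ is block upper triangular with the diagonal blocks of $\rho^{ss}(\gamma)$, conclude that the characteristic polynomials and hence the ordered eigenvalue moduli $\lambda_i$ agree, and then read off the $P_k$-Anosov equivalence from the eigenvalue-gap definition via Kassel--Potrie adopted in the paper --- is exactly how this is standardly proved, and every step checks out (existence of the filtration by induction, multiplicativity of characteristic polynomials over the blocks, and the determinant computation placing $\rho^{ss}$ in $\mathsf{SL}(d,\mathbb R)$). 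One point worth flagging: elsewhere in the paper (e.g.\ the proof of Lemma \ref{factornonconstant}) the semisimplification is used through its description as a limit of conjugates of $\rho$, which is how \cite{GGKW} set it up; your associated-graded construction agrees with that notion up to conjugation (conjugate by diagonal matrices scaling the blocks so the off-diagonal part tends to zero), and since the $\lambda_i$ are conjugation invariant this ambiguity is harmless for the present lemma, exactly as you note.
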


Proposition \ref{limitmapspans}  and Lemma \ref{semisimpleproperties} have the following useful corollary.

\begin{corollary}
\label{reducible}
If $\rho:\Gamma \rightarrow \mathsf{SL}(d,\mathbb R)$ is a reducible projective Anosov representation and $\rho^{ss}$ is its
semisimplification, then the action of $\rho^{ss}(\Gamma)$ on 
the proper subspace $W$ of $\mathbb R^d$ spanned by $\xi^1_{\rho^{ss}}(\partial\Gamma)$ is irreducible. 
Moreover, $\rho^{ss}_W$ is projective Anosov.
 \end{corollary}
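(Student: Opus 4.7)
The plan is to deduce Corollary \ref{reducible} by combining the two results just established, with a short additional argument showing that $W$ is a proper subspace.

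First, I would note that since $\rho$ is projective Anosov, Lemma \ref{semisimpleproperties} gives immediately that $\rho^{ss}$ is also projective Anosov. Since $\rho^{ss}$ is by construction semisimple, Proposition \ref{limitmapspans} applies to $\rho^{ss}$: the subspace $W=\langle \xi^1_{\rho^{ss}}(\partial \Gamma)\rangle$ is $\rho^{ss}(\Gamma)$-invariant, the action of $\rho^{ss}(\Gamma)$ on $W$ is irreducible, and $\rho^{ss}_W$ is projective Anosov. So the only content left to verify is that $W$ is a proper subspace of $\mathbb R^d$.

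To see this, recall that $\rho^{ss}$ is obtained from a composition series $0=U_0 \subsetneq U_1 \subsetneq \cdots \subsetneq U_n=\mathbb R^d$ of $\rho(\Gamma)$-invariant subspaces by forming the direct sum $\rho^{ss} \cong \bigoplus_{i=1}^n U_i/U_{i-1}$, and each summand is irreducible. Because $\rho$ is reducible, there is at least one proper nontrivial invariant subspace, so we must have $n\geq 2$; thus $\rho^{ss}$ decomposes as a direct sum $\mathbb R^d = V_1\oplus\cdots\oplus V_k$ of irreducible $\rho^{ss}(\Gamma)$-modules with $k\geq 2$, and each $V_i$ is a proper subspace.

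Finally, as in the proof of Proposition \ref{limitmapspans}, fix any infinite order $\gamma\in\Gamma$. Since $\rho^{ss}$ is projective Anosov, $\rho^{ss}(\gamma)$ is proximal, and its attracting eigenline $\xi_{\rho^{ss}}^1(\gamma^+)$ must lie in one of the irreducible summands, say $V_1$. Because $V_1$ is $\rho^{ss}(\Gamma)$-invariant, the whole $\Gamma$-orbit of $\xi_{\rho^{ss}}^1(\gamma^+)$ is contained in $V_1$, and since this orbit is dense in $\xi^1_{\rho^{ss}}(\partial\Gamma)$ (because $\gamma^+$ has dense orbit in $\partial \Gamma$) and $\xi^1_{\rho^{ss}}$ is continuous, we conclude $W\subset V_1\subsetneq \mathbb R^d$. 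The one routine point to confirm is that $\rho$ being reducible forces $k\geq 2$ in the semisimplification; everything else is an immediate pullback from Proposition \ref{limitmapspans}, so I do not anticipate a substantial obstacle.
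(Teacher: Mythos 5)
Your proposal is correct and follows essentially the same route as the paper, which states the corollary as an immediate consequence of Lemma \ref{semisimpleproperties} (to transfer the Anosov property to $\rho^{ss}$) and Proposition \ref{limitmapspans} (applied to the semisimple representation $\rho^{ss}$). Your additional verification that $W$ is proper — reducibility of $\rho$ forces at least two irreducible summands in $\rho^{ss}$, and the proximality/density argument places $W$ inside one of them — is exactly the point the paper leaves implicit, and it is argued correctly.
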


\subsection{Convex cocompactness}

Danciger-Gu\'eritaud-Kassel \cite{DGK} and Zimmer \cite{zimmer}  have shown that many projective Anosov representations
can be understood as convex cocompact actions on properly convex domains in projective space.
We recall that a domain $\Omega\subset\mathbb P(\mathbb R^d)$ is said to be {\em properly convex} if
it is a bounded subset of an affine chart $A=\mathbb P(\mathbb R^d-V)$ where $V$ is a $(d-1)$-plane in
$\mathbb R^d$ and $\Omega$ is convex in $A$. The domain $\Omega$ is {\em strictly convex} if it is a bounded,
strictly convex subset of some affine chart.
We say that $\rho(\Gamma)$ is a convex cocompact
subgroup of ${\rm Aut}(\Omega)$ if $\rho(\Gamma)$ preserves $\Omega$ and there is a closed convex 
$\rho(\Gamma)$-invariant subset $C$ of
$\Omega$ so that $C/\rho(\Gamma)$ is compact. 
(See \cite{DGK} or \cite{zimmer} for details.)

\begin{theorem}
\label{Zimmer}
{\rm (Zimmer \cite[Thm 1.25]{zimmer})}
Suppose that $\Gamma$ is a torsion-free hyperbolic group with connected boundary $\partial\Gamma$ which is
not a surface group and
 $\rho:\Gamma\to \mathsf{SL}(d,\mathbb R)$ is an irreducible projective Anosov
representation. Then there exists a properly convex domain $\Omega\subset\mathbb P(\mathbb R^d)$
so that $\rho(\Gamma)$ is a convex cocompact subgroup of ${\rm Aut}(\Omega)$.
\end{theorem}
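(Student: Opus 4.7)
The plan is to construct the invariant properly convex domain $\Omega$ as the interior of the closed convex hull of the limit set $\xi_\rho^1(\partial\Gamma)$ in an appropriate affine chart, and then to verify cocompactness using the north-south dynamics inherent to projective Anosov representations.

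First, since $\rho$ is irreducible, Proposition \ref{limitmapspans} gives that $\xi_\rho^1(\partial\Gamma)$ spans $\mathbb R^d$, so its convex hull in any chart containing it will be full-dimensional. Second, and this is the key difficulty, I need to locate a projective hyperplane $\mathbb P(H)$ disjoint from $\xi_\rho^1(\partial\Gamma)$ so that the chart $A=\mathbb P(\mathbb R^d)\setminus\mathbb P(H)$ contains the whole limit set. Transversality gives that $\mathbb P(\xi_\rho^{d-1}(x))\cap \xi_\rho^1(\partial\Gamma)=\{\xi_\rho^1(x)\}$, so the hyperplanes coming from the dual limit map themselves just barely fail to be disjoint from the limit set; a perturbation argument is required. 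Here the hypotheses enter: connectedness of $\partial\Gamma$ (equivalently, $\Gamma$ is one-ended) prevents $\xi_\rho^1(\partial\Gamma)$ from being a disconnected, Cantor-like compactum that could obstruct the existence of a transverse hyperplane, and the exclusion of surface groups rules out the degenerate case where $\xi_\rho^1(\partial\Gamma)$ is a topological circle wrapping $\mathbb P(\mathbb R^d)$ nontrivially. Together these should allow a compactness argument in the Grassmannian ${\rm Gr}_{d-1}(\mathbb R^d)$ to produce the desired $H$.

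Third, once the chart is fixed, let $K$ be the closed convex hull of $\xi_\rho^1(\partial\Gamma)$ in $A$ and set $\Omega=\interior(K)$. Then $\Omega$ is a bounded, convex, full-dimensional open subset of an affine chart, so $\Omega$ is properly convex; it is $\rho(\Gamma)$-invariant because the limit set is equivariant (one may need to replace $H$ by a suitable $\rho(\Gamma)$-invariant choice, for instance by taking the intersection over a carefully chosen family of supporting hyperplanes). For cocompactness, set $C=K\cap \Omega$ and argue by contradiction: if $C/\rho(\Gamma)$ were not compact, a sequence $x_n\in C$ would satisfy $\rho(\gamma_n)^{-1}(x_n)\to x_\infty\in \overline{\Omega}$ along a suitable escaping sequence $\gamma_n$. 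Using the Anosov inequality $\lambda_1(\rho(\gamma))/\lambda_2(\rho(\gamma))\ge Ce^{\mu\|\gamma\|}$, the iterates $\rho(\gamma_n)$ act by proximal north-south dynamics with attractors converging into $\xi_\rho^1(\partial\Gamma)\subset\partial\Omega$; contraction away from $\xi_\rho^{d-1}(\cdot)$ then forces $x_\infty\in \partial\Omega$, contradicting $C\subset\Omega$.

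The main obstacle is the second step, namely producing the affine chart disjoint from the limit set. This is a purely topological/projective statement about $\xi_\rho^1(\partial\Gamma)$, and it is the place where both the connectedness and non-surface hypotheses are indispensable; once this hyperplane is in hand, the convex-hull construction and the cocompactness argument flow from standard machinery for Anosov representations.
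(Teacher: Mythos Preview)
This theorem is not proved in the paper; it is quoted verbatim from Zimmer \cite[Thm~1.25]{zimmer} and used as a black box. So there is no ``paper's own proof'' to compare against. That said, your outline has genuine gaps.

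The decisive step is your Step~2, and what you have written there is not an argument but a wish. You say ``a compactness argument in the Grassmannian $\mathrm{Gr}_{d-1}(\mathbb R^d)$'' should produce a hyperplane $H$ disjoint from $\xi_\rho^1(\partial\Gamma)$, but you give no mechanism. The set of hyperplanes meeting the limit set is closed, so compactness alone tells you nothing; you need a reason its complement is nonempty. This is exactly where Zimmer's work lies, and it is not soft topology: the exclusion of surface groups is not because a circle might ``wrap $\mathbb P(\mathbb R^d)$ nontrivially'' in some vague sense, but because concrete examples (Hitchin representations in even $d$; see the reference to \cite[Lem.~12.3]{DGK} in the proof of Proposition~\ref{hitchindontextend}) show the limit curve genuinely meets every hyperplane. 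Any correct argument must explain why a non-circle connected boundary forces the existence of a missed hyperplane, and that requires real input about the structure of the limit maps, not just compactness.

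Your Step~3 also contains a confusion: you suggest replacing $H$ by a ``$\rho(\Gamma)$-invariant choice'', but since $\rho$ is irreducible there is no invariant hyperplane. The correct statement is that the convex hull itself is invariant even though the chart is not; this is precisely the content of Proposition~\ref{Zimmerconnected} in the paper, which you should invoke directly once Step~2 is done. Finally, your Step~4 cocompactness sketch is too loose: the contradiction you describe shows points escape to $\partial\Omega$, but that is exactly what happens for \emph{non}-cocompact convex-cocompact-type actions too; one needs a finer argument (as in \cite{DGK,zimmer}) relating the full orbital limit set to $\xi_\rho^1(\partial\Gamma)$.
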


We recall that a hyperbolic group has connected boundary if and only if it is one-ended.

Danciger, Gueritaud and Kassel \cite{DGK} describe the maximal domain that a convex cocompact group acts on.

\begin{proposition}
\label{Omegastructure}
{\rm (Danciger-Gueritaud-Kassel \cite[Cor. 8.10]{DGK})}
Suppose $\rho:\Gamma\to \mathsf{SL}(d,\mathbb R)$ is an irreducible projective Anosov
representation, $\partial\Gamma$ is connected and $\rho(\Gamma)$ preserves a properly
convex open subset of $\mathbb P(\mathbb R^d)$. Then $\rho(\Gamma)$ acts convex cocompactly on
$$\Omega_{max}=\mathbb P(\mathbb R^d)-\bigcup_{x\in\partial\Gamma}\mathbb P(\xi_\rho^{d-1}(x))$$
and $\xi_\rho^1(\partial\Gamma)\subset \partial\Omega_{max}$.
\end{proposition}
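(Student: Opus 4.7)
The plan is to prove the proposition in three stages: first, show $\Omega_{\max}$ is open, non-empty, $\rho(\Gamma)$-invariant, and satisfies $\xi_\rho^1(\partial\Gamma)\subset\partial\Omega_{\max}$; second, establish that $\Omega_{\max}$ is properly convex; third, exhibit an invariant convex subset on which $\rho(\Gamma)$ acts cocompactly.

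For the first stage, continuity of $\xi_\rho^{d-1}$ together with compactness of $\partial\Gamma$ make the union of limit hyperplanes closed, so $\Omega_{\max}$ is open; equivariance of $\xi_\rho^{d-1}$ then gives $\rho(\Gamma)$-invariance. To produce non-emptiness I would prove the stronger inclusion $\Omega_0\subset\Omega_{\max}$, where $\Omega_0$ denotes the invariant properly convex open set provided by hypothesis. The argument is dynamical: for any infinite-order $\gamma\in\Gamma$, the projective Anosov condition implies $\rho(\gamma)$ is biproximal with attracting line $\xi_\rho^1(\gamma^+)$ and repelling hyperplane $\mathbb{P}(\xi_\rho^{d-1}(\gamma^-))$. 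If that hyperplane met $\Omega_0$, the iterates $\rho(\gamma^n)$ would simultaneously contract $\Omega_0$ toward $\xi_\rho^1(\gamma^+)$ and fix an interior projective segment pointwise, contradicting proper convexity. Density of attracting fixed points in $\partial\Gamma$, together with continuity of $\xi_\rho^{d-1}$, upgrades this to $\mathbb{P}(\xi_\rho^{d-1}(x))\cap\Omega_0=\emptyset$ for every $x\in\partial\Gamma$. A parallel proximality argument places $\xi_\rho^1(\partial\Gamma)$ in $\overline{\Omega_0}$, hence in $\partial\Omega_{\max}$.

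For the second stage, let $\Omega'$ denote the connected component of $\Omega_{\max}$ containing $\Omega_0$. By the previous step each hyperplane $\mathbb{P}(\xi_\rho^{d-1}(x))$ supports $\Omega'$ at $\xi_\rho^1(x)$, so every boundary point of $\Omega'$ admits a supporting hyperplane and $\Omega'$ is properly convex. To upgrade to the equality $\Omega'=\Omega_{\max}$ I would use connectedness of $\partial\Gamma$: the closed complement $\mathbb{P}(\mathbb R^d)\setminus\Omega_{\max}$ is the continuous image of $\partial\Gamma$ under $x\mapsto\mathbb{P}(\xi_\rho^{d-1}(x))$, and a connected family of hyperplanes cannot carve off an additional convex chamber from $\Omega'$ without violating the transversality of the pair $(\xi_\rho^1,\xi_\rho^{d-1})$.

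For the third stage I would adapt the standard projective convex hull construction: define $C_\rho$ as the closure in $\Omega_{\max}$ of the union of projective segments joining distinct pairs of points in $\xi_\rho^1(\partial\Gamma)$. The transversality identity $\xi_\rho^1(x)\oplus\xi_\rho^{d-1}(y)=\mathbb R^d$ for $x\ne y$ ensures these segments lie in $\Omega_{\max}$, and $C_\rho$ is then $\rho(\Gamma)$-invariant, closed in $\Omega_{\max}$, and convex. Cocompactness of the action on $C_\rho$ reduces via the usual argument to cocompactness of $\Gamma$ on the space of ordered distinct pairs in $\partial\Gamma$, mirroring the classical convex hull construction in rank one. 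The main obstacle I anticipate is the second stage: rigorously ruling out further chambers of $\Omega_{\max}$ requires the full strength of connectedness of $\partial\Gamma$ together with the biproximal dynamics, and this is precisely where the irreducibility hypothesis on $\rho$ enters, since it guarantees that the limit set spans $\mathbb R^d$ and prevents the hyperplane family from degenerating into a single affine arrangement.
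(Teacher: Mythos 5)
The paper itself offers no proof of this proposition---it is quoted verbatim from Danciger--Gu\'eritaud--Kassel \cite[Cor. 8.10]{DGK}---so your sketch has to be measured against what a complete proof requires, and it has genuine gaps at exactly the hard points. A small repair first: in your first stage the stated contradiction misfires, because $\rho(\gamma)$ does not fix its repelling hyperplane pointwise; the correct mechanism is duality --- the repelling hyperplane of $\rho(\gamma)$ is the attracting fixed point of the contragredient action on the dual convex set $\Omega_0^*$, hence lies in $\overline{\Omega_0^*}$ and is therefore disjoint from $\Omega_0$ --- and that is fixable. The serious gap is in your second stage. Knowing that every boundary point of $\Omega'$ lies on a limit hyperplane disjoint from $\Omega'$ is not a supporting-hyperplane statement in the convexity sense, and even supporting hyperplanes at the special points $\xi^1_\rho(x)$ would not yield convexity of a connected open subset of $\mathbb{P}(\mathbb{R}^d)$. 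What is actually needed (and what DGK prove) is that the connected components of the complement of a \emph{closed, connected} family of projective hyperplanes are convex --- this is precisely where connectedness of $\partial\Gamma$ does real work --- together with an argument identifying $\Omega_{max}$ with the relevant component; your "a connected family of hyperplanes cannot carve off an additional chamber without violating transversality" is an assertion, not an argument.

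Your third stage also does not close. Transversality $\xi^1_\rho(x)\oplus\xi^{d-1}_\rho(y)=\mathbb{R}^d$ controls only the endpoints: the projective line through $\xi^1_\rho(x)$ and $\xi^1_\rho(y)$ meets \emph{every} hyperplane $\mathbb{P}(\xi^{d-1}_\rho(z))$, so one of the two projective segments joining the endpoints always crosses the deleted set, and you must produce a consistent, $\rho(\Gamma)$-equivariant choice of segment (the one inside the properly convex component containing $\Omega_0$), which you have not done. Moreover a union of segments is not convex, so the invariant set must be the convex hull of $\xi^1_\rho(\partial\Gamma)$ in that component, and cocompactness on this hull does not reduce to the action of $\Gamma$ on distinct pairs: hyperbolic groups act cocompactly on distinct \emph{triples}, not pairs, and a point of the hull carries in addition a position along its segment that must be controlled. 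Controlling that parameter is exactly where the Anosov contraction/expansion (or a properness argument as in Zimmer and DGK) enters, and it is the heart of the theorem; as written, your reduction skips it.
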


We will also need the following result which is implicit in Zimmer's work \cite{zimmer}.

\begin{proposition}
\label{Zimmerconnected}
Suppose that $\Gamma$ is a hyperbolic group with connected boundary $\partial\Gamma$, 
\hbox{$\rho:\Gamma\to\mathsf{SL}(d,\mathbb R)$}
is a representation, and \hbox{$\xi:\partial\Gamma\to \mathbb P(\mathbb R^d)$} is a $\rho$-equivariant continuous map.
If $\xi(\partial \Gamma)$ spans $\mathbb R^d$ and lies inside
an affine chart, then there exists a properly convex domain $\Omega\subset\mathbb P(\mathbb R^d)$
so that $\rho(\Gamma)$ preserves $\Omega$.
\end{proposition}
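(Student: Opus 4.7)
The plan is to construct $\Omega$ as the projectivization of a convex cone built from a coherent lift of $\xi(\partial\Gamma)$ to $\mathbb{R}^d$. Connectedness of $\partial\Gamma$ will be the crucial hypothesis that lets us bypass the fact that the affine chart containing $\xi(\partial\Gamma)$ need not itself be $\rho(\Gamma)$-invariant.

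First, I would choose a nonzero linear functional $f\in(\mathbb{R}^d)^*$ whose projective kernel is disjoint from $\xi(\partial\Gamma)$; such $f$ exists by the affine chart hypothesis. Since the canonical projection $\pi:\mathbb{R}^d\setminus\{0\}\to\mathbb{P}(\mathbb{R}^d)$ restricts to a homeomorphism from $\{f=1\}$ onto $\mathbb{P}(\mathbb{R}^d)\setminus\mathbb{P}(\ker f)$, we obtain a unique continuous lift $\hat\xi:\partial\Gamma\to\{f=1\}$ of $\xi$. Next, let $K\subset\{f=1\}$ be the closed convex hull of $\hat\xi(\partial\Gamma)$; the assumption that $\xi(\partial\Gamma)$ spans $\mathbb{R}^d$ is equivalent to $\hat\xi(\partial\Gamma)$ affinely spanning $\{f=1\}$, so $K$ has nonempty interior $K^\circ$ in that hyperplane. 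I would then set $\Omega:=\pi(K^\circ)$; it is open, convex, and bounded inside the affine chart $\mathbb{P}(\mathbb{R}^d)\setminus\mathbb{P}(\ker f)$, hence properly convex in $\mathbb{P}(\mathbb{R}^d)$.

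To verify $\rho(\Gamma)$-invariance, fix $\gamma\in\Gamma$. Equivariance of $\xi$ produces a continuous nonvanishing function $\lambda_\gamma:\partial\Gamma\to\mathbb{R}^{*}$ with $\rho(\gamma)\hat\xi(x)=\lambda_\gamma(x)\,\hat\xi(\gamma\cdot x)$ for every $x$, and connectedness of $\partial\Gamma$ forces $\lambda_\gamma$ to have a single sign $\epsilon(\gamma)\in\{\pm 1\}$. Consequently $\rho(\gamma)$ sends the closed convex cone $\mathbb{R}_{\geq 0}\cdot K$ to $\epsilon(\gamma)\cdot\mathbb{R}_{\geq 0}\cdot K$, which upon projectivizing gives back $\Omega$ in both cases, so $\rho(\gamma)(\Omega)=\Omega$.

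The main obstacle is precisely this invariance step: one cannot expect to choose the affine chart (equivalently, the functional $f$) to be $\rho(\Gamma)$-invariant, so the convex hull taken inside a fixed chart is a priori only well-defined up to the $\pm 1$ ambiguity inherent in lifting a connected projective set to $\mathbb{R}^d\setminus\{0\}$. Connectedness of $\partial\Gamma$ is what reduces this ambiguity to a single overall sign that becomes invisible after projectivization, and it is the only place the hypothesis on $\partial\Gamma$ is genuinely needed.
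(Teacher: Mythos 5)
Your proposal is correct and follows essentially the same route as the paper: lift $\xi(\partial\Gamma)$ into a fixed affine chart, take the convex hull (with nonempty interior by the spanning hypothesis), and use connectedness of $\partial\Gamma$ to show the sign ambiguity in the lift is a single constant $\epsilon(\gamma)$, so the hull is preserved up to a sign that disappears after projectivizing. Your cone formulation merely repackages the paper's explicit computation with the coefficients $B_i(x)$ and $s_i=t_iB_i(x)/B(x)$, where those $B_i$ play exactly the role of your $\lambda_\gamma$.
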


\begin{proof} 
By assumption $S=\xi(\partial\Gamma)$ is a connected subset of an affine chart
$A \subset \mathbb{P}(\mathbb{R}^{d})$. We may assume that
$$A=\left \{ \left [ x_1:...:x_d \right ]: x_1 \neq 0  \right \}$$
so every point in $A$ has a unique representative of the form $[1:u]$ where $u\in\{0\}\times \mathbb R^{d-1}$.
Then 
$$CH(S)=\left\{[1:u] \ \Big| \ u=\sum_{i=1}^{d}t_i u_i, \  \sum_{i=1}^{d} t_i=1, \ t_i\ge 0\ {\rm and}\ [1:u_{i}] \in S\ {\rm for}\ {\rm all}\ i \right\}$$
is the convex hull of $S$ in the affine chart $A$. Since  $S$ spans $\mathbb R^d$,
$CH(S)$ has non-empty interior.

It only remains to show that $CH(S)$ is $\Gamma$-invariant. Suppose that $\gamma\in\rho(\Gamma)$.
If $[1:u]\in S$, then $\gamma([1:u])=[\gamma(e_1)+\gamma(u)]\in S\subset A$, so
$$\left \langle \gamma(e_1)+\gamma(u),e_1 \right \rangle \neq 0.$$
Since $S$ is connected, $\left \langle \gamma(e_1)+\gamma(u),e_1 \right \rangle$ always has the same sign if $[1:u]\in S$.
If 
$$x=\sum_{i=1}^d t_i[1:u_i]\in CH(S)$$
 let
$$B_i(x)=\left\langle\gamma(e_1)+\gamma(u_i),e_1 \right \rangle\ \ {\rm and}\ \ B(x)=\sum_{i=1}^d B_i(x),$$ 
so
\begin{eqnarray*}
\gamma(x) &= &\gamma\left(\sum_{i=1}^{d}t_i[1,u_i] \right)=\Big[\gamma(e_1)+\sum_{i=1}^{d}t_i\gamma(u_i) \Big]=
\Big[B(x)e_1+\sum_{i=1}^{d}t_i\Big(\gamma(u_i)+\gamma(e_1)-B_i(x) e_1 \Big) \Big]\\
&=&\left[ e_1+\sum_{i=1}^{d} \frac{t_iB_i(x)}{B(x)}\left( \frac{1}{B_i(x)}\left(\gamma(u_i)+\gamma(e_1)-B_i(x) e_1\right) \right) \right]\\
&=&\sum_{i=1}^d s_i \left[1:\frac{\gamma(e_i)+\gamma(u_i)-B_i(x) e_1}{B_i(x)}\right]=\left[\sum_{i=1}^d s_i \gamma([1:u_i])\right]\\
\end{eqnarray*}
where
$$s_{i}= \frac{t_iB_i(x)}{B(x)}.$$
Notice that $s_i\ge 0$  for all $i$, since $t_i\ge 0$ and $B_i(x)$ and $B(x)$ have the same sign.
Therefore, $\gamma(x)\in CH(S)$, so $CH(S)$ is $\Gamma$-invariant as required.
\end{proof}

We combine Proposition \ref{Zimmerconnected} and Proposition \ref{Omegastructure} to show that an
irreducible projective Anosov surface group whose limit set lies in an affine chart preserves
a properly convex domain of the form given by Proposition \ref{Omegastructure}.

\begin{corollary}
\label{Omegasurface}
Suppose that $\Gamma$ is a surface group and $\rho:\Gamma\to \mathsf{SL}(d,\mathbb R)$ is a projective Anosov
representation. If $\xi_\rho^1(\partial \Gamma)$ spans $\mathbb R^d$ and lies inside
an affine chart for $\mathbb R^d$, then $\rho(\Gamma)$ acts convex cocompactly on the properly convex domain
$$\Omega=\mathbb P(\mathbb R^d)-\bigcup_{x\in\partial\Gamma}\mathbb P(\xi_\rho^{d-1}(x))$$
and $\xi^1_\rho(\partial\Gamma)\subset \partial\Omega$.
\end{corollary}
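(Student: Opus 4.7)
The plan is to verify the hypotheses of Proposition \ref{Omegastructure} using Proposition \ref{Zimmerconnected} as an intermediate step. Since $\Gamma$ is a surface group, its Gromov boundary $\partial\Gamma$ is homeomorphic to $S^{1}$ and in particular connected, which is one of the hypotheses of both propositions.

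First I apply Proposition \ref{Zimmerconnected} with $\xi=\xi_\rho^1$; the hypotheses that $\xi_\rho^1(\partial\Gamma)$ spans $\mathbb{R}^d$ and lies in an affine chart produce a properly convex $\rho(\Gamma)$-invariant open set $\Omega_0\subset\mathbb{P}(\mathbb{R}^d)$, namely the interior of $CH(\xi_\rho^1(\partial\Gamma))$ in the given affine chart. To invoke Proposition \ref{Omegastructure} it remains to establish that $\rho$ is irreducible.

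For irreducibility I combine the minimality of the $\Gamma$-action on $\partial\Gamma$ with the analysis of the semisimplification. Since any hyperbolic group acts minimally on its Gromov boundary, for every proper $\rho(\Gamma)$-invariant subspace $V\subsetneq\mathbb{R}^d$ the set $(\xi_\rho^1)^{-1}(\mathbb{P}(V))$ is closed and $\Gamma$-invariant, hence either empty or all of $\partial\Gamma$; the latter contradicts the assumption that $\xi_\rho^1(\partial\Gamma)$ spans $\mathbb{R}^d$. Thus $\xi_\rho^1(\partial\Gamma)\cap\mathbb{P}(V)=\emptyset$ for every proper invariant $V$. Combining this with the affine chart hypothesis and with Lemma \ref{semisimpleproperties} and Corollary \ref{reducible} applied to the semisimplification $\rho^{ss}$, I expect to deduce that $\xi^1_{\rho^{ss}}(\partial\Gamma)$ also spans $\mathbb{R}^d$; Proposition \ref{limitmapspans} then forces $\rho^{ss}$, and hence $\rho$, to be irreducible.

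Once irreducibility is established the hypotheses of Proposition \ref{Omegastructure} are in place, and the conclusion follows: $\rho(\Gamma)$ acts convex cocompactly on $\Omega=\mathbb{P}(\mathbb{R}^d)-\bigcup_{x\in\partial\Gamma}\mathbb{P}(\xi_\rho^{d-1}(x))$ with $\xi_\rho^1(\partial\Gamma)\subset\partial\Omega$. The main obstacle is the irreducibility step, since $\xi_\rho^1$ and $\xi^1_{\rho^{ss}}$ can genuinely differ when $\rho$ is reducible; controlling this discrepancy requires using both the spanning assumption and the affine chart assumption together, rather than either in isolation.
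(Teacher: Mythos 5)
Your overall architecture is exactly the paper's: Corollary \ref{Omegasurface} is presented there as the immediate combination of Proposition \ref{Zimmerconnected} (connectedness of $\partial\Gamma\cong S^1$ together with the spanning and affine-chart hypotheses give a $\rho(\Gamma)$-invariant properly convex domain) with Proposition \ref{Omegastructure}, and this is also your plan. The problem is your interpolated irreducibility step, and it is a genuine gap rather than a routine verification. By Corollary \ref{reducible}, if $\rho$ is reducible then $\xi^1_{\rho^{ss}}(\partial\Gamma)$ spans a \emph{proper} subspace of $\mathbb{R}^d$; so the statement you say you ``expect to deduce'' --- that $\xi^1_{\rho^{ss}}(\partial\Gamma)$ spans $\mathbb{R}^d$ --- is not an intermediate step but is literally equivalent to the irreducibility you are trying to establish, and you give no argument for it. The minimality observation you do prove (that $\xi^1_\rho(\partial\Gamma)$ is disjoint from $\mathbb{P}(V)$ for every proper invariant subspace $V$) does not rule out reducibility: for example, a non-semisimple representation of the form $\begin{pmatrix} 1 & c \\ 0 & \rho_0 \end{pmatrix}$ with $\rho_0$ Fuchsian and $c$ a non-coboundary cocycle is projective Anosov, its limit map avoids the invariant line, and (for such $c$) its image spans $\mathbb{R}^3$. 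So spanning plus minimality alone cannot yield irreducibility; any honest argument must make the affine-chart hypothesis do real work, and your sketch never actually brings it into the comparison between $\xi^1_\rho$ and $\xi^1_{\rho^{ss}}$ --- which, as you yourself flag at the end, is precisely where the difficulty sits.

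That said, you have put your finger on a hypothesis the paper passes over silently: Proposition \ref{Omegastructure} is stated for irreducible representations, the sentence introducing the corollary speaks of an \emph{irreducible} projective Anosov surface group, and yet the corollary's hypotheses only require that $\xi^1_\rho(\partial\Gamma)$ span and lie in an affine chart, with no irreducibility argument supplied. In the paper's application in the Borel Anosov setting (Theorem \ref{borel4}) irreducibility is verified beforehand via Proposition \ref{Borel4irreducible}. So to complete your write-up you should either add irreducibility to the hypotheses and check it at each point of use, or genuinely derive irreducibility from the spanning and affine-chart assumptions; the latter requires a new argument, not the circular appeal to the span of $\xi^1_{\rho^{ss}}$.
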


\section{Anosov representations into $\mathsf{SL}(3,\mathbb R)$}

We first characterize Anosov representations into $\mathsf{SL}(3,\mathbb R)$.

\medskip\noindent
{\bf Theorem \ref{sl3rcase}:} {\em
If $\Gamma$ is a torsion-free hyperbolic group and $\rho:\Gamma \to \mathsf{SL}(3,\mathbb R)$ is an Anosov representation,
then  $\Gamma$ is either a free group or a surface group.
}

\begin{proof}
Notice that an Anosov representation into $\mathsf{SL}(3,\mathbb R)$ is, by  definition, projective Anosov.
If $\Gamma$ is not free or a surface group, let $\Gamma=\Gamma_1 \ast \cdots\ast \Gamma_{r}\ast F_{s}$ 
be the free product decomposition of $\Gamma$ where each $\Gamma_i$ is one-ended, $r\ge 1$ and $s \geqslant 0$. 
Recall that $\Gamma_1$ is quasiconvex in $\Gamma$ (see  \cite[Prop. 1.2]{bowditch-cutpoints}), 
so $\rho|_{\Gamma_1}$ is projective Anosov \cite[Lemma 2.3]{CLS}.

If $\rho|_{\Gamma_1}$ is reducible,  then Corollary \ref{reducible} implies that there exists a projective
Anosov representation of $\Gamma_1$ into $\mathsf{SL}^\pm(W)$ where $W$ is a proper subspace of $\mathbb R^3$.
However, every torsion-free discrete subgroup of $\mathsf{SL}^\pm(W)$ is either a free group or a surface group
if $W$ is one or two dimensional.

If $\rho|_{\Gamma_1}$ is irreducible and $\Gamma_1$ is not a surface group, then 
Theorem \ref{Zimmer}  implies that $\rho(\Gamma)$ acts convex cocompactly on a properly convex domain 
$\Omega\subset \mathbb P(\mathbb R^3)$, but then
$\Gamma$ is isomorphic to the fundamental group of the surface $\Omega/\rho(\Gamma)$ which is  a contradiction.
We conclude that $\Gamma_1$ is a surface group.

Suppose that $\Gamma_1$ has infinite index in $\Gamma$ (i.e. suppose that there is more than one factor).
Then $\partial\Gamma_1$ is a proper subset of $\partial \Gamma$. Let $z\in\partial\Gamma-\partial\Gamma_1$.
Notice that $\xi_\rho(\partial \Gamma_1)$ is a compact
subset of the affine chart \hbox{$\mathbb{P}(\mathbb{R}^3-\xi^2_\rho(z))$}. 
Since $\xi_\rho^1(\partial\Gamma_1)$ is an embedded circle in $A$,
$\xi_\rho^1(\partial\Gamma_1)$ must span $\mathbb R^3$ (otherwise, $\xi_\rho^1(\partial\Gamma)$
would be contained in the intersection of a projective line with $A$). Corollary \ref{Omegasurface}
then implies that $\rho(\Gamma_1)$ acts cocompactly on 
$$\Omega=\mathbb{P}(\mathbb{R}^3)-\bigcup_{x\in \partial\Gamma_1}\mathbb{P}(\xi^2_\rho(x))$$
and that $\partial\Omega=\xi^1_\rho(\partial\Gamma_1)$.
In particular, either $\xi^1_\rho(z)$ is  contained in $\xi^2_\rho(x)$ for some $x\in\partial\Gamma_1$, which violates transversality,
or $\xi_\rho^1(z)$ is contained in $\Omega$ which would imply that  $\xi^2_\rho(z)$ must
intersects $\partial\Omega=\xi_\rho^1(\partial\Gamma_1)$,
which agains violates transversality. This final contradiction completes the proof.
\end{proof}

\section{Projective Anosov representations into $\mathsf{SL}(4,\mathbb R)$}

The inclusion of any convex cocompact subgroup of $\mathsf{SO}(3,1)$ into $\mathsf{SL}(4,\mathbb R)$ 
is a projective Anosov representation (see \cite[Sec. 6.1]{guichard-wienhard}). We use work of
Danciger-Gu\'erituad-Kassel \cite{DGK} and Zimmer \cite{zimmer} and the Geometrization
Theorem to show that these are the only groups admitting projective Anosov representations into $\mathsf{SL}(4,\mathbb R)$.

\medskip\noindent{\bf Theorem \ref{sl4rcase}:}
{\em If $\Gamma$ is a torsion-free  hyperbolic group and  $\rho:\Gamma\to  \mathsf{SL}(4,\mathbb R)$ is a projective Anosov representation, 
then $\Gamma$ is isomorphic to a convex cocompact subgroup of $\mathsf{PO}(3,1)$.
}

\medskip\noindent
{\em Proof of Theorem \ref{sl4rcase}:}
%
We decompose $\Gamma=\Gamma_1*\cdots*\Gamma_p*F_r$ where each $\Gamma_i$ is one-ended.
Notice that $F_r$ is the fundamental group of a handlebody of genus $r$, which
is a compact irreducible 3-manifold with non-empty boundary.
If $\Gamma_i$ is a surface group, then it is the fundamental group of an  interval bundle $M_i$, so $M_i$ is irreducible and has 
non-empty boundary.

Now suppose that $\Gamma_i$ is not a surface group.
Since $\Gamma_i$ is a quasiconvex subgroup of $\Gamma$ (see  \cite[Prop. 1.2]{bowditch-cutpoints}), 
$\rho_i=\rho|_{\Gamma_i}$ is also projective Anosov (see \cite[Lemma 2.3]{CLS}).
If $\rho_i$ is reducible,
then Corollary \ref{reducible} gives a proper subspace $W$ of $\mathbb R^4$ and an irreducible projective Anosov
representation 
$$\hat\rho_i=(\rho_i)_W^{ss}:\Gamma_i\to\mathsf{SL}^\pm(W).$$
If $W$ is one-dimensional, then $\Gamma_i$ is not one-ended. If $W$ is two-dimensional, then
$\hat\rho_i$ is a Fuchsian representation, so $\Gamma_i$ is either a free group or a surface group, both of  which
have been disallowed. If $W$ is 3-dimensional, then Theorem \ref{sl3rcase} again implies that 
$\Gamma_i$ is either a free group or a surface group, which is impossible. Therefore, $\rho_i$ is
irreducible.

Since $\rho_i$ is irreducible and $\Gamma_i$ is one-ended and not a surface group, 
Theorem \ref{Omegastructure} implies that $\rho(\Gamma_i)$
acts convex cocompactly on a properly convex domain 
$$\Omega_i=\mathbb P\left(\mathbb R^4-\bigcup_{x\in\partial\Gamma_i}\xi_\rho^{3}(x)\right)\subset\mathbb P(\mathbb R^4).$$
In particular, $\Gamma_i$ is the fundamental group of an irreducible 3-manifold $N_i=\Omega_i/\Gamma_i$.  

If $N_i$ is a closed $3$-manifold, then $\rho_i$ is a Benoist representation. If $\Gamma\ne \Gamma_i$, then there
exists $z\in\partial\Gamma-\partial\Gamma_i$. By transversality, $\xi_\rho^1(z)$ cannot lie in $\mathbb P(\mathbb R^4)-\Omega_i$.
On the other hand, if $z\in\Omega_i$, then $\xi_\rho^3(z)$ intersects $\partial\Omega_i=\xi_\rho^1(\partial\Gamma_i)$,
which also violates transversality. Therefore, if $N_i$ is closed, $\Gamma=\Gamma_i$,
$\rho$ is a Benoist representation and the Geometrization
Theorem \cite{morgan-tian} implies that  $\Gamma$ is isomorphic to a convex cocompact subgroup of $\mathsf{PO}(3,1))$.

If $N_i$ is not a closed $3$-manifold, then
the Scott core theorem \cite{scott} implies that $N_i$
contains a compact, irreducible submanifold $M_i$ with non-empty boundary and fundamental group $\Gamma_i$.

Therefore, if $\rho$ is not a Benoist representation, $\Gamma$ is the fundamental group of the boundary connected sum 
$M$ of the $M_i$ and a handlebody of genus $r$. Since $M$ is irreducible, $\pi_1(M)$ is word hyperbolic, torsion-free and infinite, 
and $M$ has non-empty boundary,
it follows from Thurston's original Geometrization Theorem (see Morgan \cite{morgan})
that the interior of $M$ admits a convex cocompact hyperbolic
structure, so $\Gamma$ is isomorphic to a convex cocompact subgroup of $\mathsf{PO}(3,1)$.
\eproof

\section{Cohomological resrictions on $P_k$-Anosov representations}

In this section, we place cohomological restrictions on $P_k$-Anosov subgroups of $\mathsf{SL}(d,\mathbb R)$.  
Benoist \cite[Prop 1.3]{benoist-divisible3} previously proved that a discrete hyperbolic subgroup of $\mathsf{SL}(d,\mathbb R)$
consisting entirely of positively semi-proximal elements has virtual cohomological dimension at most $d-1$, with equality if and only
if the inclusion map is a Benoist representation. Guichard and Wienhard \cite[Prop. 8.3]{guichard-wienhard} obtained
bounds on cohomological dimension for various classes of Anosov representations into specified Lie subgroups of
$\mathsf{SL}(d,\mathbb R)$.

\medskip\noindent
{\bf Theorem \ref{cohomdimd-k}:} {\em
Suppose $\Gamma$ is a torsion-free hyperbolic group and $\rho:\Gamma\to  \mathsf{SL}(d,\mathbb R)$ is \hbox{$P_k$-Anosov.}
\begin{enumerate}
\item
If $(d,k)$ is not  $(2,1)$, $(4,2)$, $(8,4)$ or $(16,8)$, 
then $\Gamma$ has  cohomological dimension at most $d-k$.
\item
If $(d,k)$ is $(2,1)$, $(4,2)$, $(8,4)$ or $(16,8)$, then $\Gamma$ has virtual cohomological dimension at most $d-k+1$. 
Moreover, if $\Gamma$ has  cohomological dimension $d-k+1$ and $(d,k)$ is $(4,2)$, $(8,4)$ or $(16,8)$,
then $\partial \Gamma$ is homeomorphic to $S^{d-k}$ and $\rho$ is not  projective Anosov.
\end{enumerate}
}

\medskip\noindent
{\em Proof of Theorem \ref{cohomdimd-k}:}
If $d=2$, then $k=1$ and $\rho:\Gamma\to \mathsf{SL}(2,\mathbb R)$ is Fuchsian, so either $\Gamma$
has cohomological dimension $d-k=1$ (if $\Gamma$ is free) or $2$ (if $\Gamma$ is a surface group),
which corresponds to the first exceptional case in item (2).
Theorem \ref{sl3rcase} handles the case where $d=3$.

Now suppose that $\frac{d}{2}\ge k\ge 1$ and $d>3$. Let $m$ be the topological dimension of $\partial\Gamma$.
Fix $x_0\in \partial\Gamma$ and a $(d-k+1)$-plane $V$ in $ \mathbb R^d$ which contains $\xi^{d-k}(x_0)$. We define a map 
$$F:\partial \Gamma -\{x_0\} \to \mathbb P(V-\xi^{k}(x_0))$$
by letting $F(y)$ be the line which is the intersection of $\xi^k(y)$ with $V$.
(Transversality implies that the intersection of $\xi^k(y)$ and $\xi^{d-k}(x_0)$ is trivial if $y\ne x_0$,
so the intersection of $\xi^k(y)$ with $V$ must be a line.)

One sees that $F$ is injective, since if $x\ne y\in\partial\Gamma$, then $\xi^k(x)$ and $\xi^k(y)$
have trivial intersection (by transversality). Moreover, $F$ is proper, since if $\{y_n\}$ is a sequence in
$\partial\Gamma-\{x_0\}$ converging to $x_0$, then, by continuity of limit maps, $\{\xi_\rho^k(y_n)\}$
is converging to $\xi_\rho^k(x_0)$, so $\{F(y_n)\}$ leaves every compact subset of  $\mathbb P(V-\xi^{k}(x_0))$.
Therefore, $F$ is an embedding.
Since $\partial\Gamma-\{x_0\}$ embeds in a $(d-k)$-manifold, $\partial \Gamma$
has topological dimension at most $d-k$ (see \cite[Thm III.1]{hurewicz}).

Now suppose that  $\partial\Gamma$ has topological dimension exactly $d-k$. Then,
$F(\partial\Gamma)$ contains an open subset of $\mathbb P(V)$ (see \cite[Thm. IV.3/Cor. 1]{hurewicz}).
So, since $\partial\Gamma$ has a manifold point, $\partial\Gamma$ is homeomorphic to $S^{d-k}$,
by
Kapovich-Benakli \cite[Th. 4.4]{benakli-kapovich}.
Let $p:E\to \partial\Gamma$ be the fibre bundle provided by Lemma \ref{fibrebundle}
where 
$$E=\bigcup_{x\in\partial\Gamma} S(\xi^k(x))\subset S(\mathbb R^d).$$
Then $E$ has topological dimension $(d-k)+k-1=d-1$. Since $\partial\Gamma$ is homeomorphic to $S^{d-k}$,
$E$ is a closed submanifold of $S(\mathbb R^d)\cong S^{d-1}$ of dimension $d-1$, which implies that $E=S(\mathbb R^d)$.
However, by the classification of sphere fibrations (\cite{adams}), 
this is only possible if $(d-1,k-1)$ is $(3,1)$, $(7,3)$ or $(15,7)$.
Moreover, in these cases, $\rho$ cannot be projective Anosov, since if $\rho$ is projective Anosov,
$\xi_\rho^1:\partial\Gamma\to \mathbb P(\mathbb R^{2k})$ lifts to 
a section $s:\partial\Gamma\to E$ of $p$,
which is impossible  (since $p\circ s=id$, $p_*\circ s_*$ is the identity
map on $H_{d-k}(S^{d-k})\cong \mathbb Z$, while, $p_*$ is the zero map
on $H_{d-k}(E)$.)

If $\partial\Gamma$ has topological dimension  at most $d-k-1$, then, by Bestvina-Mess \cite[Cor. 1.4]{bestvina-mess}, $\Gamma$
has cohomological dimension at most $d-k$. If $\partial\Gamma$ has topological dimension $d-k$, then
$\Gamma$ has cohomological dimension $d-k+1$, again by \cite[Cor. 1.4]{bestvina-mess}, and, by the previous paragraph,
$(d,k)$ is $(2,1)$, $(4,2)$, $(8,4)$ or $(16,8)$, $\partial\Gamma\cong S^{d-k}$
and $\rho$ is not projective Anosov if $(d,k)$ is $(4,2)$, $(8,4)$ or $(16,8)$.
\eproof

\section{Benoist representations}

The prototypical example of a Benoist representation is  the inclusion of a cocompact discrete subgroup of
$\mathsf{PO}(n,1)$ into $\mathsf{SL}(n+1,\mathbb R)$. The image acts convex cocompactly on a round disk in $\mathbb P(\mathbb R^{n+1})$
which is the Beltrami-Klein model for $\mathbb H^n$. Johnson and Millson \cite{johnson-millson} showed
that although the inclusion map is rigid in $\mathsf{PO}(n,1)$ if $n\ge 3$, it often admit non-trivial deformations
in $\mathsf{SL}(n+1,\mathbb R)$. Benoist \cite{benoist-divisible1,benoist-divisible2,benoist-divisible3} showed that these deformations are always Benoist representations
and developed an extensive theory of groups of projective automorphisms preserving properly convex subsets of 
$\mathbb P(\mathbb R^d)$.

If $\rho$ is a Benoist representation, then $\rho(\Gamma)$ has virtual cohomological dimension $d-1$, so,
Theorem \ref{cohomdimd-k}  immediately implies that Benoist representations are only projective Anosov.

\medskip\noindent
{\bf Corollary \ref{benoistnotpk}:} {\em
If  $\rho:\Gamma\to \mathsf{SL}(d,\mathbb R)$ is a Benoist representation, and $k$ is an integer such that
$\frac{d}{2}\ge k\ge 2$, then $\rho$ is not $P_k$-Anosov. }

\medskip

We characterize Benoist representations in terms of the cohomological dimension of their domain groups.

\medskip\noindent
{\bf  Theorem \ref{cohomdimd}:} {\em
If $d\ge 4$, an Anosov representation $\rho:\Gamma\to\mathsf{SL}(d,\mathbb R)$ of a torsion-free hyperbolic group $\Gamma$
is a Benoist representation if and only if 
$\Gamma$ has cohomological dimension $d-1$.}

\medskip\noindent
{\em Proof of Theorem \ref{cohomdimd}:}
It is immediate from the definition that if $\Gamma$ is torsion-free and \hbox{$\rho:\Gamma\to\mathsf{SL}(d,\mathbb R)$}
is a Benoist representation, then $\Gamma$ has cohomological dimension $d-1$.

Now suppose that $\Gamma$ is a torsion-free hyperbolic group of cohomological dimension $d-1$ and 
$\rho:\Gamma\to\mathsf{SL}(d,\mathbb R)$ is an Anosov representation.
Notice that, by Theorem \ref{cohomdimd-k}, $\rho$ cannot be $P_k$-Anosov for any $k\ge 2$,
so $\rho$ must be projective Anosov.

There exists a free decomposition 
$\Gamma=\Gamma_1 \ast\cdots\ast \Gamma_{s}\ast F_{r}$ where each $\Gamma_i$ is one-ended.
Since  the cohomological dimension of $\Gamma$ is the maximum of the cohomological dimensions of its
one-ended factors, we may assume that $\Gamma_1$ has cohomological dimension  $d-1$.
Since $\Gamma_1$ is a quasiconvex subgroup of $\Gamma$ (see \cite[Prop. 1.2]{bowditch-cutpoints}), the restriction
$\rho_1=\rho|_{\Gamma_1}$ of $\rho$ to $\Gamma_1$ is
projective Anosov \cite[Lemma 2.3]{CLS}. Moreover, $\xi^1_{\rho_1}$ is the restriction of $\xi^1_\rho$ to $\partial\Gamma_1\subset\partial\Gamma$.

We first claim that  $\rho_1$ is irreducible. If not,
Corollary \ref{reducible} provides  a proper $\rho(\Gamma_1)$-invariant subspace $W$ of $\mathbb R^d$ and
an irreducible projective Anosov representation \hbox{$(\rho_1)^{ss}_W:\Gamma\to\mathsf{SL}^\pm(W)$}.  
However, Theorem \ref{cohomdimd-k} would then imply that $\Gamma$ has
cohomological dimension at most \hbox{${\rm dim}(W)-1\le d-2$}, which would be a contradiction.

Since $\rho_1$ is irreducible and $\Gamma$ is one-ended and not a surface group, 
Theorem \ref{Zimmer} implies that there exists a properly convex open domain 
$\Omega \subset \mathbb{RP}^d$ preserved by $\rho(\Gamma_1)$.  Proposition \ref{Omegastructure} implies
that we may assume that $\Omega$ has the form
$$\Omega=\mathbb{P}(\mathbb{R}^{d})-\bigcup_{x \in \partial\Gamma_1}\mathbb{P}(\xi^{d-1}_{\rho}(x)).$$ 
Since $\Omega/\rho(\Gamma_1)$ is an aspherical $(d-1)$-manifold and $\Gamma_1$ has cohomological
dimension $d-1$, $\Omega/\rho(\Gamma_1)$ must be a closed manifold. Therefore, by Benoist \cite[Thm 1.1]{benoist-divisible1},
$\Omega$ is strictly convex, so $\rho_1$ is a 
Benoist representation.

If there existed another one-ended or cyclic factor in the free decomposition of $\Gamma$, then there 
would exist an infinite order element $t$ in the other factor. In particular, $t^+$ does not lie in $\partial\Gamma_1$.
Since, by transversality, $\xi_\rho^1(t^+)$ cannot intersect $\xi^{d-1}_{\rho}(x)$ for any $x$ in $\partial\Gamma_1$,
$\xi_\rho^1(t^+)$ must lie in $\Omega$. However, if $\xi_\rho^1(t^{+})\in\Omega$, then the hyperplane $\xi^{d-1}_{\rho}(t^{+})$ must 
intersect the boundary $\partial \Omega$, which again violates transversality of the limit maps.
Therefore, $\Gamma=\Gamma_1$ and $\rho$ is a Benoist representation.
\eproof

\section{Borel Anosov representations}

The only known examples of Borel Anosov representations  into $\mathsf{SL}(d,\mathbb R)$ have domain groups which contain
finite index subgroups which are either free or surface groups. Andres Sambarino asked whether this is always the case.

\medskip\noindent
{\bf Sambarino's Question:} {\em If a torsion-free hyperbolic group  admits a Borel Anosov representations into $\mathsf{SL}(d,\mathbb R)$, must it 
be either a free group or a surface group?}

\medskip

We do know, by Theorem \ref{cohomdimd-k}, that Borel Anosov representations must have ``small''  cohomological dimension.

\begin{corollary}
Suppose that $\Gamma$ is a torsion-free hyperbolic group, $d\ge 3$ and $\rho:\Gamma\to \mathsf{SL}(d,\mathbb R)$ is Borel Anosov. 
\begin{enumerate}
\item
If $d$ is odd, then $\Gamma$ has cohomological dimension at most $\frac{d+1}{2}$.
\item
If $d$ is even, then $\Gamma$ has  cohomological dimension at most $\frac{d}{2}$. 
\end{enumerate}
\end{corollary}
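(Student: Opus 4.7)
The plan is to deduce this corollary from Theorem \ref{cohomdimd-k} by applying it with the largest admissible index $k$, and then to use the fact that Borel Anosov implies projective Anosov in order to rule out the exceptional cases when $d$ is even.

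First, suppose $d$ is odd. Then $k=(d-1)/2$ is the largest integer with $k\le d/2$, and since the exceptional pairs $(2,1)$, $(4,2)$, $(8,4)$, $(16,8)$ all have $d$ even, $(d,k)$ is never exceptional. Applying Theorem \ref{cohomdimd-k}(1) to the $P_k$-Anosov representation $\rho$ immediately yields $\mathrm{cd}(\Gamma)\le d-k=(d+1)/2$, which gives (1).

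Now suppose $d$ is even with $d\ge 4$, and take $k=d/2$. If $(d,k)\notin\{(4,2),(8,4),(16,8)\}$, then Theorem \ref{cohomdimd-k}(1) directly gives $\mathrm{cd}(\Gamma)\le d-k=d/2$. In the remaining cases $(d,k)\in\{(4,2),(8,4),(16,8)\}$, Theorem \ref{cohomdimd-k}(2) only yields $\mathrm{cd}(\Gamma)\le d-k+1=d/2+1$; however, it also asserts that whenever this bound is achieved and $k\ne 1$, the representation $\rho$ fails to be projective Anosov. Since $\rho$ is Borel Anosov it is in particular $P_1$-Anosov, i.e.\ projective Anosov, and $k=d/2\ge 2$ in each of these three cases. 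Thus $\mathrm{cd}(\Gamma)=d/2+1$ is impossible, so $\mathrm{cd}(\Gamma)\le d/2$, giving (2).

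The only point requiring real care is the handling of the three exceptional pairs in the even case, where the raw bound from Theorem \ref{cohomdimd-k}(2) is one too large for our purposes; this is precisely where the Borel hypothesis (and the stipulation $d\ge 3$, which removes $(2,1)$ from consideration) becomes essential, since it forces $\rho$ to be projective Anosov and invokes the second clause of Theorem \ref{cohomdimd-k}(2) to exclude the extremal value of the cohomological dimension.
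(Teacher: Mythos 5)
Your proposal is correct and is essentially the argument the paper intends: the corollary is stated as an immediate consequence of Theorem \ref{cohomdimd-k}, applied with the maximal index $k=\lfloor d/2\rfloor$, and your handling of the exceptional pairs $(4,2)$, $(8,4)$, $(16,8)$ via the clause that a group of cohomological dimension $d-k+1$ cannot carry a projective Anosov (hence not a Borel Anosov) representation is exactly the point that makes the even case work. Nothing is missing.
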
 

Theorem \ref{sl3rcase} answers the question in the positive when $d=3$. In this section, we  handle the
case when $d=4$. We first observe that every Borel Anosov representation of a surface group into $\mathsf{SL}(d,\mathbb R)$ is irreducible
.

\begin{proposition} 
\label{Borel4irreducible}
If $\Gamma$ is a surface group and $\rho:\Gamma \to \mathsf{SL}(4,\mathbb R)$ is Borel Anosov,
then $\rho$ is irreducible.
\end{proposition}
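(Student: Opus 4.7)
The plan is to suppose $\rho$ is reducible and derive a contradiction. By Lemma~\ref{semisimpleproperties} one may replace $\rho$ by its semisimplification and so assume $\rho$ itself is a direct sum of irreducible subrepresentations, giving (up to conjugation) one of the decomposition types $1+1+1+1$, $2+1+1$, $3+1$, or $2+2$ of $\mathbb{R}^{4}$. The fully diagonal case $1+1+1+1$ is immediate: $\rho$ then factors through the abelianization of $\Gamma$, so its kernel contains $[\Gamma,\Gamma]$, which is infinite for a surface group, contradicting the fact that Anosov representations have finite kernel.

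For the remaining cases I would analyze the continuous injective $2$-plane limit map $\xi_\rho^{2}\colon\partial\Gamma\to{\rm Gr}_{2}(\mathbb{R}^{4})$. By Proposition~\ref{limitmapspans} (applied to $\rho$ and to its dual $\rho^{*}$) the line map $\xi_\rho^{1}$ takes values in a single irreducible summand $V_{a}$ of dimension at least two, and at an attracting fixed point $\gamma^{+}$ the $2$-plane $\xi_\rho^{2}(\gamma^{+})$ is spanned by the top two eigenlines of $\rho(\gamma)$; its geometric form thus depends on which irreducible summand contributes the second-largest eigenvalue modulus. The key observation is that inverting $\gamma$ inverts all eigenvalues and hence \emph{swaps} this sub-case, while $\gamma^{-}=(\gamma^{-1})^{+}$ is again an attracting fixed point.

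For the case $3+1$, $\rho=\rho_{3}\oplus\chi$, the two sub-cases give $\xi_\rho^{2}(\gamma^{+})\subset V_{3}$ or $\xi_\rho^{2}(\gamma^{+})\supset V_{\chi}$, which lie in disjoint closed subvarieties of ${\rm Gr}_{2}(\mathbb{R}^{4})$; the preimages $S_{A},S_{B}\subset\partial\Gamma$ under $\xi_\rho^{2}$ are therefore closed and disjoint, and continuity combined with density of attracting fixed points gives $S_{A}\cup S_{B}=\partial\Gamma$, so connectedness of $\partial\Gamma\cong S^{1}$ forces one of them to be empty. Combined with the sub-case swap this is impossible unless $\mu_{2}(\rho_{3}(\gamma))=|\chi(\gamma)|$ for every $\gamma$, which gives $\lambda_{2}=\lambda_{3}$ and violates $P_{2}$-Anosov. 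For the case $2+1+1$, $\rho=\rho_{2}\oplus\chi_{1}\oplus\chi_{2}$, a similar sorting of eigenvalues shows $\lambda_{2}/\lambda_{3}=\max(|\chi_{1}|,|\chi_{2}|)/\min(|\chi_{1}|,|\chi_{2}|)$, so the $P_{2}$-Anosov gap forces the homomorphism $\psi:=\log|\chi_{1}/\chi_{2}|\colon\Gamma\to\mathbb{R}$ to grow at least linearly in word length; but $\psi$ vanishes on commutators of arbitrarily large word length, contradiction.

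The main obstacle is the case $2+2$, $\rho=\rho_{2}\oplus\rho_{2}'$ with both summands $2$-dimensional and irreducible. Consistency of $\xi_\rho^{1}\subset V_{2}$ for both $\rho(\gamma)$ and $\rho(\gamma^{-1})$ forces the nested arrangement
\[
|\mu_{1}(\rho_{2}(\gamma))|>|\mu_{1}(\rho_{2}'(\gamma))|>|\mu_{2}(\rho_{2}'(\gamma))|>|\mu_{2}(\rho_{2}(\gamma))|,
\]
so that both $\rho_{2}$ and $\rho_{2}'$ are (projectively) Fuchsian representations of $\Gamma$. Adding the exponential gap inequalities supplied by $P_{1}$- and $P_{3}$-Anosov yields
\[
\ell_{\rho_{2}}(\gamma)-\ell_{\rho_{2}'}(\gamma)\ge c\,\|\gamma\|
\]
for the hyperbolic translation lengths, with $c>0$. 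Since $\ell_{\rho_{2}}(\gamma)\le L\,\|\gamma\|$ by discreteness, this forces $\sup_{\gamma}\ell_{\rho_{2}'}(\gamma)/\ell_{\rho_{2}}(\gamma)<1$, contradicting Thurston's theorem that the asymmetric distance on Teichm\"uller space is non-negative. Invoking this Fuchsian length-spectrum rigidity is the essential ingredient beyond the eigenvalue combinatorics used in the other cases.
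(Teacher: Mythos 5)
Your proof is correct, and its skeleton --- semisimplify via Lemma \ref{semisimpleproperties}, then run a case analysis on the invariant decomposition of $\mathbb{R}^4$, with the crux being a pair of Fuchsian $2$-dimensional factors whose eigenvalues interleave as $\lambda_1,\lambda_4$ versus $\lambda_2,\lambda_3$ --- matches the paper's. The differences are in organization and in the finishing blows. The paper sorts cases by the dimension of $W=\langle\xi_\rho^1(\partial\Gamma)\rangle$ rather than by full decomposition type, which lets it absorb your $2{+}1{+}1$ and $2{+}2$ cases into a single argument: the normalized restriction to the complement of $W$ has eigenvalue gap $\lambda_2/\lambda_3$, hence is Fuchsian by the Kassel--Potrie criterion, which already kills the $2{+}1{+}1$ case (an abelian image cannot be Fuchsian) without your separate homomorphism-vanishing-on-commutators argument --- though yours is perfectly valid and arguably more elementary. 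In the $3{+}1$ case the paper argues more directly: the invariant line $V$ is an eigenline lying in $\xi_\rho^2(\gamma^+)$ or $\xi_\rho^2(\gamma^-)$ for every $\gamma$, and comparing two elements in distinct maximal cyclic subgroups violates transversality of $\xi_\rho^2$; your closed-disjoint-sets-plus-connectedness-plus-inversion-swap argument reaches the same contradiction with a bit more machinery. Finally, for the $2{+}2$ endgame the paper derives $\lambda_1(\rho_1(\gamma))>e^{s\|\gamma\|}\lambda_1(\rho_2(\gamma))$ and contradicts Patterson's theorem that both cocompact Fuchsian representations have entropy $1$, whereas you contradict Thurston's theorem that $\sup_\gamma \ell_{Y}(\gamma)/\ell_{X}(\gamma)\ge 1$ on Teichm\"uller space; these are closely related facts about Fuchsian length spectra and either citation closes the argument. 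One small point worth making explicit in your write-up: before speaking of $\ell_{\rho_2}$ and $\ell_{\rho_2'}$ as hyperbolic translation lengths you should normalize each block by its determinant (passing to a finite-index subgroup so the determinants are positive, as the paper does), since the blocks of $\rho(\gamma)$ need not individually lie in $\mathsf{SL}(2,\mathbb{R})$.
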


\begin{proof}{}
Suppose not. Then we may assume that $\rho$ is  a reducible, semisimple,  Borel Anosov,  representation,
since the semisimplification of $\rho$ remains reducible and Borel Anosov.
Let $W$ be the subspace spanned by $\xi_\rho^1(\partial \Gamma)$. Then the restriction of $\rho$ to $W$
is irreducible, by Proposition \ref{limitmapspans}. Let $V$ be the complementary subspace of $\mathbb R^4$ 
which is also preserved by $\rho(\Gamma)$.

The subspace $W$ cannot be 1-dimensional, since $\xi_\rho$ is injective.

If $W$ is three-dimensional, then $V$ is an eigenline of each $\rho(\gamma)$
so, for all $\gamma$, $V$ lies in either $\xi^2_\rho(\gamma^+)$ or in $\xi^{2}_\rho((\gamma^{-1})^+)$.
However, this is impossible since $\xi^2_\rho(\alpha^+)$ and $\xi^2_\rho(\beta^+)$ are transverse for all
$\alpha$ and $\beta$ in distinct maximal cyclic subgroups.

If $W$ is two-dimensional, then we may pass to a subgroup of index at most $4$, still called $\Gamma$,
so that $\rho(\gamma)|_V$ and $\rho(\gamma)|_W$ both have positive determinant for all $\gamma\in\Gamma$.
Let 
$$a(\gamma)=\sqrt{{\rm det}(\rho(\gamma)|_W)}$$
for all $\gamma\in\Gamma$ and define 
$\rho_1:\Gamma\to \mathsf{SL}(W)$ by $\rho_1(\gamma)=a(\gamma)^{-1}\rho(\gamma)|_W$
and $\rho_2:\Gamma\to \mathsf{SL}(V)$ by $\rho_2(\gamma)=a(\gamma)\rho(\gamma)|_V$. Since 
$$\frac{\lambda_1(\rho_1(\gamma))}{\lambda_2(\rho_1(\gamma))}=\frac{\lambda_1(\rho(\gamma))}{\lambda_4(\rho(\gamma))}\ \ \
{\rm and}\ \ \ \frac{\lambda_1(\rho_2(\gamma))}{\lambda_2(\rho_2(\gamma))}=\frac{\lambda_2(\rho(\gamma))}{\lambda_3(\rho(\gamma))}$$
for all $\gamma\in\Gamma$ and $\rho$ is Borel Anosov, we see that $\rho_1$ and $\rho_2$ are Fuchsian.

Since $\rho$ is projective Anosov, there exists $s>0$ so that 
$$\lambda_1(\rho(\gamma))>e^{s||\gamma||} \lambda_2(\rho(\gamma))$$
where $||\gamma||$ is the cyclically reduced word length of $\gamma$. Observe that
$$\lambda_1(\rho(\gamma))=a(\gamma)\lambda_1(\rho_1(\gamma))
\ \ \ {\rm and}\ \ \ \ \lambda_2(\rho(\gamma))=a(\gamma)^{-1}\lambda_1(\rho_2(\gamma))$$
for all $\gamma\in\Gamma$, so
$$\lambda_1(\rho_1(\gamma))>e^{s||\gamma||}a(\gamma)^{-2} \lambda_2(\rho(\gamma)).$$
Now, since $\lambda_1(\rho_i(\gamma^{-1}))=\lambda_1(\rho_i(\gamma))$ and $a(\gamma^{-1})=a(\gamma)^{-1}$,
we see that 
$$\lambda_1(\rho_1(\gamma))>e^{s||\gamma||} \lambda_2(\rho(\gamma))$$
for all $\gamma\in\Gamma$. 
However, this is impossible, since
$$h(\rho_i)=\lim\frac{1}{T}\log\ \#\{[\gamma]\in[\Gamma]\ |\ 2\log\lambda_1(\rho_i(\gamma))\le T\}=1$$
for both $i=1,2$ (see Patterson \cite{patterson}).
\end{proof}

We are now ready to answer Sambarino's question when $d=4$.

\medskip\noindent
{\bf Theorem \ref{borel4}:}
{\em If  $\Gamma$ is a torsion-free hyperbolic group and $\rho:\Gamma\to \mathsf{SL}(4,\mathbb R)$ is Borel Anosov,
then $\Gamma$ is either a surface group or a free group.}

\medskip\noindent 
{\em Proof of Theorem \ref{borel4}:}
By Theorem \ref{sl4rcase}, we know that $\Gamma$
is isomorphic to a convex cocompact subgroup $\Delta$ of $\mathsf{PO}(3,1)$. Moreover, by Theorem \ref{cohomdimd-k}, 
$\Gamma$ has cohomological dimension at most 2, so $\Delta$ is not cocompact. Therefore, if $\Gamma$ is not free or a surface group,
$\Gamma$ contains infinitely many quasiconvex surface subgroups with 
mutually disjoint boundaries in $\partial\Gamma$ (see Abikoff-Maskit \cite{abikoff-maskit}).

Let $H$ and $J$ be quasiconvex surface subgroups of $\Gamma$ so that $\partial H$ and $\partial J$ are disjoint in $\partial\Gamma$.
Choose $z\in\partial\Gamma-(\partial J\cup\partial H)$. By transversality, both 
$\xi_\rho^1(\partial H)$ and $\xi_\rho^1(\partial J)$ are disjoint from the projective plane $\mathbb P(\xi_\rho^3(z))$,
so are contained in the affine chart $A=\mathbb P(\mathbb R^4-\xi_\rho^3(z))$. 

Since $\rho|_H$ is irreducible, by Proposition \ref{Borel4irreducible},
Corollary \ref{Omegasurface} implies that
$$\Omega_H=\mathbb P\left(\mathbb R^4-\bigcup_{x\in\partial H}\xi_\rho^3(x)\right)$$ 
is a properly convex domain which is $\rho(H)$-invariant and $\xi_\rho^1(\partial H)\subset \partial\Omega_H$.
Let 
$$T_H=\mathbb P\left(\bigcup_{x\in\partial H}\xi_\rho^2(x)\right).$$ By transversality, $T_H$ is a disjoint union of projective lines,
so it is a $S^1$-bundle over the circle $\partial H$. It follows, that $T_H$ is a Klein bottle or a torus. Since the Klein bottle
does not embed in $\mathbb P(\mathbb R^4)$ (see \cite{bredon-wood}), 
$T_H$ is a torus. Notice that $T_H$ separates since $H^2(\mathbb P(\mathbb R^4))=0$.

If $x\in\partial H$, the projective line $\mathbb P(\xi_\rho^2(x))$  intersects the projective plane $\mathbb P(\xi_\rho^3(z))$
in exactly one point, so $C_H=T_H\cap \mathbb P(\xi_\rho^3(z))$ is a simple closed curve. Since, by transversality, $C_H$ is disjoint from
the projective line $\mathbb P(\xi_\rho^2(z))$ in $\mathbb P(\xi_\rho^3(z))$, $C_H$ bounds a disk $D_H$ in the disk
$\mathbb P(\xi_\rho^3(z))\setminus\mathbb P(\xi_\rho^2(z))$.
Notice that $D_H$ is unique, since the other component of $\mathbb P(\xi_\rho^3(z))-C_H$ is an open M\"obius band.

The boundary of the regular neighborhood of $D_H\cup T_H$ 
has a spherical component $S_H$ contained in $A$, which bounds a ball $B_H$ in $A$, since $A$ is irreducible. 
Therefore, $T_H$ bounds an open solid torus $V_H$ which contains $B_H$ and intersects $\mathbb P(\xi^3_\rho(z))$
exactly in $D_H$.
Since $\xi_\rho^1(\partial H)$ is homotopic to $C_H$ it  also bounds a disk in $V_H$. 
However, since $\xi_\rho^1(\partial H)$ is homotopically non-trivial in $T_H$,  it can't also bound a disk in  
$\mathbb P(\mathbb R^4)-V_H$.  (If it bounds a disk both in $V_H$ and in its complement, then the sphere $S_H$ made from
the two disks intersects each projective line in $T_H$ exactly once, which contradicts the fact that every sphere
in $\mathbb P(\mathbb R^4)$ bounds a ball.)

Since $\Omega_H$ is disjoint from $T_H$ and  $\xi_\rho^1(\partial H)$ bounds a disk in $\Omega_H$,
we must have $\Omega_H$ contained in $V_H$.

Now consider the torus
$$T_J=\mathbb P\left(\bigcup_{x\in\partial J}\xi_\rho^2(x)\right),$$ 
simple closed curve $C_J=T_J\cap  \mathbb P(\xi_\rho^3(z))$, 
disk $D_J\subset\mathbb P(\xi_\rho^3(z))$ bounded by $C_J$,
and open solid torus $V_J$ bounded by $T_J$ so that
$$\Omega_J=\mathbb P\left(\mathbb R^4-\bigcup_{x\in\partial J}\xi_\rho^3(x)\right)$$ 
and $D_J$ are both contained in $V_J$.

Since $\xi_\rho^1(\partial J)\subset \Omega_H\subset V_H$ and $T_J$ is disjoint from $T_H=\partial V_H$, by transversality,
$T_J$ is contained in ${\rm int}(V_H)$. Therefore, $C_J$ is contained in $D_H$ which implies that $D_J$ is contained in $D_H$.
So  the regular neighborhood of $D_J\cup T_J$  can be taken to have a spherical component $S_J$  contained in $B_H$, so $S_J$ bounds a ball
$B_J$ contained in $B_H$. Putting this all together, we see that $V_J$ must be contained in $V_H$.
Therefore, $\xi^1_\rho(\partial H)$ is contained in the complement of $V_J$ and hence in the complement of $\Omega_J$.
It follows that 
$$\xi^1_\rho(\partial H)\subset \bigcup_{x\in\partial J}\mathbb P(\xi_\rho^3(x))$$ 
which contradicts transversality. Therefore, $\Gamma$ is either a surface group or a free group.
\eproof

\section{Hyperconvexity}

Labourie \cite{labourie-invent} introduced the theory of Anosov representations in his study of Hitchin representations.
Recall that a representation is {\em $d$-Fuchsian} if it is the composition of a Fuchsian representation of
a surface group into $\mathsf{SL}(2,\mathbb R)$ with the irreducible representation of $\mathsf{SL}(2,\mathbb R)$
into $\mathsf{SL}(d,\mathbb R)$. {\em Hitchin representations} \cite{hitchin} are representations of a surface group into
$\mathsf{SL}(d,\mathbb R)$ which can be continuously deformed to a $d$-Fuchsian representation. Labourie showed
that Hitchin representations are irreducible and Borel Anosov.

Labourie \cite{labourie-invent} and Guichard \cite{guichard-hitchin} proved that a representation
$\rho:\pi_1(S)\to\mathsf{SL}(d,\mathbb R)$ is Hitchin if and only if there exists a hyperconvex limit map,
i.e. a $\rho$-equivariant map $\xi^1_\rho:\partial\pi_1(S)\to \mathbb P(\mathbb R^d)$ so that 
if $\{ x_1,\ldots,x_d\}$ are distinct points in $\partial\Gamma$, then
$\xi_\rho^1(x_1)\oplus\cdots\oplus\xi^1_\rho(x_d)=\mathbb R^d$. Labourie further shows that if
$\rho$ is Hitchin, $n_1,\ldots,n_k\in\mathbb N$, $n_1+\cdots+n_k=d$ and $\{x_1,\ldots,x_k\}$
are distinct points in $\partial\pi_1(S)$, then $\xi_\rho^{n_1}(x_1)\oplus\cdots\oplus \xi_\rho^{n_k}(x_k)=\mathbb R^d$
and that if $\{(y_n,z_n)\}$ is a sequence in $\partial\Gamma\times\partial\Gamma$, with $y_n\ne z_n$ for all $n$, converging to $(x,x)$,
and $p,q,r\in \{1,\ldots,p-1\}$ with $p+q=r$, then $\{\xi_\rho^p(y_n)\oplus\xi^q_\rho(z_n)\}$ converges to $\xi_\rho^r(x)$.

We use these hyperconvexity properties to show that Hitchin representations cannot be extended to representations of larger
groups which are $P_1$-Anosov and $P_2$-Anosov. We consider this to be more evidence for a positive answer to Sambarino's
question.

\begin{proposition}
\label{hitchindontextend}
Suppose $\Gamma$ contains a surface subgroup $\Gamma_0$, $\rho:\Gamma\to \mathsf{SL}(d,\mathbb R)$
is projective Anosov and $\rho|_{\Gamma_0}$ is Hitchin. 
\begin{enumerate}
\item
If $d$ is even, then $\Gamma_0$ has
finite index in $\Gamma$.
\item
If $d$ is odd, and $\rho$ is also $P_2$-Anosov, then $\Gamma_0$ has
finite index in $\Gamma$.
\end{enumerate}
\end{proposition}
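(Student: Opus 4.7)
The plan is to argue by contradiction. Suppose $\Gamma_0$ has infinite index in $\Gamma$. Since $\rho|_{\Gamma_0}$ and $\rho$ are both Anosov, their orbit maps are quasi-isometric embeddings, so $\Gamma_0$ is quasiconvex in $\Gamma$ and $\partial \Gamma_0$ embeds as a proper subset of $\partial \Gamma$; I fix $z \in \partial\Gamma \setminus \partial\Gamma_0$. By uniqueness of equivariant transverse limit maps, the restriction of $\xi_\rho^1$ to $\partial \Gamma_0$ agrees with the Hitchin limit curve of $\rho|_{\Gamma_0}$. In particular the curve $C = \xi_\rho^1(\partial \Gamma_0)$ spans $\mathbb{R}^d$ by Proposition \ref{limitmapspans} and carries the full Labourie--Guichard Frenet flag $\xi^k(x)$ for $x \in \partial \Gamma_0$, $1 \le k \le d-1$, satisfying the hyperconvexity $\xi^{n_1}(x_1) \oplus \cdots \oplus \xi^{n_r}(x_r) = \mathbb{R}^d$ for distinct $x_i \in \partial \Gamma_0$ and $n_1 + \cdots + n_r = d$.

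Next I would deploy Corollary \ref{Omegasurface}. Transversality of the $\rho$ limit maps at pairs $(x,z)$ with $x \in \partial \Gamma_0$ gives $\xi_\rho^1(x) \not\subset \xi_\rho^{d-1}(z)$, so $C$ lies inside the affine chart $\mathbb{P}(\mathbb{R}^d) \setminus \mathbb{P}(\xi_\rho^{d-1}(z))$. The corollary then supplies a properly convex domain
$$\Omega = \mathbb{P}(\mathbb{R}^d) \setminus \bigcup_{x \in \partial\Gamma_0}\mathbb{P}(\xi_\rho^{d-1}(x))$$
on which $\rho(\Gamma_0)$ acts convex cocompactly, with $C \subset \partial \Omega$. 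The same transversality places $\xi_\rho^1(z)$ in $\Omega$, while $\mathbb{P}(\xi_\rho^{d-1}(z))$ is a projective hyperplane containing the point $\xi_\rho^1(z)$; since $\Omega$ is bounded in an affine chart, this hyperplane must also meet $\partial \Omega$. When $d=3$ one has $\partial \Omega = C$, so the meeting yields some $x \in \partial \Gamma_0$ with $\xi_\rho^1(x) \in \xi_\rho^{d-1}(z)$, violating transversality; this recovers the $\mathsf{SL}(3,\mathbb{R})$ argument of Theorem \ref{sl3rcase}.

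For $d \ge 4$ the boundary $\partial \Omega$ can properly contain $C$, so the Hitchin hyperconvexity of $C$ must be leveraged to force the intersection back onto $C$. In the even case $d = 2m$, I plan to use the symmetric middle decomposition $\xi^m(x_1) \oplus \xi^m(x_2) = \mathbb{R}^d$ for distinct $x_1, x_2 \in \partial \Gamma_0$: decomposing a generator of $\xi_\rho^1(z)$ through this splitting and letting $x_1, x_2$ vary along $\partial \Gamma_0$, a Frenet-frame computation should show that if $\xi_\rho^1(z) \notin C$ then $\xi_\rho^{d-1}(z)$ is forced to contain $\xi^1(x)$ for some $x \in \partial \Gamma_0$, contradicting the $P_1$-Anosov transversality of $\rho$. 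In the odd case $d = 2m+1$, the analogous hyperconvex splitting $\xi^m(x_1) \oplus \xi^{m+1}(x_2) = \mathbb{R}^d$ is asymmetric, and running the same argument requires access to the $2$-plane $\xi_\rho^2(z)$ to pair with the $(m+1)$-side, which is exactly why the proposition strengthens its hypothesis to $P_2$-Anosov when $d$ is odd.

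The main obstacle is this last step: converting the automatic non-empty intersection $\mathbb{P}(\xi_\rho^{d-1}(z)) \cap \partial \Omega$ into an intersection with $C$ itself. This requires a sharp understanding of where the hyperplane $\mathbb{P}(\xi_\rho^{d-1}(z))$ can touch $\partial \Omega$ given that it meets every osculating hyperplane $\mathbb{P}(\xi^{d-1}(x))$ transversely, and the proof will ultimately rest on the hyperconvexity of the Hitchin Frenet flag together with the continuity of $\xi^k$ at the diagonal of $(\partial \Gamma_0)^r$ used to promote finite-secant information into osculating information at a single point of $\partial \Gamma_0$.
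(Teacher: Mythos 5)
Your setup (quasiconvexity of $\Gamma_0$, choice of $z\in\partial\Gamma\setminus\partial\Gamma_0$, the limit curve $C=\xi_\rho^1(\partial\Gamma_0)$ spanning and lying in the affine chart complementary to $\mathbb{P}(\xi_\rho^{d-1}(z))$, and Corollary \ref{Omegasurface}) is correct, and it does recover the $d=3$ argument of Theorem \ref{sl3rcase}. But the heart of the matter for $d\ge 4$ is precisely the step you defer to ``a Frenet-frame computation should show\dots'': you never actually convert the nonempty intersection $\mathbb{P}(\xi_\rho^{d-1}(z))\cap\partial\Omega$ into an intersection with $C$ itself, and it is not at all clear that decomposing a generator of $\xi_\rho^1(z)$ through the middle splitting $\xi^m(x_1)\oplus\xi^m(x_2)=\mathbb{R}^d$ forces $\xi_\rho^{d-1}(z)$ to contain some $\xi^1(x)$ --- hyperconvexity of the Frenet flag gives no obvious control over where a hyperplane that is transverse to every osculating flag can touch the boundary of the maximal domain, which for $d\ge4$ is strictly larger than $C$. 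So the proposal, as written, identifies the obstacle honestly but does not overcome it: the key idea is missing, and in the even case you also impose no mechanism that actually uses the parity of $d$.

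For comparison, the paper's proof never passes through the convex domain. For $d$ even it is a one-line contradiction: by \cite[Lemma 12.3]{DGK} the limit curve of a Hitchin representation in even dimension cannot lie in any affine chart (it is a homotopically nontrivial loop in $\mathbb{P}(\mathbb{R}^d)$), whereas transversality with $\xi_\rho^{d-1}(z)$ puts $C$ in a chart. For $d$ odd the paper makes essential, concrete use of the $P_2$-Anosov hypothesis: writing $\xi_\rho^1|_{\partial\Gamma_0}=h\circ g$ with $h:D^2\to A$ continuous (the chart is convex, so the circle map extends over the disk), setting $V=\xi_\rho^{d-2}(z)^\perp$ and $F(x)=\bigl[\bigl(h(x)\oplus\xi_\rho^{d-2}(z)\bigr)\cap V\bigr]\in\mathbb{P}(V)\cong S^1$, one shows $F|_{\partial\Gamma_0}$ is locally injective because the Hitchin property forces secant lines $\xi_\rho^1(x_n)\oplus\xi_\rho^1(y_n)$ to converge to $\xi_\rho^2(q)$, which is transverse to $\xi_\rho^{d-2}(z)$; hence $F|_{\partial\Gamma_0}$ is a covering of the circle, contradicting that it factors through the disk and so is trivial on $\pi_1$. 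If you want to complete your route, you would need an argument of comparable precision at exactly the point where your proposal stops.
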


\begin{proof} 
First notice that, since $\rho|_{\Gamma_0}$ is projective Anosov, $\Gamma_0$ is quasiconvex in $\Gamma$(\cite[Lemma 2.3]{CLS}),
so $\partial\Gamma_0$ embeds in $\partial\Gamma$.
If $\Gamma_0$ has infinite index, then there exists $z\in\partial\Gamma-\partial\Gamma_0$.
Let $A$ be the affine chart $\mathbb P(\mathbb R^d-\xi_\rho^{d-1}(z))$. By transversality, $\xi_\rho^1(\partial\Gamma_0)\subset A$.
If $d$ is even, this contradicts Lemma 12.3 in Danciger-Gu\'eritaud-Kassel \cite{DGK} which asserts that 
$\xi_\rho^1(\partial\Gamma_0)$ cannot lie in any
affine chart. Therefore, $\Gamma_0$ must have finite index in $\Gamma$.

Now suppose that $d$ is odd, $\rho$ is $P_2$-Anosov and $\Gamma_0$ has infinite index in $\Gamma$.
There exists a continuous map $h:D^2\to A$ and a homeomorphism $g:\partial\Gamma_0\to S^1$ so that
$\xi_\rho^1|_{\partial\Gamma_0}=h\circ g$. Let $V=\xi_\rho^{d-2}(z)^\perp$ and define the continuous map
$$F:D^2\to \mathbb P(V)\cong S^1$$
by letting
$$ F(x)=\left[ \left(h(x)\oplus\xi_\rho^{d-2}(z)\right)\cap V\right].$$

We now claim that $F|_{\partial\Gamma_0}$ is locally injective. If not there exist sequences $\{x_n\}$ and $\{y_n\}$ 
in $\partial\Gamma_0$ so that $x_n\ne y_n$ (for any $n$), $\lim x_n=q=\lim y_n$ and $F(x_n)=F(y_n)$ for all $n$.
Since $\rho|_{\Gamma_0}$ is Hitchin, the sequence 
$\{\xi_\rho^1(x_n)\oplus\xi_\rho^1(y_n)\}$ converges to $\xi_\rho^{2}(q)$.
So, for all $n$, we may choose vectors $u_n$, $v_n$ and $w_n$  in $\xi_\rho^1(x_n)$, $\xi_\rho^1(y_n)$ and $\xi_\rho^{d-2}(z)$
so that $u_n+v_n=w_n$ and $w_n$ is unit length. Up to subsequence, $\{w_n\}$ converges to  a unit vector $w$, but then
$w\in \xi_\rho^2(q)$, since $w_n\in \xi_\rho^1(x_n)\oplus\xi_\rho^1(y_n)$ for all $n$, and $w\in\xi_\rho^{d-2}(z)$, since
$w_n\in\xi_\rho^{d-2}(z)$ for all $n$. However, this violates transversality, since $q\ne z$.
Therefore, $F|_{\partial\Gamma_0}$ is a covering map, which is impossible since $(F|_{\partial\Gamma_0})_*$ is trivial
on $\pi_1$.
\end{proof}

Pozzetti, Sambarino and Wienhard \cite{PSW} recently introduced the notion of $(p,q,r)$-hyperconvex representations which share
specific transversality properties with Hitchin representations. A representation $\rho:\Gamma\to \mathsf{SL}(d,\mathbb R)$ is said
to be $(p,q,r)$-hyperconvex, where  $p+q\le r$,
if $\rho$ is $P_p$, $P_q$ and $P_r$(or $P_{d-r}$)-Anosov and whenever $x,y,z\in\partial\Gamma$ are distinct,
$$\left(\xi_\rho^p(x)\oplus\xi_\rho^q(y)\right) \cap \xi^{d-r}(z)=\{0\}.$$
One may view the following as a generalization of Corollary 6.6 of Pozzetti-Sambarino-Wienhard \cite{PSW} 
which asserts that if $\rho:\Gamma\to \mathsf{SL}(d,\mathbb R)$ is $(1,1,r)$-hyperconvex and $x_0\in\partial\Gamma$, then
there is a continuous injection of $\partial\Gamma-\{x_0\}$ into $\mathbb P(\mathbb R^r)$, see also Lemma 4.10 in Zhang-Zimmer \cite{zhang-zimmer}.
(Pozzetti, Sambarino and Wienhard \cite[Corollary 6.6]{PSW} also applies to representations into $\mathsf{SL}(K)$ where $K$
is any local field.)

\begin{proposition}
Suppose that $\Gamma$ is a torsion-free hyperbolic group and $\rho:\Gamma\to \mathsf{SL}(d,\mathbb R)$ is $P_p$-Anosov.
If there exists a $(d-r)$-plane $V$ such that 
$$V\cap\left(\xi^p(x)\oplus\xi^p(y)\right)=\{0\}$$
if $x,y\in\partial\Gamma,$
then $\Gamma$ has cohomological dimension at most  $r-p+1$.
If $\Gamma$ has cohomological dimension $r-p+1$, then $\partial\Gamma\cong S^{r-p}$ and 
$(r-p,p)$ is either $(1,1)$, $(2,2)$, $(4,4)$ or $(8,8)$.

Moreover, if $\rho$ is $(p,p,r)$-hyperconvex, then $\Gamma$ has cohomological dimension at most $r-p+1$
and if $\Gamma$ has cohomological dimension $r-p+1$, then $\partial\Gamma\cong S^{r-p}$.
\end{proposition}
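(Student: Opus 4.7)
I will adapt the argument of Theorem \ref{cohomdimd-k}, replacing the ``affine chart'' construction by the quotient projection $\pi:\mathbb R^d\to\mathbb R^d/V\cong\mathbb R^r$. The hypothesis $V\cap(\xi^p(x)\oplus\xi^p(y))=\{0\}$ for all $x,y\in\partial\Gamma$ implies, taking $x=y$, that $\pi|_{\xi^p(x)}$ is injective for each $x$, so $\hat\xi(x):=\pi(\xi^p(x))$ is a $p$-plane in $\mathbb R^r$; taking $x\ne y$ gives $\hat\xi(x)\cap\hat\xi(y)=\{0\}$. Hence $\hat\xi:\partial\Gamma\to\operatorname{Gr}_p(\mathbb R^r)$ is a continuous injection.

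Proceeding exactly as in Lemma \ref{fibrebundle}, using orthogonal projections between nearby $p$-planes as local trivializations, I would form the sphere bundle
\[
E=\bigcup_{x\in\partial\Gamma}S(\hat\xi(x))\subset S(\mathbb R^r)=S^{r-1}
\]
with fibre $S^{p-1}$, so that $E$ has topological dimension $\dim\partial\Gamma+p-1\leq r-1$. This yields $\dim\partial\Gamma\leq r-p$ and therefore $\operatorname{cd}(\Gamma)\leq r-p+1$ by Bestvina-Mess \cite[Cor. 1.4]{bestvina-mess}. In the equality case $\dim\partial\Gamma=r-p$, $E$ is a compact subset of $S^{r-1}$ of full topological dimension; Hurewicz \cite[Thm. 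IV.3/Cor. 1]{hurewicz} then provides an open subset of $S^{r-1}$ inside $E$, and the local product structure of $E$ gives a manifold point of $\partial\Gamma$, whence $\partial\Gamma\cong S^{r-p}$ by Kapovich-Benakli \cite[Th. 4.4]{benakli-kapovich}. Since $E$ is then a closed codimension-zero submanifold of the connected manifold $S^{r-1}$, one gets $E=S^{r-1}$, so $S^{p-1}\to S^{r-1}\to S^{r-p}$ is a sphere fibration and Adams \cite{adams} restricts $(r-p,p)$ to $(1,1),(2,2),(4,4)$, or $(8,8)$.

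For the ``moreover'' statement I would choose $V=\xi^{d-r}(z_0)$ for some fixed $z_0\in\partial\Gamma$. The $(p,p,r)$-hyperconvexity directly gives $V\cap(\xi^p(x)\oplus\xi^p(y))=\{0\}$ whenever $x,y,z_0$ are pairwise distinct, while the case $x=y\ne z_0$ follows from the inclusion $\xi^{d-r}(z_0)\subset\xi^{d-p}(z_0)$ (valid since $p\leq r$) together with transversality of the $P_p$-Anosov limit maps. So the sphere bundle construction runs over $\partial\Gamma-\{z_0\}$ and bounds $\dim(\partial\Gamma-\{z_0\})$ by $r-p$; writing $\partial\Gamma-\{z_0\}=\bigcup_n K_n$ as a countable union of closed subsets of $\partial\Gamma$ and applying the sum theorem for topological dimension to the cover $\partial\Gamma=\{z_0\}\cup\bigcup_n K_n$ yields $\dim\partial\Gamma\leq r-p$, hence $\operatorname{cd}(\Gamma)\leq r-p+1$. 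In the equality case, the sum theorem also guarantees a point of $\partial\Gamma-\{z_0\}$ of local dimension $r-p$, and the same local Hurewicz-type argument produces a manifold point, forcing $\partial\Gamma\cong S^{r-p}$.

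\textbf{Main obstacle.} The principal technical point is in the hyperconvex case: the natural candidate $V=\xi^{d-r}(z_0)$ fails the transversality hypothesis at $z_0$ itself, since $\xi^p(z_0)\subset\xi^{d-r}(z_0)$, so the entire bundle construction must be carried out over the non-compact set $\partial\Gamma-\{z_0\}$ and the sum theorem for topological dimension is needed to transfer the bound back to $\partial\Gamma$. Verifying the local triviality of $E$ is routine and directly parallels the proof of Lemma \ref{fibrebundle}.
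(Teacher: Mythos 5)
Your dimension bound is correct and is essentially the paper's own argument in disguise: projecting to $\mathbb R^d/V\cong\mathbb R^r$ is the same as the paper's normalized orthogonal projection onto $V^{\perp}$, the resulting $S^{p-1}$-bundle over $\partial\Gamma$ inside $S^{r-1}$ gives $\dim\partial\Gamma\le r-p$ and hence the cohomological dimension bound via Bestvina--Mess, and your treatment of the hyperconvex case over $\partial\Gamma-\{z_0\}$ (with the sum theorem) matches the paper's. The problem is the equality case.

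The step ``the local product structure of $E$ gives a manifold point of $\partial\Gamma$'' is a genuine gap when $p\ge 2$. What you actually know is that some open subset of $U_x\times S^{p-1}$, hence of $\partial\Gamma\times\mathbb R^{p-1}$, is homeomorphic to an open subset of $\mathbb R^{r-1}$; being a manifold does not descend along product factors. There are compacta with no Euclidean points at all whose product with a Euclidean factor is a manifold (Bing's dogbone space has $X\times\mathbb R\cong\mathbb R^4$ with $X$ not a manifold, and the Daverman--Walsh ``ghastly'' examples have no manifold points whatsoever), so from openness of a piece of $E$ in $S^{r-1}$ you cannot conclude that $\partial\Gamma$ contains an open set homeomorphic to $\mathbb R^{r-p}$, which is exactly what \cite{benakli-kapovich} needs. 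Since the identification $\partial\Gamma\cong S^{r-p}$ is also the input for your ``$E$ is a closed $(r-1)$-manifold, hence $E=S^{r-1}$, hence a Hopf fibration'' step, both equality-case conclusions (in the main statement and in the hyperconvex one) are unproven as written, except when $p=1$. The paper closes this by a separate local construction that embeds a piece of $\partial\Gamma$ itself, not of $E$, into a manifold of the right dimension: fix $x_0$, choose $W_1$ with $\xi^p(x_0)\oplus W_1\oplus V=\mathbb R^d$ and a line $L\subset\xi^p(x_0)$, set $W=W_1\oplus L$, and define $F(x)=\left[\left(\xi^p(x)\oplus V\right)\cap W\right]$ on a small neighborhood $U$ of $x_0$; your transversality hypothesis shows $F$ is injective, so $U$ embeds in the $(r-p)$-dimensional manifold $\mathbb P(W)$, and then \cite{hurewicz} plus \cite{benakli-kapovich} give $\partial\Gamma\cong S^{r-p}$, after which your Adams argument applies. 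The same local map (with $V=\xi^{d-r}(z_0)$ and a basepoint different from $z_0$) is what the paper uses to settle the hyperconvex equality case; you need to add this ingredient, or something equivalent, to your proof.
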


Notice that if $p\le q$, then $(p,q,r)$-hyperconvex representations are $(p,p,r)$-hyperconvex, so we may conclude that
if $\rho:\Gamma\to \mathsf{SL}(d,\mathbb R)$ is $(p,q,r)$-hyperconvex, then $\Gamma$ has cohomological dimension at most
$r+1-\min\{p,q\}$. Pozzetti, Sambarino and Wienhard \cite[Cor. 7.6]{PSW} observe that if $k\ge 2$ and $\rho:\Gamma\to \mathsf{PO}(d,1)$,
then the $k^{\rm th}$ symmetric power $S^k\rho:\Gamma \to \mathsf{PGL}(S^k(\mathbb R^{d+1}))$ is $(1,1,d)$-hyperconvex,
so one obtains no topological restrictions in the case where $\rho$ is $(1,1,d)$-hyperconvex and $\Gamma$ has maximal
cohomological dimension $d$.

\begin{proof}
Let $p:E\to \partial\Gamma$ be the fibre bundle provided by Lemma \ref{fibrebundle}
where 
$$E=\bigcup_{x\in\partial\Gamma} S(\xi_\rho^p(x))\subset S(\mathbb R^d).$$
If $\partial \Gamma$ has topological dimension $m$, then $E$ has topological dimension $m+p-1$.

Let $\pi:\mathbb R^d\to V$ be orthogonal projection (with respect to some fixed background metric on $\mathbb R^d$).
Let 
$$f:\mathbb R^d-V\to S(V^\perp)\cong S^{r-1}$$
be the continuous map given by
$$f(u)=\frac{u-\pi(u)}{||u-\pi(u)||}.$$
Notice that if $f(u)=f(v)$, then $||u-\pi(u)||u-||v-\pi(v)||v\in V$.
Suppose $u\in S(\xi_\rho^p(x))$ and $v\in S(\xi_\rho^p(y))$ and $f(u)=f(v)$, then
since $\left(\xi_\rho^p(x)\oplus\xi_\rho^p(y)\right)\cap V=\{0\}$, it must be the case that $u=v$
and, since  $\xi_\rho^p(x)\cap\xi_\rho^p(y)=\{0\}$ if $x\ne y$, it must be the case that $x=y$.
Therefore, the restriction $f|_E$  of $f$ to $E$ is injective and hence a topological embedding.
It follows, since $E$ has topological dimension $m+p+1$, that $m+p-1\le r-1$, so $m\le r-p$, which 
implies, by \cite[Cor. 1.4]{bestvina-mess}, that $\Gamma$ has  cohomological dimension at most $r-p+1$.

Suppose that $m=r-p$. We first show that $\partial \Gamma\cong S^{m}=S^{r-p}$. 
Fix $x_0\in\partial\Gamma$. Choose a $(r-p)$-dimensional subspace $W_1\subset\mathbb R^d$ so that $\xi_\rho^p(x_0)\oplus W_1\oplus V=\mathbb R^d$.
Pick a line $L\subset \xi^p_\rho(x_0)$ and let $W=W_1\oplus L$.
There exists a neighborhood $U$ of $x_0$ in $\partial\Gamma$
so that $\left(\xi_\rho^p(x)\oplus V\right)\cap W$ is a line if $x\in U$.  
Consider the continuous map $F:U\to \mathbb P(W)$ given by 
$$F(x)=\left[\left(\xi_\rho^p(x)\oplus V\right)\cap W\right].$$
We argue as above to show that $F$ is injective. If $F(x_1)=[u_1+v_1]$ and $F(x_2)=[u_2+v_2]$, where $u_i\in\xi^p_\rho(x_i)$
and $v_i\in V$, and $F(x_1)=F(x_2)$, we may assume that $u_1+v_1=u_2+v_2$, so 
$u_1-u_2=v_2-v_1\in\left(\xi_\rho^p(x_1)\oplus\xi_\rho^p(x_2)\right)\cap V=\{0\}$.
Therefore, $u_1=u_2$ (and each $u_i$ is non-zero, since $V\cap W=\{0\}$), which implies that $\xi_\rho^p(x_1)\cap\xi_\rho^p(x_2)\ne \{0\}$,
so $x_1=x_2$.
Therefore, $F$ is a topological embedding.
Since $U$ and $\mathbb P(W)$ both have topological
dimension $m=r-p$, $U$ and hence $\partial \Gamma$ contains a manifold point. 
Thus, by Kapovich-Benakli \cite[Th. 4.4]{benakli-kapovich}, $\partial\Gamma\cong S^{m}$.
Moreover, $f|_E$ is one of the four Hopf fibrations, so the only possibilities for $(r-p,p)$ are
$(1,1)$, $(2,2)$, $(4,4)$ and $(8,8)$.

If $\rho$ is $(p,p,r)$-hyperconvex, we choose $x_0\in\partial\Gamma$ and let $V=\xi^{d-r}_\rho(x_0)$ and apply
the same argument to conclude that $E-S(\xi_\rho^p(x_0))$  has topological dimension $m+p-1$ and  
every compact subset embeds in a sphere of dimension $r-1$. We again conclude that $\Gamma$ has  
cohomological dimension at most $r-p+1$.
Similarly, we may show, that if $\Gamma$ has  cohomological dimension $r-p+1$, 
then a neighborhood of a point $z_0\ne x_0\in\partial\Gamma$ embeds in
a projective space of dimension $r-p$, so $\partial\Gamma\cong S^{r-p}$.
\end{proof}

\section{Characterizing Benoist representations by limit maps}

In this section we obtain characterizations of Benoist representations purely in
terms of limit maps. We first work in the setting where 
the domain group doesn't split over a cyclic subgroup.

\begin{theorem}
\label{benoistlimitmap2}
Suppose that $d\ge 4$ and $\Gamma$ is a torsion-free hyperbolic group. 
A representation $\rho:\Gamma \rightarrow \mathsf{SL}(d, \mathbb{R})$  is a Benoist representation
if and only if $\Gamma$ has  cohomological dimension $d-1$,  $\Gamma$ does not split over
a cyclic subgroup and  there is a $\rho$-equivariant continuous non-constant map 
$\xi:\partial\Gamma \rightarrow \mathbb{P}(\mathbb{R}^{d})$.
\end{theorem}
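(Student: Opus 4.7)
\emph{Forward direction.} Suppose $\rho$ is a Benoist representation preserving a strictly convex properly convex domain $\Omega\subset\mathbb{P}(\mathbb{R}^d)$ on which $\rho(\Gamma)$ acts cocompactly. Then $\Omega$ is a contractible $(d-1)$-manifold and $\Omega/\rho(\Gamma)$ is a closed aspherical $(d-1)$-manifold, forcing $\mathrm{cd}(\Gamma)=d-1$. The limit map $\xi^1_\rho$ embeds $\partial\Gamma$ onto $\partial\Omega\subset\mathbb{P}(\mathbb{R}^d)$, and in particular is non-constant. Finally, $\partial\Gamma\cong\partial\Omega$ is a topological $(d-2)$-sphere, and since $d\ge 4$ this sphere has no (local) cut points, so Bowditch's theorem \cite{bowditch-cutpoints} rules out any splitting of the one-ended hyperbolic group $\Gamma$ over a virtually cyclic subgroup, and in particular over a cyclic subgroup.

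\emph{Reverse direction, overall strategy.} The plan is to build a $\rho(\Gamma)$-invariant properly convex domain via Proposition \ref{Zimmerconnected}, promote the resulting action to a cocompact one using the cohomological dimension hypothesis, and then invoke Benoist's theorem \cite[Thm 1.1]{benoist-divisible1} that a divisible properly convex set is strictly convex if and only if the acting group is Gromov hyperbolic. Since $\Gamma$ does not split over the trivial subgroup it is one-ended, so $\partial\Gamma$ is connected and $\xi(\partial\Gamma)$ is a compact connected subset of $\mathbb{P}(\mathbb{R}^d)$; its linear span $W$ is therefore a $\rho(\Gamma)$-invariant subspace. To apply Proposition \ref{Zimmerconnected} I need to establish two statements: (i) $W=\mathbb{R}^d$, and (ii) $\xi(\partial\Gamma)$ lies in some affine chart of $\mathbb{P}(\mathbb{R}^d)$. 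Once these hold, Proposition \ref{Zimmerconnected} produces a properly convex $\rho(\Gamma)$-invariant open domain $\Omega$; combined with standard convex-projective dynamics, properness of the action on $\Omega$ should follow, and then the hypothesis $\mathrm{cd}(\Gamma)=d-1$ forces the aspherical $(d-1)$-manifold $\Omega/\rho(\Gamma)$ to be closed. Benoist's theorem then upgrades $\Omega$ to strictly convex, identifying $\rho$ as a Benoist representation.

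\emph{Spanning.} For (i), I would argue by induction on $d$. If $W$ were a proper subspace of $\mathbb{R}^d$, the restriction of $\rho$ to $W$ would still admit $\xi$ as a non-constant $\rho$-equivariant continuous map with spanning image in $\mathbb{P}(W)$; the inductive hypothesis (with base case $\dim W\le 3$ handled by Theorem \ref{sl3rcase} together with the easy case of Fuchsian representations) would then realize $\Gamma$ as the domain group of a Benoist representation in $\mathsf{SL}^\pm(W)$, whence $\mathrm{cd}(\Gamma)\le \dim W-1<d-1$, a contradiction. The delicate point here is that the induction requires the hypotheses of the theorem to propagate to $W$, and in particular the affine-chart condition (ii) must hold at each stage of the induction, so (i) and (ii) must really be treated together.

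\emph{The main obstacle.} Step (ii) is the crux: a connected compact subset of $\mathbb{P}(\mathbb{R}^d)$ need not lie in any affine chart, and this is exactly where the hypothesis that $\Gamma$ does not split over a cyclic subgroup must be invoked. My intended strategy is contrapositive: assume every projective hyperplane meets $\xi(\partial\Gamma)$, choose a hyperplane $H$ so that $\xi^{-1}(\mathbb{P}(H))$ is a non-empty proper closed $\rho^{-1}(\mathrm{Stab}(H))$-invariant subset of $\partial\Gamma$, and leverage Bowditch's JSJ machinery for one-ended hyperbolic groups to produce from this data a non-trivial splitting of $\Gamma$ over a virtually cyclic subgroup, contradicting the hypothesis. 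Making this bridge from the topological affine-chart failure to an honest cyclic splitting is the technically demanding part of the argument, and I would expect to spend most of the effort there; the remaining verifications of properness, cocompactness and strict convexity will largely assemble from Proposition \ref{Zimmerconnected}, the Bestvina--Mess dimension formula, and Benoist's structure theory.
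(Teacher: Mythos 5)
Your forward direction and overall skeleton for the converse (invariant convex set via Proposition \ref{Zimmerconnected}, cocompactness from the cohomological dimension, then Benoist's strict convexity theorem) match the paper, but the proposal has a genuine gap exactly at the point you flag as the crux: you never prove that $\xi(\partial\Gamma)$ lies in an affine chart, and the contrapositive route you sketch --- ``every hyperplane meets $\xi(\partial\Gamma)$, hence Bowditch's JSJ machinery produces a cyclic splitting'' --- has no identified mechanism; there is no apparent way to convert failure of the affine-chart condition into a local cut point of $\partial\Gamma$, which is what Bowditch's theorem would require. The paper uses the no-splitting hypothesis in the opposite, and usable, direction: by \cite[Thm 6.2]{bowditch-cutpoints}, $\partial\Gamma-\{\gamma_0^{\pm}\}$ is \emph{connected} for a suitable $\gamma_0$. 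The element $\gamma_0$ is a biproximal element of $\rho(\Gamma)$, whose existence is itself a nontrivial point you omit entirely: the hypothesis is only a non-constant continuous equivariant map, with no Anosov or discreteness assumption, and the paper's Lemma \ref{limitmapprops} proves from the spanning of $\xi(\partial\Gamma)$ that $\rho$ is $P_1$-divergent (hence discrete and faithful, which you also need for proper discontinuity on $\Omega$ --- your ``standard convex-projective dynamics'' silently assumes this) and, when $\rho$ is irreducible, that $\rho(\Gamma)$ contains a biproximal element. Given $\gamma_0$, equivariance forces $\xi(x)$ off both repelling hyperplanes of $\rho(\gamma_0^{\pm 1})$ for $x\neq\gamma_0^{\pm}$, and connectivity of $\partial\Gamma-\{\gamma_0^{\pm}\}$ puts $\xi(\partial\Gamma)$ in one component of the complement of the two hyperplanes, hence in an affine chart. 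Without this (or an equivalent substitute) your proof does not go through.

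Two further points. First, your induction for the spanning step is circular as written: if $W=\langle\xi(\partial\Gamma)\rangle$ is proper, the theorem in dimension $\dim W$ cannot be applied to $\rho|_W$ because its hypothesis demands $\mathrm{cd}(\Gamma)=\dim W-1$, which is exactly what fails ($\mathrm{cd}(\Gamma)=d-1>\dim W-1$). The paper avoids this by proving an auxiliary statement (Proposition \ref{irredcase}) with the weaker hypothesis $\mathrm{cd}(\Gamma)\ge \dim W-1$, concluding both that the restricted representation is Benoist and that $\mathrm{cd}(\Gamma)=\dim W-1$, which then yields the contradiction; it also needs a separate argument (Proposition \ref{redcase}) to rule out reducible $\rho$ with spanning limit set, by projecting to an irreducible quotient block and applying the same auxiliary statement. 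Second, even granting spanning and the affine chart, the passage from ``preserves $\Omega$'' to ``acts properly discontinuously on $\Omega$'' requires discreteness and faithfulness, again supplied by Lemma \ref{limitmapprops}; this cannot be waved through, since a priori $\rho$ could have non-discrete image.
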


\begin{proof}
If $\rho$ is a Benoist representation then $\Gamma$ has cohomological dimension $d-1$,
$\rho$ is projective Anosov and  $\xi_\rho^1$ is a 
$\rho$-equivariant continuous non-constant map. Moreover, since $\partial\Gamma\cong S^{d-2}$ and
$d\ge 4$, $\Gamma$ does not split over
a cyclic subgroup (\cite[Thm 6.2]{bowditch-cutpoints}). So, the bulk of our work is in establishing the converse.

We  first prove a general result about representations admitting a limit map whose image spans.
We recall that $\rho$ is said to be $P_1$-divergent if whenever  $\{\gamma_n\}$  is a 
sequence of distinct elements in $\Gamma$, then
$$\lim\frac{\sigma_2(\rho(\gamma_n))}{\sigma_1(\rho(\gamma_n))}=0$$
where $\sigma_i(\rho(\gamma_n))$ is the $i^{\rm th}$ singular value of $\rho(\gamma_n)$.

\begin{lemma} 
\label{limitmapprops}
Suppose that $\Gamma$ is a non-elementary hyperbolic group , $d\ge 2$, and
\hbox{$\rho:\Gamma \rightarrow \mathsf{SL}(d, \mathbb{R})$} 
admits a continuous $\rho$-equivariant map $\xi:\partial\Gamma \rightarrow \mathbb{P}(\mathbb{R}^d)$
such that $\xi(\partial\Gamma)$ spans $\mathbb R^d$. Then
the representation $\rho$ is $P_1$-divergent and if $\rho(\gamma)\in\rho(\Gamma)$ is proximal, 
then $\xi(\gamma^+)$ is the attracting eigenline of $\rho(\gamma)$.
In particular, $\rho(\Gamma)$ is discrete and $\rho$ has finite kernel.

Moreover, if in addition, $\rho$ is irreducible, then  $\rho(\Gamma)$ contains a biproximal element.
\end{lemma}

\begin{proof}
If $\rho$ is not $P_1$-divergent, then there exists a sequence $\{\gamma_n\}$ of distinct elements of $\Gamma$
so that  $\lim\frac{\sigma_2(\rho(\gamma_n))}{\sigma_1(\rho(\gamma_n))}=C>0$.
Since $\Gamma$ acts a convergence group on $\partial\Gamma$, we may pass to another subsequence so that
there exist $\eta$ and $\eta'$ such that if $x \neq \eta'$ then $\lim\gamma_n x=\eta$. 
Since $\xi(\partial\Gamma)$ spans $\mathbb R^d$ and $\partial\Gamma$ is perfect,
there exist $x_1,...,x_d \neq \eta'$ such that $\mathbb{R}^d=\oplus \xi(x_i)$. 
Suppose that $\xi(x_i)=[l_{i}e_1]$  for each $i$ and $\xi(\eta)=[l_\eta e_i]$ where $l_i,l_\eta\in\mathsf{O}(d)$.
We write each $\rho(\gamma_n)=k_na_nk_n'$ in the Cartan decomposition where $k_n,k_n'\in \mathsf{O}(d)$ and
$a_n$ is the diagonal matrix so that $a_{ii}=\sigma_i(\rho(\gamma_n))$ for all $n$.
We may pass to another subsequence so that $\{k_n\}$
and $\{k_n'\}$ converge to $k$ and $k'$.
Then, since $\xi$ is $\rho$-equivariant, $\lim\rho(\gamma_n)\xi(x_i)=\xi(\eta)$, so
$\{[k_na_nk_n'l_ie_1]\}$ converges to $[l_\eta e_1]$, which implies that
$\{[a_nk_n'l_ie_1]\}$ converges to $[k^{-1}l_\eta e_1]$ in $\mathbb P(\mathbb R^d)$.
So, perhaps after replacing $l_i$ with $-l_i$, 
$$\lim \frac{a_nk_{n}'l_{i}e_1}{||a_nk_{n}'l_{i}e_1||}=k^{-1}l_{\eta}e_1$$ 
for all $i$.
We pass to a subsequence so that
$$\nu_{i}=\lim \frac{||a_nk_{n}'l_{i}e_1||}{\sigma_1(\rho(\gamma_n))}$$
exists for all $i$.
Then,
\begin{eqnarray*}
 \nu_i\left\langle k^{-1}l_\eta e_1,e_1\right\rangle & = & \lim\left( \frac{||a_nk_{n}'l_{i}e_1||}{\sigma_1(\rho(\gamma_n))}\right)
 \left\langle \frac{a_nk_{n}'l_{i}e_1}{||a_nk_n'l_ie_1||} ,e_1\right\rangle \\
& = & \lim\left\langle a_nk_{n}'l_{i}e_1, \frac{e_1}{\sigma_1(\rho(\gamma_n))}\right\rangle\\
&=& \lim\left\langle k_n'l_ie_i,e_1\right\rangle=\left \langle k'l_{i}e_1,e_1\right \rangle \\
\end{eqnarray*}
A similar calculation, and the fact that $\lim\frac{\sigma_2(\rho(\gamma_n))}{\sigma_1(\rho(\gamma_n))}=C>0$,  yields  
$$\left \langle k'l_{i}e_1,e_2\right \rangle =\frac{\nu_{i}}{C}\left \langle k^{-1}l_{\eta}e_1,e_2 \right \rangle.$$
Since the vectors $\{ k'l_ie_1\}_{i=1,..,d}$ span $\mathbb{R}^d$, there exists $i_0$ such that $\nu_{i_0} \neq 0$. 
Then we further observe that  
$$\left \langle k'l_{i}e_1-\frac{\nu_i}{\nu_{i_0}}k'l_{i_0}e_1,e_1\right \rangle =
\left \langle \nu_i k^{-1}l_\eta e_1-\frac{\nu_i}{\nu_{i_0}}\nu_{i_0}k^{-1}l_\eta e_1,e_1\right \rangle=0$$
and similarly that
$$\left \langle k'l_{i}e_1-\frac{\nu_i}{\nu_{i_0}}k'l_{i_0}e_1,e_2\right \rangle=0.$$ 
Therefore, each $k'l_{i}e_1$ lies in the subspace of $\mathbb R^d$
spanned by  $k'l_{i_0}e_1$ and $e_1^{\perp}\cap e_2^{\perp}$, which has codimension at least one.
This however contradicts the fact that $\{ k'l_ie_1\}_{i=1,..,d}$ spans $\mathbb{R}^d$.
Therefore, $\rho$ is $P_1$-divergent. Notice that if $\rho(\gamma)\in\rho(\Gamma)$ is proximal, then there
exists $x\in\partial\Gamma$ so that $\xi(x)$ does not lie in the repelling hyperplane of $\rho(\gamma)$,
so $\rho(\gamma^n)(x)$ converges to the attracting eigenline of $\rho(\gamma)$. Since $\xi$ is $\rho$-equivariant
and $\lim\gamma^n(x)=\gamma^+$, $\xi(\gamma^+)$ is the attracting eigenline of $\rho(\gamma)$.

We now assume that $\rho$ is also irreducible. Theorem \ref{finitesubset} provides
a finite subset $A$ of $\Gamma$ and $M>0$ so
that if $\gamma\in\Gamma$, then there exists $\alpha\in A$ so that
$$|\log\lambda_i(\rho(\gamma \alpha))-\log\sigma_i(\rho(\gamma))|\le M$$
for all $i$. Let $\{\gamma_n\}$ be an infinite sequence of distinct elements in $\Gamma$ and let $\{\alpha_n\}$ be the
associated sequence of elements of $A$.
Since $\rho$ is $P_1$-divergent,  $\frac{\lambda_1(\rho(\gamma_n\alpha_n))}{\lambda_2(\rho(\gamma_n\alpha_n))}\to\infty$
and $\frac{\lambda_{d-1}(\rho(\gamma_n\alpha_n))}{\lambda_d(\rho(\gamma_n\alpha_n))}\to\infty$,
so $\rho(\gamma_n\alpha_n)$ is biproximal  for all large enough $n$.
\end{proof}

We now complete the proof in the case where $\rho$ is irreducible.

\begin{proposition}
\label{irredcase}
Suppose that
$\Gamma$ is a torsion-free hyperbolic group of cohomological dimension $m\ge d- 1$ which does not admit a cyclic splitting.
If $\rho:\Gamma \rightarrow \mathsf{SL}(d, \mathbb{R})$ is irreducible and there exists a $\rho$-equivariant continuous map 
$\xi:\partial\Gamma \rightarrow \mathbb{P}(\mathbb{R}^{d})$, then $m=d-1$ and
$\rho$ is a Benoist representation.
\end{proposition}

\begin{proof}
Since $\rho$ is irreducible, $\xi(\partial\Gamma)$ spans $\mathbb R^d$, since $\rho(\Gamma)$
preserves the space spanned by $\xi(\partial\Gamma)$. Lemma \ref{limitmapprops} allows us to choose
$\gamma_0\in\Gamma$ so that $\rho(\gamma_0)$ is biproximal. 
We may assume that the attracting eigenlines of $\rho(\gamma_0)$ and $\rho(\gamma_0^{-1})$ are 
$<e_1>$ and  $<e_{d}>$ respectively and the corresponding attracting hyperplanes 
are $e_{d}^{\perp}$ and $e_{1}^{\perp}$. 
In particular, $\xi(\gamma_0^{+})=[e_1]$ and $\xi(\gamma_0^{-})=[e_{d}]$. 
Suppose that $x \in \partial\Gamma- \{\gamma_0^{+},\gamma_0^{-}\}$. 
Since $\lim \gamma_0^n(x)=\gamma_0^+$ and $\lim \gamma_0^{-n}(x)=\gamma_0^-$, $\xi(x)$ cannot lie in either
$\mathbb{P}(e_1^{\perp})$ or  $\mathbb{P}(e_{d}^{\perp})$. 
Since  the group $\Gamma$ does not split over a cyclic subgroup, the set \hbox{$\partial\Gamma -\{\gamma_0^{\pm}\}$}
is connected (see Bowditch \cite[Thm 6.2]{bowditch-cutpoints}),
so we may assume that $\xi(\partial\Gamma -\{\gamma_0^{\pm}\})$ is contained in the connected component 
$ \{[1:x_2:...:x_{d}:]:x_d>0\}$ of $\mathbb{P}(\mathbb{R}^{d})-\mathbb{P}(e_1^{\perp})\cup \mathbb{P}(e_d^{\perp})$. 
It follows that $\xi(\partial\Gamma)$ lies in the affine chart 
$\mathbb{P}(\mathbb{R}^{d})-\mathbb{P}(V)$ where \hbox{$V=\{(x_1,...,x_d) \in \mathbb{R}^{n+1}:x_1=-x_d\}$}. 
Lemma \ref{Zimmerconnected} then implies that 
$\rho(\Gamma)$ preserves a properly convex domain $\Omega$ in $\mathbb P(\mathbb R^d)$. 
Since $\rho(\Gamma)$ is \hbox{$P_1$-divergent}, it is discrete and faithful, so it must acts properly  discontinuously on $\Omega$
(see \cite[Fact 2.10]{benoist-divisible3}).
Since $\rho(\Gamma)$ has cohomological dimension $m\ge d-1$, it must have compact quotient.
Hence, by Benoist \cite[Thm 1.1]{benoist-divisible1}, $\Omega$ is strictly convex, so $\rho$ is a Benoist representation
and $m=d-1$.
\end{proof}

It remains to rule out the case where $\rho$ is reducible. We first deal with the case where $\xi(\partial\Gamma)$ spans $\mathbb R^d$.

\begin{proposition}
\label{redcase}
Suppose that 
$\Gamma$ is a torsion-free hyperbolic group of cohomological dimension $m\ge d-1\ge 2$ which does not admit a cyclic splitting.
If $\rho:\Gamma \rightarrow \mathsf{SL}(d, \mathbb{R})$ is a representation and there exists a $\rho$-equivariant continuous non-constant map 
$\xi:\partial\Gamma \rightarrow \mathbb{P}(\mathbb{R}^{d})$ so that $\xi(\partial\Gamma)$ spans $\mathbb R^d$,
then $\rho$ is irredicuble.
\end{proposition}

\begin{proof}
If $\rho$ is reducible, one may conjugate it to have the form 
$$
{\begin{bmatrix}
\rho_1 & \ast & \ast  & \ast \\ 
 0& \rho_2 &\ast &\ast\\ 
 0& 0 & \ddots & \ast&\\ 
 0& 0 & 0 & \rho_k
\end{bmatrix}}$$ 
where $k\ge 2$, each $\rho_{i}:\Gamma \rightarrow \textup{GL}(V_i)$ is an $d_i$-dimensional irreducible representation and 
\hbox{$\mathbb R^d=\oplus_{i=1}^k V_i$}. Notice that if $x \in \partial\Gamma$ and $\xi(x)$  lies in $\hat V=\mathbb{R}^{d-d_k}\times\{0\}^{d_k}$
then, since $\Gamma$ acts minimally on $\partial\Gamma$ and $\rho(\Gamma)$ preserves $\hat V$, $\xi(\partial\Gamma)$
would be contained in the proper subspace $\hat V$, which would contradict our assumption that $\xi(\partial\Gamma)$ spans $\mathbb R^d$.
It follows that there exists a $\rho_k$-equivariant map 
$\xi_{k}:\partial\Gamma \rightarrow \mathbb{P}(\mathbb{R}^{d_k})$, obtained by letting $\xi_k(x)$ denote the orthogonal projection of $\xi(x)$
onto  $V_k$.
Notice that $\xi_k(\partial\Gamma)$ spans $V_k$, since $\xi(\partial\Gamma)$ spans $\mathbb R^d$.
Proposition \ref{irredcase}, applied to the representation $\rho_k$,
implies that $\Gamma$ has cohomological dimension $d_k-1$, which is a contradiction.
\end{proof}

In the final case of the proof of Theorem \ref{benoistlimitmap2}, $W=\langle \xi(\partial\Gamma) \rangle$ is a proper subspace of $\mathbb{R}^{d}$.
Let $\pi_W:\mathsf{GL}(W)\to\mathsf{SL}^\pm(W)$ be the obvious projection map. Consider
$\hat\rho=\pi_W\circ\rho|_W:\Gamma\to \mathsf{SL}^{\pm}(W)$ and the non-constant $\hat\rho$-equivariant map $\hat\xi:\partial\Gamma\to \mathbb P(W)$
(which is simply $\xi$ with the range regarded as $\mathbb P(W)$). Since $\xi$ is non-constant, $W$ has dimension at least 2.
If $W$ has dimension 2, then, by Lemma \ref{limitmapprops}, $\rho$ is discrete and
faithful, which implies that $\Gamma$ is a free group or surface group, contradicting our assumptions on $\Gamma$.
If $W$ has dimension at least 3, then Proposition \ref{redcase} implies that $\hat\rho$ is irreducible, while
Proposition \ref{irredcase} provides a contradiction in this case.
\end{proof}

We next observe that if $\rho:\Gamma\to \mathsf{SL}(d,\mathbb R)$ has a non-constant spanning limit map, then
the restriction to the boundary of any non-abelian quasiconvex subgroup is also non-constant.

\begin{lemma} 
\label{factornonconstant}
Suppose that  $\Gamma$ is a torsion-free hyperbolic group  and $\Gamma_0$ is a non-abelian quasiconvex subgroup of $\Gamma$. 
If $\rho:\Gamma \rightarrow \textup{SL}(d, \mathbb{R})$  admits a continuous $\rho$-equivariant map 
$\xi:\partial \Gamma \rightarrow \mathbb{P}(\mathbb{R}^d)$ so that $\xi(\partial\Gamma)$ spans $\mathbb R^d$,
then the restriction of $\xi$ to $\partial \Gamma_0$ is non-constant. 
\end{lemma}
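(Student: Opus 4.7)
The plan is to assume, for contradiction, that $\xi$ takes the constant value $L\in\mathbb{P}(\mathbb{R}^d)$ on $\partial\Gamma_0$. Then $\rho$-equivariance together with the $\Gamma_0$-invariance of $\partial\Gamma_0$ forces $\rho(\Gamma_0)$ to stabilize $L$, yielding a character $\chi\colon\Gamma_0\to\mathbb{R}^*$ describing the action on $L$.

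The core dynamical step is to run the Cartan-decomposition argument used in the proof of Lemma \ref{limitmapprops} separately on $\rho(\gamma^n)$ and $\rho(\gamma^{-n})$ for each infinite order $\gamma\in\Gamma_0$. Since $\xi(\gamma^\pm)=L$, North--South dynamics of $\gamma$ on $\partial\Gamma$ and $\rho$-equivariance give $\rho(\gamma^n)\xi(x)\to L$ for every $x\neq\gamma^-$ and $\rho(\gamma^{-n})\xi(x)\to L$ for every $x\neq\gamma^+$. Writing $\rho(\gamma^n)=k_na_nk_n'$ with $k_n\to k$, $k_n'\to k'$ along a subsequence, and using $P_1$-divergence from Lemma \ref{limitmapprops} together with the spanning hypothesis on $\xi(\partial\Gamma)$, one can choose $y\neq\gamma^-$ with $\langle k'\xi(y),e_1\rangle\neq0$ so that $\rho(\gamma^n)\xi(y)\to[ke_1]$; comparing with the limit $L$ forces $L=[ke_1]$. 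The parallel analysis for $\rho(\gamma^{-n})$ forces $L=[k'^{-1}e_d]$. I expect these two identifications to imply that $\chi(\gamma)$ is simultaneously the unique maximum-modulus eigenvalue of $\rho(\gamma)$ (with attracting Jordan direction $L$) and the unique minimum-modulus one (via the same analysis applied to $\gamma^{-1}$), so that every eigenvalue of $\rho(\gamma)$ has the common modulus $|\chi(\gamma)|$, and $|\det\rho(\gamma)|=1$ yields $|\chi(\gamma)|=1$. Thus every element of $\rho(\Gamma_0)$ has all eigenvalues of absolute value $1$.

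To finish, I would pass to the semisimplification $\rho^{\mathrm{ss}}$, which shares eigenvalues with $\rho$ by Lemma \ref{semisimpleproperties}; every element of $\rho^{\mathrm{ss}}(\Gamma_0)$ still has unit-modulus eigenvalues, and since $\rho^{\mathrm{ss}}$ is semisimple, Benoist's theory of limit cones \cite{benoist-limitcone} (equivalently the Abels--Margulis--Soifer construction of proximal elements \cite{AMS}) implies that the Zariski closure of $\rho^{\mathrm{ss}}(\Gamma_0)$ has no non-compact almost-simple factor, and is consequently compact-by-unipotent. A finite-index subgroup of $\Gamma_0$ therefore has its $\rho$-image contained in a nilpotent subgroup of $\mathsf{SL}(d,\mathbb{R})$; since $\rho$ has finite kernel (Lemma \ref{limitmapprops}), $\Gamma_0$ is virtually nilpotent. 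A virtually nilpotent torsion-free hyperbolic group is cyclic, hence abelian, contradicting the hypothesis that $\Gamma_0$ is non-abelian.

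The hardest part will be the passage from ``every element of $\rho(\Gamma_0)$ has unit-modulus eigenvalues'' to ``$\Gamma_0$ is virtually nilpotent'': this goes through the semisimplification and uses structural results on linear groups lacking loxodromic elements that go beyond the facts collected in Section 2, and carefully reconciling $\rho$ and $\rho^{\mathrm{ss}}$ in the non-semisimple setting is the delicate bookkeeping step.
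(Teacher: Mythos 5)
Your reduction to the statement ``every element of $\rho(\Gamma_0)$ has all eigenvalues of modulus $1$'' is sound in substance, and it is a genuinely different (and arguably cleaner) route than the paper's: the paper instead writes $\rho|_{\Gamma_0}$ in block upper-triangular form relative to $\xi(\partial\Gamma_0)=\{[e_1]\}$, runs a Jordan-normal-form computation, and only establishes unit-modulus eigenvalues on the commutator subgroup of $\Gamma_0$, whereas your two-sided dynamical argument (applied to $\gamma^{n}$ and $\gamma^{-n}$, using that $L$ is a $\rho(\gamma)$-eigenline and that the spanning image of $\xi$ cannot sit inside the proper invariant subspace spanned by the generalized eigenspaces of non-maximal modulus) yields $|\chi(\gamma)|=\lambda_1(\rho(\gamma))=\lambda_d(\rho(\gamma))$ for every $\gamma$, hence all moduli equal to $1$ since $\det=1$. (Your phrase ``unique maximum-modulus eigenvalue'' is not quite right --- all moduli coincide, so nothing is unique --- but the conclusion you need survives.)

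The genuine gap is in your final step. From ``the Zariski closure of $\rho^{\mathrm{ss}}(\Gamma_0)$ has no non-compact almost-simple factor'' (equivalently, by Theorem \ref{finitesubset}, that $\rho^{\mathrm{ss}}(\Gamma_0)$ is a bounded subgroup of $\mathsf{SL}(d,\mathbb R)$) you cannot conclude that a finite-index subgroup of $\Gamma_0$ has nilpotent image, nor that $\Gamma_0$ is virtually nilpotent: non-elementary groups embed in compact groups --- e.g.\ free groups of rank two embed in $\mathsf{SO}(3)$, where every element has all eigenvalues of modulus $1$ --- so precompactness of the image carries no structural information about $\Gamma_0$ by itself. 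The missing ingredient is \emph{discreteness} of the semisimplification. Since $\xi(\partial\Gamma)$ spans, Lemma \ref{limitmapprops} makes $\rho$, hence $\rho|_{\Gamma_0}$, discrete and faithful; the semisimplification is an algebraic limit of conjugates of $\rho|_{\Gamma_0}$, so by \cite[Thm.~8.4]{kapovich-book} (applicable because $\Gamma_0$, being non-abelian, torsion-free and hyperbolic, is not virtually nilpotent) it is again discrete and faithful. A discrete subgroup with bounded singular values is finite, contradicting faithfulness on the infinite group $\Gamma_0$. This is exactly how the paper closes the argument (there, applied to a free subgroup of the commutator subgroup); your proof becomes correct once you replace the ``compact-by-unipotent $\Rightarrow$ virtually nilpotent'' deduction with this discreteness-of-limits argument.
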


\begin{proof}  
Lemma \ref{limitmapprops} implies that  $\rho$ is discrete and faithful. 
Suppose that $\xi$ is constant on $\partial \Gamma_0$. By conjugating, we
may assume $\xi(\partial\Gamma_0)=\{[e_1]\}$.
Then  $\rho|_{\Gamma_0}$ has the form 
$$\rho(\gamma)=\begin{bmatrix}
\varepsilon(\gamma) & u(\gamma) \\ 
0 & |\varepsilon(\gamma)|^{-1/(d-1)}\rho_0(\gamma)
\end{bmatrix}$$ 
for some homomorphism $\varepsilon:\Gamma\to\mathbb R^*$ and some representation 
$\rho_0:\Gamma_0 \rightarrow \mathsf{SL}(d-1, \mathbb{R})$. 
Notice that the representation of $\hat\rho:\Gamma_0\to\mathsf{SL}(d,\mathbb R)$ given by 
$$\hat\rho(\gamma)=\begin{bmatrix} \varepsilon(\gamma) &0 \\ 
 0&  |\varepsilon(\gamma)|^{-1/(d-1)} \rho_0(\gamma)
\end{bmatrix}$$
is the limit of the discrete faithful representations 
$\{Q_n^{-1}\circ \rho|_{\Gamma_0}\circ Q_n\}$, where $Q_n$ is a diagonal matrix with $a_{11}=n$ and all other diagonal entries
equal to $1$, so $\hat\rho$ is discrete and faithful (see Kapovich \cite[Thm. 8.4]{kapovich-book}).

We next show that if $\gamma\in\Gamma_0$ and $\varepsilon(\gamma)=1$, then $\lambda_i(\hat\rho(\gamma))=1$ for all $i$.
If not, consider the Jordan normal formal  for $\rho_0(\gamma)$, regarded as a matrix in $\mathsf{SL}(d-1,\mathbb C)$, i.e.
$$
\rho_0(\gamma)=P\begin{bmatrix}
J_{q_1, k_1} &  & \\ 
 &  \ddots & \\ 
 &  & J_{q_r, k_r}
\end{bmatrix}P^{-1}$$ 
where $P \in \textup{SL}(d-1, \mathbb{C})$ and $J_{q, k}$ is the $k$-dimensional Jordan block with the value $q\in\mathbb C$ 
along the diagonal.
We may assume that $|q_1| \geqslant\cdots\geqslant |q_r|$ and that if $|q_i|=|q_{i+1}|$, then $k_i\ge k_{i+1}$.
Notice that, if $n$ is sufficiently large, the co-efficient of $J_{q,k}^n$ with largest modulus has modulus exactly $\binom{n}{k-1}|q|^{n-k+1}$
It follows that there exists $C>1$ so that 
$$\frac{1}{C}\binom{n}{k_1-1}|q_1|^{n-k_1+1}\le\left \| \rho_0(\gamma^n) \right \|\le C\binom{n}{k_1-1}|q_1|^{n-k_1+1}$$
for all $n \in\mathbb N$. Therefore, $\left \{ \left(\binom{n}{k_1-1}|q_1|^{n-k_1+1} \right)^{-1}\hat\rho(\gamma^n)\right \}$ 
has a subsequence which converges to a non zero matrix $A_{\infty}$. 
One may then show that if $w\in\mathbb R^d$, does not lie in 
the kernel $K$ of $A_\infty$,
then $\{[\rho(\gamma)^n(w)]\}$ does not converge to $[e_1]$. Since $\xi(\partial\Gamma)$ spans $\mathbb R^d$ and $K$ is
a proper subspace of $\mathbb R^d$, there exists
$x\in\partial\Gamma-\partial\Gamma_0$ so that $\xi(x)$ does not lie in $K$. Since $\xi$ is $\rho$-equivariant,
$\{\rho(\gamma)^n(\xi(x))\}$ must converge to $\xi(\gamma^+)=[e_1]$, which is a contradiction.

Notice that if $N$ is the commutator subgroup of $\Gamma_0$, then   $\varepsilon(N)=\{1\}$. Since $\Gamma_0$ is a non-abelian
torsion-free hyperbolic group, $N$ contains a free subgroup $\Delta$ of rank 2.
Let $\psi=\hat\rho|_{\Delta}^{ss}$ be a semisimplification of $\hat\rho|_{\Delta}$. 
Since $\psi$ is a limit of conjugates of $\hat\rho|_\Delta$ and
$\hat\rho|_\Delta$ is discrete and faithful, $\psi$ is also discrete and faithful \cite[Thm 8.4]{kapovich-book}.
Since $\log\lambda_i(\psi(\gamma))=\log\lambda_i(\hat\rho(\gamma))=0$ for all $\gamma\in\Delta$ and all $i$,
Theorem \ref{finitesubset}  guarantees that there exists $M$ so that 
$||\log\sigma_i(\psi(\gamma)||\le M$ for all $\gamma\in \Delta$ and all $i$.
Therefore, $\psi(\Delta)$ is bounded which contradicts the fact that $\psi$ is discrete and faithful and that $\Delta$
is infinite.
\end{proof}

The work of Louder-Touikan \cite{louder-touikan} allows us to find cohomologically large quasiconvex subgroups
which do not split.

\begin{proposition}
If $\Gamma$ is a torsion-free hyperbolic group of cohomological dimension $m\ge 3$ which splits over
a cyclic subgroup, then $\Gamma$ contains an infinite index, quasiconvex subgroup  of cohomological
dimension $m$ which does not split over a cyclic subgroup.
\end{proposition}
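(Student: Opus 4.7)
The plan is to apply Bowditch's cyclic JSJ decomposition to $\Gamma$, identify a vertex group of cohomological dimension $m$, iterate this procedure on that vertex group, and invoke the strong accessibility theorem of Louder-Touikan to conclude that the iteration terminates in a non-splittable quasiconvex subgroup of the desired cohomological dimension.

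First, since $\Gamma$ is torsion-free hyperbolic and splits over a cyclic subgroup, Bowditch's JSJ theory provides a finite graph of groups decomposition with infinite cyclic edge groups and finitely generated quasiconvex vertex groups. Because cyclic subgroups have cohomological dimension at most $1$, the Mayer-Vietoris long exact sequence for graphs of groups gives
$$\operatorname{cd}(\Gamma)\le\max\Bigl\{\max_{v}\operatorname{cd}(G_{v}),\ \max_{e}\operatorname{cd}(G_{e})+1\Bigr\}\le\max\Bigl\{\max_{v}\operatorname{cd}(G_{v}),\ 2\Bigr\}.$$
Since $\operatorname{cd}(\Gamma)=m\ge 3$ and $\operatorname{cd}(G_{v})\le\operatorname{cd}(\Gamma)$ for every vertex subgroup, there is a vertex group $H_{1}$ with $\operatorname{cd}(H_{1})=m$; in particular $H_{1}$ is neither cyclic nor a surface group. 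Since $\Gamma$ is non-elementary, any vertex group in a non-trivial cyclic splitting has infinite index in $\Gamma$: were $H_{1}$ of finite index, the induced action on the Bass-Serre tree would force $\Gamma$ to be virtually cyclic, contradicting $m\ge 3$.

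If $H_{1}$ does not split over any cyclic subgroup, we are done. Otherwise, apply the same JSJ procedure to $H_{1}$, which is itself a torsion-free hyperbolic group of cohomological dimension $m$, to obtain a quasiconvex vertex subgroup $H_{2}\subset H_{1}$ with $\operatorname{cd}(H_{2})=m$ and of infinite index in $H_{1}$. Quasiconvexity is transitive, so $H_{2}$ is quasiconvex in $\Gamma$ of infinite index. Iterating produces a descending chain $H_{1}\supset H_{2}\supset\cdots$ of quasiconvex, infinite-index subgroups of $\Gamma$, each of cohomological dimension $m$, each admitting a non-trivial cyclic splitting.

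The strong accessibility theorem of Louder-Touikan \cite{louder-touikan} asserts that a hierarchical sequence of cyclic JSJ decompositions of a finitely presented group must terminate after finitely many steps. The terminal subgroup $H_{n}$ is then a quasiconvex, infinite-index subgroup of $\Gamma$ of cohomological dimension $m$ that admits no non-trivial cyclic splitting, as required. The main obstacle is invoking Louder-Touikan in the correct form: one must confirm that their strong accessibility statement applies to cyclic hierarchies over torsion-free hyperbolic groups, and verify that iterated JSJ vertex subgroups remain torsion-free hyperbolic (which follows from their quasiconvexity in a torsion-free hyperbolic ambient group) so that Bowditch's JSJ machinery can be reapplied at each stage.
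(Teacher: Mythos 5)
Your proposal is correct and follows essentially the same route as the paper: take a maximal cyclic splitting, use the Mayer--Vietoris bound (Bieri/Swan) to find a vertex group of cohomological dimension $m$, note it is quasiconvex by Bowditch, iterate, and invoke Louder--Touikan's strong accessibility to terminate. The only wobble is your justification of infinite index (the correct reason is the standard normal-form fact that vertex groups of a nontrivial splitting over a proper cyclic edge group have infinite index, not that $\Gamma$ would be virtually cyclic), but this is a minor repair and the fact is left implicit in the paper as well.
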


\begin{proof}
One first considers a maximal splitting of $\Gamma$ along cyclic subgroups. 
One of the factors, say $\Delta$ has cohomological dimension $m$ (see Bieri  \cite[Cor. 4.1]{bieri-hnn} and Swan \cite[Thm. 2.3]{swan}).
A result of Bowditch \cite[Prop. 1.2]{bowditch-cutpoints}, implies that $\Delta$ is a quasiconvex subgroup of $\Gamma$.
If $\Delta$ itself splits along a cyclic subgroup, we consider a maximal splitting of $\Delta$ along cyclic subgroups.
We then again find a factor $\Delta_1$ of this decomposition which has  cohomological dimension $m$ and is quasiconvex
in $\Delta$, hence in $\Gamma$.
Louder and Touikan \cite[Cor. 2.7]{louder-touikan}
implies that this process terminates after finitely many steps, so
one obtains the desired quasiconvex subgroup of cohomological dimension $m$.
\end{proof}

We now combine the above results to establish Theorem \ref{benoistlimitmap}.

\medskip\noindent
{\bf Theorem \ref{benoistlimitmap}:} {\em
If $d\ge 4$ and $\Gamma$ is a torsion-free hyperbolic group,  a representation 
\hbox{$\rho:\Gamma \rightarrow \mathsf{SL}(d, \mathbb{R})$}  is a Benoist representation
if and only if $\Gamma$ has cohomological dimension $d-1$
and there is a non-constant $\rho$-equivariant continuous map $\xi:\partial \Gamma \rightarrow \mathbb{P}(\mathbb R^d)$.
}

\begin{proof}
If $\rho$ is a Benoist representation then $\Gamma$ has cohomological dimension $d-1$ and $\xi_\rho^1$ is a continuous,
non-constant $\rho$-equivariant map. 

Now suppose that $\Gamma$ has cohomological dimension $d-1$
and there is a non-constant \hbox{$\rho$-equivariant} map $\xi:\partial \Gamma \rightarrow \mathbb{P}(\mathbb R^d)$.
If $\Gamma$ does not split over a cyclic subgroup, then Theorem \ref{benoistlimitmap2} implies that $\rho$ is a Benoist
representation. If $\Gamma$ does split over a cyclic group, let $\Gamma_1$ be an infinite index, quasiconvex subgroup of $\Gamma$
of cohomological dimension $d-1$ which does not split over a cyclic subgroup.  

We next observe that $\xi(\partial\Gamma)$ must span $\mathbb R^d$. If it doesn't, let
$W$ be the subspace spanned by $\xi(\partial\Gamma)$. We obtain a representation
$\hat\rho:\Gamma\to \mathsf{SL}^\pm(W)$, given by $\pi_W\circ\rho|_W$ and a continuous $\hat\rho$-equivariant map $\hat\xi:\partial\Gamma\to\mathbb P(W)$,
which is simply $\xi$ with the range regarded as $\mathbb P(W)$,
so that $\hat\xi(\partial\Gamma)$ spans $W$. There exists a subgroup $\Gamma_2$ of index at most two in $\Gamma_1$,
so that $\hat\rho(\Gamma_2)$ lies in $\mathsf{SL}(W)$. Notice that $\Gamma_2$ also has cohomological dimension $d-1$
and does not split over a cyclic subgroup.
By Proposition \ref{factornonconstant},  $\hat\xi|_{\partial\Gamma_2}$ is non-constant, 
so Propositions  \ref{irredcase} and \ref{redcase} imply that $\hat\rho|_{\Gamma_2}$ is a Benoist representation
and that $W$ has dimension $d$, which is a contradiction.

Since $\xi(\partial\Gamma)$ spans $\mathbb R^d$,
Proposition \ref{factornonconstant} implies that $\xi|_{\partial\Gamma_1}$ is non-constant,
so Theorem \ref{benoistlimitmap2} implies that $\rho_1=\rho|_{\Gamma_1}$ is a Benoist representation.
Therefore, $\rho(\Gamma_1)$ acts properly discontinuously and cocompactly on
$$\Omega=\mathbb P\left(\mathbb R^d-\bigcup_{x\in\partial\Gamma_1}\xi_{\rho_1}^{d-1}(x)\right)$$
where  $\xi_{\rho_1}^{d-1}$ is the limit map for $\rho_1$. 
Moreover, $\xi(\partial\Gamma_1)=\partial\Omega$.

Suppose that $\alpha\in\Gamma-\Gamma_1$ and $\rho(\alpha)$ is biproximal. Let
$V(\alpha)$ be the repelling hyperplane of $\rho(\alpha)$. 
Since $\xi$ is equivariant, if $x\in\partial\Omega$, then
$\{\rho(\alpha^n)(x)\}$ converges to $\xi(\alpha^+)$. Therefore,
$V(\alpha)$ is disjoint from $\partial\Omega$.
It follows that $\mathbb P(\mathbb R^d)-\Omega$ is the closure of the set of repelling hyperplanes of biproximal elements of
$\rho(\Gamma)$. Therefore, the complement of  $\Omega$, and hence $\Omega$ itself,  is invariant under the
full group $\rho(\Gamma)$. Since $\rho(\Gamma)$ is discrete, by Lemma \ref{limitmapprops}, and 
$\rho(\Gamma_1)$ acts cocompactly on $\Omega$,
$\rho(\Gamma_1)$ must have finite index in $\rho(\Gamma)$ which contradicts the fact that $\rho$ is faithful.
\end{proof}

\medskip\noindent
{\bf Remarks:} (1) In the 3-dimensional case, one may show that if $\Gamma$ is a torsion-free hyperbolic group
and $\rho:\Gamma\to \mathsf{SL}(3,\mathbb R)$ admits a non-constant continuous $\rho$-equivariant map
\hbox{$\xi:\partial\Gamma\to \mathbb P(\mathbb R^3)$}, then $\Gamma$ is a surface group or a free group.
If the space $W$ spanned by $\xi(\partial\Gamma)$ is $2$-dimensional, then it follows from Lemma \ref{limitmapprops}
that $\rho|_W:\Gamma\to\mathsf{SL}(W)$ is discrete and faithful, so $\Gamma$ is a surface group or a free group.
Thus, we may assume that $\xi(\partial\Gamma)$ spans $\mathbb R^3$, so, again by Lemma \ref{limitmapprops},
$\rho$ is discrete and faithful and $\rho(\Gamma)$ contains a biproximal element.  

Corollary B of Wilton \cite{wilton} gives that if $\Gamma$ is not free or a surface group, 
then $\Gamma$ contains either an infinite index quasiconvex surface subgroup 
or a quasiconvex group which does not split over a cyclic subgroup. If $\Gamma$ contains a quasiconvex subgroup $\Delta$ which
does not split over a cyclic subgroup, then Propositions \ref{irredcase} and \ref{redcase} imply that $\rho|_{\Delta}$ is a Benoist
representation, and thus, by Theorem \ref{sl3rcase}, $\Delta$ is a surface group, which is a contradiction. If $\Gamma$ contains
a quasiconvex surface subgroup $\Gamma_0$ of infinite index, then, by Lemma \ref{factornonconstant},
$\xi|_{\partial\Gamma_0}$ is non-constant. There exists a biproximal element $\rho(\alpha)\in\rho(\Gamma)-\rho(\Gamma_0)$,
and, since $\xi$ is $\rho$-equivariant, $\xi(\partial\Gamma_0)$ cannot intersect $\mathbb P(V(\alpha))$
where $V(\alpha)$ is the repelling hyperplane of $\rho(\alpha)$,
which implies that $\rho(\partial\Gamma_0)$ lies in an affine chart. If the span $W_0$ of $\xi(\partial\Gamma_0)$ is a proper subspace
of $\mathbb R^3$, then, since $(\rho|_{\Gamma_0})|_{W_0}:\Gamma_0\to\mathsf{SL}(W_0)$ is discrete and faithful, 
$\xi(\partial\Gamma_0)=\mathbb P(W_0)$ intersects $\mathbb P(V(\alpha))$, which is a contradiction.  We then
argue, just as in the proof of Proposition \ref{irredcase}, that $\rho|_{\Gamma_0}$ is a Benoist representation. We further argue,
as in the proof of Theorem \ref{benoistlimitmap}, that this is impossible if $\Gamma_0$ has infinite index in $\Gamma$.

(2) One may use similar techniques to show that in three of the four exceptional cases in Theorem \ref{cohomdimd-k}
one does not even have a non-constant limit map into $\mathbb P(\mathbb R^d)$. More precisely, if 
$k$ is $2$, $4$ or $8$, $\Gamma$ is torsion-free hyperbolic group, $\partial\Gamma\cong S^{k}$ and
$\rho:\Gamma \rightarrow \textup{SL}(2k, \mathbb{R})$ is $P_k$-Anosov, then
there does not exist a non-constant, continuous $\rho$-equivariant map
$\xi:\partial\Gamma\to \mathbb P(\mathbb R^{2k})$. 

Suppose that  $\xi:\partial\Gamma\to \mathbb P(\mathbb R^{2k})$ is a non-constant, continuous $\rho$-equivariant map.
If $\rho$ is not irreducible, then let $W$ be a proper $\rho(\Gamma)$-invariant subspace of $\mathbb R^d$.
One may show that the dimension $W\cap \xi^k(x)$ is constant, say $r$, over $\partial \Gamma$.
Let $p=p_{\rho,k}:E \to \partial \Gamma$ be the fibre bundle given by Lemma \ref{fibrebundle}. Recall, from the proof of
Theorem \ref{cohomdimd-k}, that in these exceptional cases $E=S(\mathbb R^{2k})$.
The restriction \hbox{$q=p|_{S(W)}:S(W)\to\partial\Gamma$} is then a fibre bundle with base space $S^{k}$, fibres homeomorphic
to $S^{r-1}$ and total space $S(W)$. However, since ${\rm dim}(W)<2k$, this is impossible, by the classification
of sphere fibrations (\cite{adams}). So, we may assume that $\rho$ is irreducible.
Lemma \ref{limitmapprops} then implies that  $\rho$ is $P_1$-divergent and that 
there exists a biproximal element $\rho(\gamma)\in\rho(\Gamma)$ and $\xi(\gamma^+)$ is the
attracting eigenline of $\rho(\gamma)$. Therefore, since $\rho$ is \hbox{$P_k$-Anosov,}
\hbox{$\xi(\gamma^+)\subset \xi_\rho^k(\gamma^+)$}.
Since $\xi$ and $\xi_\rho^k$ are both $\rho$-equivariant and $\Gamma$ acts minimally on $\partial\Gamma$, we see that
$\xi(x)\subset\xi_\rho^k(x)$ for all $x\in\partial\Gamma$. Therefore, $\xi$ lifts to a section of the spherical fibration $p$, which
we have already seen is impossible.

\section{Examples and Questions}
\label{examples}

In this section, we collect examples related to our results and discuss questions that arise.

It is natural to ask when the cohomological dimension bounds provided by Theorem \ref{cohomdimd-k} are sharp.
Cocompact lattices in $\mathsf{SO}(d,1)\subset \mathsf{SL}(d+1,\mathbb R)$ have cohomological dimension $d$
and the inclusion map is a projective Anosov, so our results are sharp whenever $k=1$.
Cocompact lattices in $\mathsf{SU}(d,1)\subset \mathsf{SL}(2d+2,\mathbb R)$ have cohomological dimension $2d$ and
are $P_2$-Anosov, so our results are sharp when  $k=2$ and $d>4$ is even. 
Similarly, cocompact lattices in $\mathsf{Sp}(n,1)\subset \mathsf{SL}(4n+4,\mathbb R)$
demonstrate sharpness when $k=4$ and $d=4n+4>8$

We also note that all the exceptional cases in part (2) of Theorem \ref{cohomdimd-k} occur. 
If \hbox{$\rho:\Gamma\to \mathsf{SL}(2,\mathbb R)$} is Fuchsian and $\Gamma$ is a surface group,
then $\Gamma$ has cohomological dimension 2 and $\rho$ is projective Anosov. If $\Gamma$ is a cocompact lattice in
$\mathsf{SL}(2,\mathbb C)\subset\mathsf{SL}(4,\mathbb R)$, then $\Gamma$ has cohomological dimension 3 and the inclusion
map is $P_2$-Anosov. Similarly, cocompact lattices in $\mathsf{SL}(2,\mathcal{Q})\subset \mathsf{SL}(8,\mathbb R)$ and
$\mathsf{SL}(2,\mathcal{O})\subset \mathsf{SL}(16,\mathbb R)$ have cohomological dimension $5$ and $9$ and are $P_4$-Anosov
and $P_8$-Anosov, respectively, where $\mathcal{Q}$ is the quaternions and $\mathcal{O}$ is the octonions. 
(Notice that $\mathsf{PSL}(2,\mathcal{Q})$ may be identified with $\mathsf{SO}_0(5,1)$, in such a way that $\partial \mathbb H^5\cong S^4$
is identified with $\mathcal{Q}\mathbb P^1$, hence if $\Gamma$ is a cocompact lattice in $\mathsf{SL}(2,\mathcal{Q})$, then $\Gamma$ is
hyperbolic, $\partial\Gamma\cong S^4$ and there is an equivariant homeomorphism from $\partial\Gamma$ to
$\mathcal{Q}\mathbb P^1\subset {\rm Gr}_4(\mathbb R^8)$. Similarly, $\mathsf{PSL}(2,\mathcal{O})$ is identified with
$\mathsf{SO}_0(9,1)$ and one may make a similar analysis. See Baez \cite{baez} for more details.)

\medskip\noindent
{\bf Question 1:}
{\em For what values of $d$ and $k$ are the estimates in Theorem \ref{cohomdimd-k} sharp?}

\medskip

If $\Gamma$ is a convex cocompact subgroup of $\mathsf{PSL}(2,\mathbb{C})\cong \mathsf{SO}_0(3,1)$, the inclusion map lifts
to a representation $\rho:\Gamma\to \mathsf{SL}(2,\mathbb C)\subset\mathsf{SL}(4,\mathbb R)$. In light
of Theorem \ref{sl4rcase} it is natural to ask:

\medskip\noindent
{\bf Question 2:}
{\em If $\Gamma$ is a torsion-free hyperbolic group and $\rho:\Gamma\to \mathsf{SL}(4,\mathbb R)$ is $P_2$-Anosov,
must $\Gamma$ be isomorphic to a convex cocompact subgroup of $\mathsf{PO}(3,1)$?}

\medskip

We know of no examples of Borel Anosov representations of surface groups in even dimensions which are not Hitchin.
Proposition \ref{Borel4irreducible} assures us that every Borel Anosov representation of a surface group into $\mathsf{SL}(4,\mathbb R)$
is irreducible. Together, they suggest the following ambitious question.

\medskip\noindent
{\bf Question 3:} {\em Is every Borel Anosov representation of a surface group into $\mathsf{SL}(4,\mathbb R)$ Hitchin?}

\medskip

Notice that Danciger and Zhang \cite[Thm. 1.3]{danciger-zhang} proved that Hitchin representations into $\mathsf{SO}(n,n)$
are not $P_n$-Anosov, if you regard them as representations into $\mathsf{SL}(2n,\mathbb R)$.

\medskip

We characterize Borel Anosov subgroups in dimensions 3 and 4. We note that it is easy to show that
a cocompact lattice $\Gamma$ in $\mathsf{Sp}(n,1)$ does not admit a Borel Anosov representation 
into $\mathsf{SL}(d,\mathbb R)$ for any $d$.
Suppose that $\rho: \Gamma \rightarrow \mathsf{SL}(d, \mathbb{R})$ is Borel Anosov. By Corlette \cite{corlette} and
Gromov-Schoen's \cite{gromov-schoen}  superrigidity theorem, see also \cite{fisher-hitchman}, there exists 
$\rho_1:\mathsf{Sp}(n,1) \rightarrow \mathsf{SL}(d, \mathbb{R})$ and $\rho_2: \Gamma \rightarrow \mathsf{SL}(d, \mathbb{R})$ 
with compact closure so that $\rho=(\rho_1|_{\Gamma})\rho_2$ and $\rho_{1}|_\Gamma$ and  $\rho_2$ commute. 
Since $\rho$ does not have compact closure, the representation $\rho_1$ has discrete kernel (in fact central).
Let $\gamma\in\Gamma$ have infinite order. Then, the centralizer $Z$ of $\gamma$ in $\mathsf{Sp}(n,1)$ is non-abelian,
which implies that the centralizer of $\rho(\gamma)$ in $\mathsf{SL}(d,\mathbb R)$ contains $\rho_1(Z)$ and is hence
non-abelian. However, $\rho(\gamma)$ is diagonalizable with distinct eigenvalues, hence has abelian centralizer,
so we have arrived at a contradiction.

\medskip\noindent
{\bf Question 4:} {\em What other classes of hyperbolic groups can be shown not to admit Borel Anosov representations in any dimension?}

\medskip

It is expected that not all linear hyperbolic group admit linear Anosov representations, but we know of
no explicit examples. See also the discussion in Kassel \cite[Section 8]{kassel}.

\medskip\noindent
{\bf Question 5:} {\em Can one exhibit explicit examples of linear hyperbolic groups which do not admit
Anosov representations into $\mathsf{SL}(d,\mathbb R)$ for any $d$?}

\medskip

One can exhibit a sequence $\{\Gamma_k\}$ of hyperbolic groups so that each $\Gamma_k$  has virtual
cohomological dimension 2, admits a faithful representation into $\mathsf{SL}(2k,\mathbb R)$, but admits
no projective Anosov representation into $\mathsf{SL}(2k+1,\mathbb R)$.
Let $\Gamma_1=\pi_1(S)*\mathbb Z$. In general, we define $\Gamma_{k}=(\Gamma_{k-1}\oplus \mathbb{Z}_3)\ast \mathbb{Z}$
and notice that $\Gamma_k$ can be realized as a subgroup of  
$\hat\Gamma_k=(\Gamma_{k-1} \ast \mathbb{Z})\oplus (\mathbb{Z}_3 \ast \mathbb{Z})$.
It is not difficult to check that there is a faithful representation $\rho_1:\Gamma_1\to\mathsf{SL}(2,\mathbb R)$.
Theorem \ref{sl3rcase} implies that $\Gamma_1$ does not admit a projective Anosov representation
into $\mathsf{SL}(3,\mathbb R)$.  Since $\Gamma_{k-1}$ contains a subgroup isomorphic to
$\Gamma_{k-1}*\mathbb Z$, there exists a faithful representation
\hbox{$\hat\rho_k:\Gamma_{k-1}*\mathbb Z\to\mathsf{SL}(2k-2,\mathbb R)$}. If $\sigma:\mathbb Z_3 *\mathbb Z\to \mathsf{SL}(2,\mathbb R)$ is a faithful representation
and $\pi_i$ is the projection of $\hat\Gamma_k$ onto the $i^{\rm th}$ summand, then
$(\rho_{k-1}\circ\pi_1)\oplus (\sigma\circ\pi_2)$ is a faithful representation of $\hat\Gamma_k$ into $\mathsf{SL}(2k,\mathbb R)$, 
which restricts to a faithful representation $\rho_k:\Gamma_k\to\mathsf{SL}(2k,\mathbb R)$.
Suppose $\rho:\Gamma_k \rightarrow \textup{SL}(2k+1, \mathbb{R})$ is projective Anosov. 
Let $c$ be the generator of $\mathbb Z_3$ in the first factor of $\Gamma_k$.  
Since the element $c$ fixes $\partial \Gamma_{k-1}$ pointwise, $V=\langle \xi_\rho^1(\partial \Gamma_{k-1}) \rangle$ is contained 
in the kernel of $\rho(c)-I$, which has dimension at most $2k-2$.  
(Notice that $\rho(c)\ne I$, since $\rho$ has finite kernel, and $c$ is not contained in a  finite normal subgroup of $\Gamma_k$.)
However, the restriction $\rho|_{V}: \Gamma_{k-1} \rightarrow \textup{GL}(V)$ would then be projective Anosov, 
which is impossible by our inductive assumption.

\medskip

It is a consequence of the Geometrization Theorem that any hyperbolic group which admits a discrete faithful
representation into $\mathsf{PO}(3,1)$ also admits a convex cocompact representation into $\mathsf{PO}(3,1)$. 
One might ask by extension:

\medskip\noindent
{\bf Question 6:} {\em Are there hyperbolic groups which admit discrete faithful linear representations, but do not
admit Anosov representations?}


\begin{thebibliography}{100}
{\footnotesize

\bibitem{AMS} H. Abels, G. Margulis, G. Soifer, ``Semigroups containing proximal linear maps,''
{\em Israel J. Math.} {\bf 91}(1995), 1--30.

\bibitem{abikoff-maskit} W. Abikoff and B. Maskit, ``Geometric decompositions of Kleinian groups,''
{\em Amer. J. Math.} {\bf 99}(1977), 687--697. 

\bibitem{adams} J.F. Adams, ``On the non-existence of elements of Hopf invariant one,''
{\em Ann. of Math.} {\bf 72}(1960), 20--104. 

\bibitem{baez} J. Baez, ``The octonions,'' {\em Bull. A.M.S.} {\bf 39}(2002), 145--205.

\bibitem{barbot} T. Barbot, ``Three-dimensional Anosov flag manifolds,''
{\em Geom. Top.} {\bf 14}(2010), 153--191. 

\bibitem{benoist-limitcone} Y. Benoist,  ``Propri\'et\'es asymptotiques des groupes lin\'eaires,'' 
{\em Geom.  and Funct. Anal.} {\bf 7}(1997), 1--47.

\bibitem{benoist-divisible1} Y. Benoist, ``Convexes divisibles I,'' in {\em Algebraic groups and arithmetic},
Tata Inst. Fund. Res. Stud. Math. {\bf 17}(2004), 339--374.

\bibitem{benoist-divisible2} Y. Benoist, `` Convexes divisibles II,''
{\em Duke Math. J.} {\bf 120}(2003), 97--120. 

\bibitem{benoist-divisible3}Y. Benoist, ``Convexes divisibles III,''
{\em Ann. Sci. de l'ENS} {\bf 38}(2005), 793--832.

\bibitem{benoist-examples} Y. Benoist, ``Convexes hyperboliques et quasiisom\'etries,'' {\em Geom. Ded.} {\bf 122}(2006), 109--134. 

\bibitem{benoist-survey} Y. Benoist, ``A survey on divisible convex sets,'' in {\em Geometry, analysis and topology of discrete groups},
International Press, 2008, 1--18.


\bibitem{bestvina-mess} M. Bestvina and G. Mess, ``The boundary of negatively curved groups,''
{\em J. Amer. Math. Soc.} {\bf  4}(1991), 469--481. 

\bibitem{bieri-hnn} R. Bieri, ``Mayer-Vietoris Sequences for HNN-Groups
and Homological Duality,'' {\em Math. Z.} {\bf 143}(1975), 123--130.

\bibitem{BPS} J. Bochi, R. Potrie and A. Sambarino, ``Anosov representations and dominated splittings,''
{\em J. Eur. Math. Soc.}, to appear, arXiv:1605.01742.

\bibitem{bowditch-cutpoints} B. Bowditch, ``Cut points and canonical splittings of hyperbolic groups,''
{\em Acta Math.} {\bf 180}(1998), 145--186. 

\bibitem{bredon-wood} G. E. Bredon and J. W. Wood, ``Non-orientable surfaces in orientable 3-manifolds,''
{\em Invent. Math.} {\bf 7}(1969), 83--110.

\bibitem{BCLS} M. Bridgeman, R. Canary, F. Labourie and A. Sambarino, ``The pressure metric for 
Anosov representations,'' {\em Geom. and Funct. Anal.} {\bf 25}(2015), 1089--1179.

\bibitem{CLS} R. Canary, M. Lee, A. Sambarino and M. Stover, ``Amalgam Anosov
representations,'' {\em Geom. Top.} {\bf 21}(2017), 215--251.

\bibitem{corlette} K. Corlette,  ``Archimedean superrigidity and hyperbolic geometry,''
{\em Ann. of Math.}  \textbf{135}(1992), 165--182. 

\bibitem{DGK0} J. Danciger, F. Gu\'eritaud, and F. Kassel, ``Convex cocompactness in pseudo-Riemannian hyperbolic spaces,''
{\em Geom. Ded.} {\bf 192}(2018), 87--126. 

\bibitem{DGK} J. Danciger, F. Gu\'eritaud, and F. Kassel, ``Convex cocompact actions in real projective geometry,''
preprint, arXiv:1704.08711.

\bibitem{DGK2} J. Danciger, F. Gu\'eritaud, F. Kassel,  G.S. Lee and L. Marquis, ``Convex cocompactness for Coxeter groups,''
in preparation.

\bibitem{danciger-zhang} J. Danciger and T. Zhang, ``Affine actions with linear Hitchin part,'' preprint, arXiv:1812.03930.

\bibitem{fisher-hitchman} D. Fisher and T. J. Hitchman, ``Strengthening Kazhdan's Property (T) by Bochner methods,''
 \textit{Geom. Ded.} \textbf{160} (2012), 333--364.

\bibitem{gromov-schoen} M. Gromov and R. Schoen, ``Harmonic maps into singular spaces and p-adic superrigidity 
for lattices in groups of rank one,''
\textit{Inst. Hautes {\'E}tudes Sci. Publ. Math.} \textbf{76}(1992), 165--246.


\bibitem{GGKW} F. Gu\'eritaud, O. Guichard, F. Kassel and A. Wienhard, ``Anosov representations and proper actions,''
{\em Geom. Top.} {\bf 21}(2017), 485--584.

\bibitem{guichard-hitchin} O. Guichard, ``Composantes de Hitchin et repr\'esentations hyperconvexes de groupes de surface,''
{\em J. Diff. Geom.} {\bf 80}(2008), 391--431.

\bibitem{guichard-wienhard} O. Guichard and A. Wienhard, ``Anosov representations: Domains of discontinuity and
applications,'' {\em Invent. Math.} {\bf 190}(2012), 357--438.

\bibitem{hitchin} N. Hitchin, ``Lie groups and Teichm\"uller space,''
{\em Topology} {\bf 31}(1992), 449--473. 

\bibitem{hurewicz-dimension} W. Hurewicz, ``Sur la dimension des produits cartesiens,'' {\em Ann. of Math.} {\bf 36}(1935), 194--197.

\bibitem{hurewicz} W. Hurewicz and  H. Wallman, {\em Dimension Theory},  Princeton University Press, 1941.

\bibitem{johnson-millson} D. Johnson and J. Millson, ``Deformation spaces associated to compact 
hyperbolic manifolds,'' in {\em Discrete groups in geometry and analysis}, 
Progr. Math. vol. 67 (1987), Birkh\"auser 48--106.

\bibitem{benakli-kapovich} I. Kapovich and N. Benakli, ``Boundaries of hyperbolic groups,''
in {\em Combinatorial and geometric group theory}  Contemp. Math. vol. 296 (2002), Amer. Math. Soc., 39--93.

\bibitem{kapovich-book}  M. Kapovich, \emph{Hyperbolic manifolds and discrete groups},
Progr. Math. {\bf 183},  Birkh{\"a}user, 2001.

\bibitem{kapovich} M. Kapovich, ``Convex projective structures on Gromov-Thurston manifolds,''
{\em Geom. Top.} {\bf 11}(2007), 1777--1830. 

\bibitem{KLP1} M. Kapovich, B. Leeb and J. Porti, ``Anosov subgroups: Dynamical and geometric characterizations,''
{\em Eur. Math. J.} {\bf 3}(2017), 808--898.

\bibitem{KLP2} M. Kapovich, B. Leeb and J. Porti,  ``A Morse Lemma for quasigeodesics in symmetric spaces and 
Euclidean buildings,'' {\em Geom. Top} {\bf 22}(2018), 3827--3923. 

\bibitem{kassel} F. Kassel, ``Geometric structures and representations of discrete groups,''
preprint, arXiv:1802.07221. 

\bibitem{kassel-potrie} F. Kassel and R. Potrie, ``Eigenvalue gaps for hyperbolic groups and semigroups,'' in preparation.

\bibitem{labourie-invent} F. Labourie, ``Anosov flows, surface groups and curves in projective space,''
{\em Invent. Math.} {\bf 165}(2006), 51--114.

\bibitem{lee-marquis} G.S. Lee and L. Marquis, ``Anti-de Sitter strictly GHC-regular groups which are not lattices,''
{\em Transactions of the A.M.S.}, to appear, arXiv:1708.02101.

\bibitem{louder-touikan} L. Louder and N. Touikan, ``Strong accessibility for finitely presented group,''
{\em Geom. Top.}  {\bf 21}(2017), 1805--1835.

\bibitem{morgan} J. Morgan, ``On Thurston's uniformization theorem for three-dimensional manifolds'' in 
{\em The Smith Conjecture}, Pure Appl. Math. {\bf 112}, Academic Press, 1984, 37--125.

\bibitem{morgan-tian} J. Morgan and G. Tian, {\em The Geometrization Conjecture}, American Mathematical
Society, 2014.

\bibitem{patterson} S.J. Patterson, ``Lectures on measures on limit sets of Kleinian groups,''
in {\em Analytical and geometric aspects of hyperbolic space}, Cambridge University Press, 1987,
281--323.

\bibitem{PSW} M. Pozzetti, A. Sambarino and A. Wienhard, ``Conformality for a robust class of non-conformal attractors,''
preprint, arXiv:1902.01303.

\bibitem{scott} P. Scott, ``Compact submanifolds of 3-manifolds,'' {\em J. London Math. Soc.} {\bf 7}(1974), 246--250.

\bibitem{swan} R. Swan, ``Groups of cohomological dimension one,'' {\em J. Alg.} {\bf 12}(1969), 585--610. 

\bibitem{wilton} H. Wilton, ``Essential surfaces in graph pairs,'' {\em Journal of the A.M.S.} {\bf 31}(2018), 893--919.

\bibitem{zhang-zimmer} T. Zhang and A. Zimmer, ``Regularity of limit sets of Anosov representations,'' preprint,
arXiv:1903.11021.

\bibitem{zimmer} A. Zimmer, ``Projective Anosov representations, convex cocompact actions, and rigidity,''
preprint, arXiv:1704.08582.

}
\end{thebibliography}
\end{document}